\numberwithin{equation}{section}
\newtheorem{thm}{Theorem}[section]
\newtheorem{lm}[thm]{Lemma}
\newtheorem{oss}[thm]{Remark}
\newtheorem{cor}[thm]{Corollary}
\begin{document}

\date{\today}

\title[On a quasilinear mean field equation with an exponential nonlinearity]{On a quasilinear mean field equation with an exponential nonlinearity}

\author{Pierpaolo Esposito}
\address{Pierpaolo Esposito, Dipartimento di Matematica e Fisica, Universit\`a degli Studi Roma Tre',\,Largo S. Leonardo Murialdo 1, 00146 Roma, Italy}
\email{esposito@mat.uniroma3.it}

\author{Fabrizio Morlando}
\address{Fabrizio Morlando, Dipartimento di Matematica e Fisica, Universit\`a degli Studi Roma Tre,\,Largo S. Leonardo Murialdo 1, 00146 Roma, Italy}
\email{morlando@mat.uniroma3.it}
\thanks{The first author was partially supported by Gruppo Nazionale per l'Analisi Matematica, la Probabilit\'a  e le loro Applicazioni (GNAMPA) of the Istituto Nazionale di Alta Matematica (INdAM), by the Prin project ``Critical Point Theory and Perturbative Methods for Nonlinear Differential Equations" and the Firb-Ideas project ``Analysis and Beyond".}

\begin{abstract} The mean field equation involving the $N-$Laplace operator and an exponential nonlinearity is considered in dimension $N\geq 2$ on bounded domains with homogenoeus Dirichlet boundary condition. By a detailed asymptotic analysis we derive a quantization property in the non-compact case, yielding to the compactness of the solutions set in the so-called non-resonant regime. In such a regime, an existence result is then provided by a variational approach.
\end{abstract}

\maketitle

\section{Introduction}\label{Sec1}
\noindent We are concerned with the following quasilinear mean field equation
\begin{equation}\label{E1}
\left\{ \begin{array}{ll}
          -\Delta_{N} u=\lambda \frac{V e^{u}}{\int_{\Omega}V e^{u}dx}& \mbox{in}\ \Omega\\
          u=0& \mbox{on}\ \partial\Omega
        \end{array} \right.
\end{equation}
on a smooth bounded domain $\Omega\subset \mathbb{R}^{N}$, $N \geq 2$, where $\Delta_N u=\hbox{div}\,(|\nabla u|^{N-2}\nabla u)$ denotes the $N-$Laplace operator, $V$ is a smooth nonnegative function and $\lambda \in \mathbb{R}$. In the sequel, \eqref{E1} will be referred to as the $N$-mean field equation.

\medskip \noindent In terms of $\lambda$ or $\rho=\frac{\lambda}{\int Ve^u}$, the planar case $N=2$ on Euclidean domains or on closed Riemannian surfaces has strongly attracted the mathematical interest, as it arises in conformal geometry \cite{CGY,CY,KW}, in statistical mechanics \cite{CLMP1,CLMP2,ChKi,Kie}, in the study of turbulent Euler flows \cite{DEM,Stuart} and in connection with self-dual condensates for some Chern-Simons-Higgs model \cite{CLW,DEFM,DJLW,EsFi,LiWa,LinYan,NoTa}. 

\medskip \noindent For $N=2$ Br\'{e}zis and Merle \cite{12} initiated the study of the asymptotic behavior for solutions of \eqref{E1} by providing a concentration-compactness result in $\Omega$ without requiring any boundary condition. A quantization property for concentration masses has been later given in \cite{32}, and a very refined asymptotic description has been achieved in \cite{cl1,31}. A first natural question concerns the validity of a similar asymptotic behavior in the quasilinear case $N>2$, where the nonlinearity of the differential operator creates an additional difficulty. The only available result is a concentration-compactness result \cite{3,i}, which provides a too weak compactness property towards existence issues for \eqref{E1}. 
Since  a complete classification for the limiting problem
\begin{equation} \label{1934}
\left\{ \begin{array}{ll}
          -\Delta_{N} U=e^U \ \mbox{in}\ \mathbb{R}^{N}\\
          \int_{\mathbb{R}^{N}}e^U<\infty
        \end{array} \right.
\end{equation}
is not available for $N>2$ (except for extremals of the corresponding Moser-Trudinger's inequality \cite{051,x}) as opposite to the case $N=2$ \cite{15}, the starting point of Li-Shafrir's analysis \cite{32} fails and a general quantization property is completely missing. 
Under a ``mild" control on the boundary values of $u$, Y.Y.Li and independently Wolanski have proposed for $N=2$ an alternative approach based on Pohozaev identities, successfully applied also in other contexts \cite{030,08,07}. The typical assumption on $V$ is the following:
\begin{equation}\label{E15}
\frac{1}{C_0}\leq V(x)\leq C_0 \hbox{ and }|\nabla V(x)| \leq C_0 \qquad \forall\,x \in \Omega
\end{equation}
for some $C_0>0$.

\medskip \noindent Pushing the analysis of \cite{3,i} up to the boundary and making use of the above approach, our first main result is the following:
\begin{thm}\label{T1}
Let $u_{k}\in C^{1,\alpha}(\overline{\Omega})$, $\alpha \in (0,1)$, be a sequence of weak solutions to
\begin{equation}\label{E2}
-\Delta_N u_k=V_k e^{u_k}\qquad \mbox{in}\ \Omega,
\end{equation}
where $V_k$ satisfies \eqref{E15} for all $k\in\mathbb{N}$. Assume that
\begin{equation}\label{E21913}
\sup_{ k \in \mathbb{N}} \int_\Omega e^{u_k}<+\infty
\end{equation}
and
$$\mbox{osc}_{\partial\Omega} u_{k}=\sup_{\partial\Omega}u_{k}-\inf_{\partial\Omega}u_{k}\leq M$$
for some $M \in \mathbb{R}$. Then, up to a subsequence, $u_{k}$ verifies one of the following alternatives: either\\
(i) $u_{k}$ is uniformly bounded in $L_{loc}^{\infty}(\Omega)$\\
or\\
(ii) $u_{k}\rightarrow-\infty$ as $k\rightarrow+\infty$ uniformly in $L_{loc}^{\infty}(\Omega)$\\
or\\
(iii) there exists a finite, non-empty set $S=\{p_{1},...,p_{m}\}\subset\Omega$ such that $u_{k}\rightarrow-\infty$ uniformly in $L_{loc}^{\infty}(\Omega\setminus S)$ and \begin{equation} \label{1919}
V_{k}e^{u_{k}}\rightharpoonup c_N \sum_{i=1}^{m} \delta_{p_i}
\end{equation}
weakly in the sense of measures in $\Omega$ as $k\rightarrow+\infty$, where $c_N=N\big(\frac{N^{2}}{N-1}\big)^{N-1} \omega_{N}$ with $\omega_{N}=|B_1(0)|$. In addition, if $\mbox{osc}_{\partial\Omega} u_{k}=0$ for all $k$, alternatives (i)-(iii) do hold in $\overline{\Omega}$, with $S \subset \Omega$ in case (iii).
\end{thm}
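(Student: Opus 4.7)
The plan is to combine the interior concentration-compactness analysis of \cite{3,i} with a Pohozaev-type identity, in the spirit of Y.Y.~Li and Wolanski, to pin down the quantized mass $c_N$; the boundary oscillation bound is used to rule out mixed regimes and to upgrade a local dichotomy to the claimed trichotomy.

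First, \cite{3,i} yields, up to a subsequence, a finite (possibly empty) set $S=\{p_1,\ldots,p_m\}\subset\Omega$ such that on every compact $K\subset\subset\Omega\setminus S$ either $u_k$ is uniformly bounded in $L^\infty(K)$ or $u_k\to-\infty$ uniformly on $K$, and $V_k e^{u_k}\rightharpoonup \mu+\sum_i\alpha_i\delta_{p_i}$ weakly in the sense of measures on $\Omega$, where $\mu$ is absolutely continuous and $\alpha_i\geq c^\star$ for a constant $c^\star>0$ depending only on $C_0,N$. Set $v_k:=u_k-\max_{\partial\Omega}u_k$; by the weak maximum principle applied to the $N$-superharmonic function $v_k$ together with $\mathrm{osc}_{\partial\Omega}u_k\leq M$, we have $v_k\geq -M$ in $\Omega$. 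Then: (a) if $S=\emptyset$, standard $C^{1,\alpha}$-estimates for $-\Delta_N$ under \eqref{E15} make $v_k$ precompact in $C^{1,\alpha}_{\mathrm{loc}}(\Omega)$, so alternative (i) or (ii) holds according to whether $\max_{\partial\Omega}u_k$ stays bounded or tends to $-\infty$; (b) if $S\neq\emptyset$, the $L^1$-bound $\int_\Omega e^{u_k}\leq C$ combined with $v_k\geq -M$ forces $\max_{\partial\Omega}u_k\to-\infty$, $\mu\equiv 0$, and therefore $u_k\to-\infty$ uniformly on compacts of $\Omega\setminus S$.

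To upgrade $\alpha_i\geq c^\star$ to the exact value $\alpha_i=c_N$, test $-\Delta_N u_k=V_k e^{u_k}$ against $(x-p_i)\cdot\nabla u_k$ on $B_\delta(p_i)$ for $0<\delta\ll 1$, obtaining the Pohozaev identity
\begin{equation*}
\int_{\partial B_\delta(p_i)}\left[\tfrac{1}{N}|\nabla u_k|^N(x-p_i)\cdot\nu-|\nabla u_k|^{N-2}\partial_\nu u_k\,(x-p_i)\cdot\nabla u_k\right]d\sigma = -N\int_{B_\delta(p_i)}V_k e^{u_k}\,dx + R_k(\delta),
\end{equation*}
with $R_k(\delta)=O(\delta)$ uniformly in $k$, thanks to $|\nabla V_k|\leq C_0$ and the uniform decay $e^{u_k}\to 0$ on $\partial B_\delta(p_i)$. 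Standard $C^{1,\alpha}$-estimates for $-\Delta_N$ on the annulus $B_\delta(p_i)\setminus B_{\delta/2}(p_i)$ (where $V_k e^{u_k}\to 0$ uniformly) guarantee that $v_k\to v_\infty$ in $C^1(\partial B_\delta(p_i))$, and $v_\infty$ coincides locally with the Green's function of $-\Delta_N$ associated with $\sum_j\alpha_j\delta_{p_j}$; by the explicit fundamental solution of the $N$-Laplacian, $v_\infty(x)=-\bigl(\alpha_i/(N\omega_N)\bigr)^{1/(N-1)}\log|x-p_i|+O(1)$ near $p_i$. Inserting this expansion reduces the left-hand side to $-(N-1)\omega_N\bigl(\alpha_i/(N\omega_N)\bigr)^{N/(N-1)}+O(\delta)$; equating with $-N\alpha_i+O(\delta)$ and letting $\delta\to 0$ yields $\alpha_i^{1/(N-1)}=N^{(2N-1)/(N-1)}\omega_N^{1/(N-1)}/(N-1)$, i.e.\ $\alpha_i=N^{2N-1}\omega_N/(N-1)^{N-1}=c_N$.

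Finally, when $\mathrm{osc}_{\partial\Omega}u_k=0$, the $C^{1,\alpha}(\overline\Omega)$ boundary regularity together with a reflection argument across $\partial\Omega$ extends the above trichotomy uniformly up to $\overline\Omega$, forcing $S\subset\Omega$. The main obstacle is the $C^1$-convergence of $v_k$ on $\partial B_\delta(p_i)$: in the absence of a full classification for \eqref{1934}, one has to promote the $L^1$-concentration output of \cite{3,i} on the annulus $B_\delta(p_i)\setminus B_{\delta/2}(p_i)$ to pointwise gradient control, which is delicate for the quasilinear $\Delta_N$ since standard $C^{1,\alpha}$-estimates are sensitive to the regularity of the right-hand side.
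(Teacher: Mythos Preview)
Your overall strategy---concentration-compactness from \cite{3,i} followed by a Pohozaev identity to pin down $\alpha_i=c_N$---matches the paper's, but several steps are not justified and one is incorrect.

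First, in case (b) your claim that ``$\int_\Omega e^{u_k}\leq C$ together with $v_k\geq -M$ forces $\max_{\partial\Omega}u_k\to-\infty$ and $\mu\equiv 0$'' does not follow. From $v_k\geq -M$ you only get $\int_\Omega e^{u_k}\geq |\Omega|\,e^{\max_{\partial\Omega}u_k-M}$, which bounds $\max_{\partial\Omega}u_k$ from above, not from below. Ruling out the mixed case ($u_k$ bounded on $\Omega\setminus S$ with $\mu\not\equiv 0$) requires a genuine argument: the paper constructs an auxiliary $w_k$ with $-\Delta_N w_k=\chi_{B_{r_k}(p_i)}V_ke^{u_k}$, $w_k=0$ on $\partial B_R(p_i)$, and shows that its limit $w$ satisfies $e^w\notin L^1$ near $p_i$ (using $\alpha_i\geq N^N\omega_N$), contradicting $\int e^{w_k}\leq e^{-m}\int e^{u_k}<\infty$. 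Second, in the Pohozaev step you write $v_\infty=-\gamma\log|x-p_i|+O(1)$; this is not enough to evaluate the boundary integral, which involves $\nabla v_\infty$. You need $|x-p_i|\,|\nabla(v_\infty-\gamma\,\Gamma)|\to 0$, and this is exactly the content of Kichenassamy--Veron \cite{27}, invoked as Theorem~\ref{T2} in the paper. Your closing paragraph misidentifies the obstacle: the $C^1$-convergence of $v_k$ on the fixed annulus is routine (the right-hand side vanishes uniformly there); the delicate point is the asymptotic structure of the \emph{limit} near $p_i$.

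The most serious gap is the boundary case. A ``reflection argument across $\partial\Omega$'' does not work here: while $N$-harmonic functions are preserved under such reflections, the equation $-\Delta_N u=Ve^u$ is not, precisely because of the nonlinearity of $\Delta_N$ (the paper explicitly flags this obstruction in the introduction). Excluding boundary blow-up when $\mathrm{osc}_{\partial\Omega}u_k=0$ is the content of the paper's Step~5, which is substantial: one applies the Pohozaev identity on $\Omega\cap B_R(p_0)$ with a carefully shifted center $y_k=p_0+\rho_{k,R}\nu(p_0)$ chosen so that the uncontrolled boundary integral $\int_{\partial\Omega\cap B_R}\langle x-y_k,\nu\rangle|\nabla u_k|^N$ vanishes identically, and one must also prove that the limit $s$ of $s_k=u_k-c_k$ is actually $C^1(\overline{\Omega})$ (so that the remaining terms are $o(1)$ as $R\to 0$). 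The latter requires a separate argument showing $e^s\in L^q$ near any putative boundary singularity, via truncation estimates and a distribution-function bound for $|s-\varphi|$. None of this can be replaced by reflection.
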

\noindent Without an uniform control on the oscillation of $u_k$ on $\partial \Omega$, in general the concentration mass $\alpha_i$ in \eqref{1919} at each $p_i$, $i=1,\dots,m$, just satisfies $\alpha_{i}\geq N^N \omega_{N}$, see \cite{3,i} for details. Moreover, the assumption $\mbox{osc}_{\partial\Omega} u_{k}=0$ is used here to rule out boundary blow-up. For strictly convex domains, one could simply use the moving-plane method to exclude maximum points of $u_k$ near $\partial \Omega$ as in \cite{i}. For $N=2$ this extra assumption can be removed by using the Kelvin transform to take care of non-convex domains, see \cite{38,ii}. Although $N-$harmonic functions in $\mathbb{R}^N$ are invariant under Kelvin transform, such a property does not carry over to \eqref{E2} due to the nonlinearity of $-\Delta_{N}$. 
To overcome such a difficulty, we still make use of the Pohozaev identity near boundary points, to exclude the boundary blow-up as in \cite{marpe,robwei}.

\medskip \noindent Problem \eqref{1934} has a $(N+1)-$dimensional family of explicit solutions $U_{\epsilon,p}(x)=U(\frac{x-p}{\epsilon})-N \log \epsilon$, $\epsilon>0$ and $p \in \mathbb{R}^N$, where
\begin{equation}\label{E16}
U(x)=\log\frac{F_N}{(1+|x|^{\frac{N}{N-1}})^N},\quad x\in \mathbb{R}^{N},
\end{equation}
with $F_{N}=N\big(\frac{N^{2}}{N-1}\big)^{N-1}$. As $\epsilon \to 0^+$, a description of the blow-up behavior at $p$ is well illustrated by $U_{\epsilon,p}$. Since
$$\int_{\mathbb{R}^N} e^{U_{\epsilon,p}}=c_N,$$
in analogy with Li-Shafrir's result it is expected that the concentration mass $\alpha_i$ in \eqref{1919} at each $p_i$, $i=1,...,m$, should be an integer multiple of $c_{N}$. The additional assumption $\sup_k \mbox{osc}_{\partial\Omega} u_{k}<+\infty$ allows us to prove that all the blow-up points $p_i$, $i=1,\dots,m$, are ``simple" in the sense $\alpha_i=c_N$.

\medskip \noindent Concerning the $N$-mean field equation \eqref{E1}, as a simple consequence of Theorem \ref{T1} we deduce the following crucial compactness property:
\begin{cor}\label{T11}
Let $\Lambda\subset [0,+\infty)\setminus c_N \mathbb{N}$ be a compact set. Then, there exists a constant $C>0$ such that
$\|u\|_\infty \leq C$ does hold for all $\lambda\in\Lambda$, all weak solution $u\in C^{1,\alpha}(\overline{\Omega})$, $\alpha \in (0,1)$, of \eqref{E1} and all $V$ satisfying \eqref{E15}. 
\end{cor}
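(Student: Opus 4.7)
The plan is to argue by contradiction and reduce Corollary \ref{T11} to Theorem \ref{T1} via a rescaling that absorbs the denominator $\int_\Omega V e^u$ into the unknown. If the statement fails, one extracts sequences $\lambda_k\in\Lambda$, potentials $V_k$ satisfying \eqref{E15}, and weak solutions $u_k\in C^{1,\alpha}(\overline{\Omega})$ of \eqref{E1} with $\|u_k\|_\infty\to+\infty$. Up to a subsequence $\lambda_k\to\lambda^{*}\in\Lambda\setminus c_N\mathbb{N}$, and since the trivial case $\lambda_k=0$ forces $u_k\equiv 0$ by $u_k|_{\partial\Omega}=0$ and $-\Delta_N u_k=0$, we may assume $\lambda_k>0$. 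Because $V_k\geq 0$, the weak comparison principle for $-\Delta_N$ combined with the boundary condition yields $u_k\geq 0$ in $\Omega$; together with $V_k\geq 1/C_0$, this gives the crucial lower bound $\int_\Omega V_k e^{u_k}\geq |\Omega|/C_0$, so that the effective mass $\rho_k:=\lambda_k/\int_\Omega V_k e^{u_k}$ is well-defined and uniformly bounded above by $C_0\lambda_k/|\Omega|$.

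The key step is to set $v_k:=u_k+\log\rho_k$. Since $\nabla v_k=\nabla u_k$ and $e^{v_k}=\rho_k e^{u_k}$, a direct calculation gives $-\Delta_N v_k=V_k e^{v_k}$ in $\Omega$, with $v_k\equiv\log\rho_k$ on $\partial\Omega$, so $\mbox{osc}_{\partial\Omega}\,v_k=0$. Moreover $\int_\Omega V_k e^{v_k}=\lambda_k$, and using $V_k\geq 1/C_0$ we have $\int_\Omega e^{v_k}\leq C_0\lambda_k$, which is uniformly bounded. Hence $v_k$ satisfies all the hypotheses of Theorem \ref{T1} with potential $V_k$, and the vanishing boundary oscillation promotes one of the alternatives (i), (ii), (iii) to hold on the whole of $\overline{\Omega}$ (with $S\subset\Omega$ in case (iii)).

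I then rule out each alternative in turn. In case (iii), integrating \eqref{1919} over $\Omega$ yields $\lambda_k=\int_\Omega V_k e^{v_k}\to c_N m$ for some integer $m\geq 1$, so $\lambda^{*}=c_N m\in c_N\mathbb{N}$, contradicting $\lambda^{*}\notin c_N\mathbb{N}$. In case (i), $v_k$ is uniformly bounded in $L^\infty(\overline{\Omega})$; the constant boundary trace $\log\rho_k$ is then bounded, so $\rho_k$ remains in a compact subinterval of $(0,\infty)$, and consequently $u_k=v_k-\log\rho_k$ is uniformly bounded, against $\|u_k\|_\infty\to+\infty$. In case (ii), $v_k\to-\infty$ uniformly on $\overline{\Omega}$, whence $\lambda_k=\int_\Omega V_k e^{v_k}\to 0$ and $\lambda^{*}=0$; this is incompatible with $\lambda^{*}\in\Lambda\setminus c_N\mathbb{N}$ whenever $0\in c_N\mathbb{N}$ (i.e.\ under the convention $0\in\mathbb{N}$), and in the alternative convention a direct a priori bound for small $\lambda_k$ (obtained by testing \eqref{E1} with $u_k$ and using the Moser-Trudinger inequality together with $N$-Laplacian regularity) forces $\|u_k\|_\infty\to 0$, once more a contradiction.

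The main delicate point is the shifting step: the sign $u_k\geq 0$ from the maximum principle furnishes the lower bound on $\int_\Omega V_k e^{u_k}$ that makes $\log\rho_k$ meaningful and uniformly controlled, and only after the shift by the \emph{constant} $\log\rho_k$ does the equation for $v_k$ fall exactly within the framework of Theorem \ref{T1}. Once this reduction is in place, the fact that the quantization constant $c_N$ of Theorem \ref{T1} matches precisely the forbidden values in the definition of $\Lambda$ rules out the three alternatives essentially by inspection.
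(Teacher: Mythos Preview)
Your argument is correct and follows essentially the same route as the paper: shift $u_k$ by the constant $\log\rho_k$ (the paper writes $\hat u_k=u_k-\alpha_k$ with $\alpha_k=-\log\rho_k$) so that the shifted sequence solves $-\Delta_N v_k=V_k e^{v_k}$ with constant boundary values, then invoke Theorem~\ref{T1}. The key identity $\int_\Omega V_k e^{v_k}=\lambda_k$ and the exclusion of $c_N\mathbb N$ from $\Lambda$ are used identically.

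The only noteworthy difference is in how alternatives (i) and (ii) are dispatched. The paper does this in one stroke: since $u_k\in W^{1,N}_0(\Omega)$ solves $-\Delta_N u_k=V_k e^{v_k}$, Corollary~\ref{2002} gives $\|u_k\|_\infty\le C\|V_k e^{v_k}\|_{L^\infty}^{1/(N-1)}$, and so $\|u_k\|_\infty\to+\infty$ forces $\max_\Omega V_k e^{v_k}\to+\infty$, i.e.\ $\max_\Omega v_k\to+\infty$, which rules out (i) and (ii) simultaneously and forces (iii). Your case-by-case treatment reaches the same conclusion, but the handling of (ii) is unnecessarily indirect: the convention split on whether $0\in\mathbb N$ and the appeal to ``testing \eqref{E1} with $u_k$ and Moser--Trudinger'' are not needed. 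In case (ii) one has $v_k\to-\infty$ uniformly on $\overline\Omega$, hence $V_k e^{v_k}\to 0$ in $L^\infty(\Omega)$; applying Corollary~\ref{2002} directly to $u_k$ (which has zero boundary data and right-hand side $V_k e^{v_k}$) yields $\|u_k\|_\infty\to 0$, a contradiction, with no reference to $\lambda^*$ at all. Your sketch via testing and Moser--Trudinger is not wrong in spirit but, as stated, does not quite close (testing gives $\|u_k\|_{W^{1,N}_0}^N\le\lambda_k\max u_k$, and one still needs an independent control on $\max u_k$, which in the borderline $W^{1,N}$ setting again comes from the Br\'ezis--Merle/Serrin type estimates rather than Moser--Trudinger alone).
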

\noindent In the sequel, we will refer to the case $\lambda\neq c_{N}\mathbb{N}$ as the \emph{non-resonant regime}. Existence issues can be attacked by variational methods: solutions of  \eqref{E1} can be found as critical points of
\begin{equation}\label{E27}
J_{\lambda}(u)=\frac{1}{N}\int_{\Omega}|\nabla u|^{N}-\lambda\log\bigg(\int_{\Omega}Ve^{u}\bigg),\ u\in W_{0}^{1,N}(\Omega).
\end{equation}
The Moser-Trudinger inequality \cite{zz} guarantees that the functional $J_{\lambda}$
is well-defined and $C^{1}$-Fr\'{e}chet differentiable on $W_{0}^{1,N}(\Omega)$ for any $\lambda\in\mathbb{R}$. Moreover, if $\lambda<c_{N}$ the functional $J_{\lambda}$ is coercive and then attains the global minimum. For $\lambda=c_{N}$ $J_{\lambda}$ still has a lower bound but is not coercive anymore: in general, in the resonant regime $\lambda \in c_N \mathbb{N}$ existence issues are very delicate. When $\lambda>c_{N}$ the functional $J_{\lambda}$ is unbounded both from below and from above, and critical points have to be found among saddle points. Moreover, the \emph{Palais-Smale condition} for $J_{\lambda}$ is not globally available, see \cite{aha}, but holds only for bounded sequences in $W^{1,N}_0(\Omega)$.

\medskip \noindent The second main result is the following:
\begin{thm} \label{thm0954}
Assume that the space of formal barycenters $\mathfrak{B}_{m}(\overline{\Omega})$ of $\overline{\Omega}$ with order $m \geq 1$ is non contractible. Then equation \eqref{E1} has a solution in $C^{1,\alpha}(\overline{\Omega})$, $\alpha \in (0,1)$, for all $\lambda\in\big(c_{N}m,c_{N}(m+1)\big)$.
\end{thm}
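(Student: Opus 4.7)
The plan is to produce a solution by a min-max scheme based on the topology of the formal barycenter space $\mathfrak{B}_m(\overline{\Omega})$, following the Djadli--Malchiodi approach developed for the planar mean field equation. The underlying idea is that for $\lambda>c_N m$ the functional $J_\lambda$ can be pushed to $-\infty$ along sequences of functions that concentrate on configurations of at most $m$ points of $\overline{\Omega}$; conversely, very low sub-levels must ``look like'' such configurations. This geometric rigidity, combined with the non-contractibility of $\mathfrak{B}_m(\overline{\Omega})$, will force a change of topology in the sub-level filtration of the contractible space $W_0^{1,N}(\Omega)$, yielding a critical value by a standard min-max argument.

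First I would establish the key analytic tool, namely an \emph{improved Moser--Trudinger inequality} in the spirit of Chen--Li: there exist constants $\varepsilon_0,\delta_0,C>0$ such that if $\int_{\Omega} V e^{u}$ is $\varepsilon_0$-spread over $m+1$ disjoint sets (in the sense that there are disjoint Borel sets $A_1,\dots,A_{m+1}$ with $\mathrm{dist}(A_i,A_j)\ge \delta_0$ and $\int_{A_i} V e^u \ge \varepsilon_0 \int_\Omega V e^u$), then
\[
\log\!\int_\Omega V e^u \le \frac{1}{(m+1)\,c_N}\int_\Omega |\nabla u|^N + C,
\]
i.e.\ the best constant in Moser--Trudinger improves by a factor $m+1$. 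The contrapositive says that whenever $J_\lambda(u)\le -L$ with $L$ large and $\lambda\in(c_N m, c_N(m+1))$, the measure $Ve^u/\int_\Omega Ve^u$ is $\sigma$-close to a convex combination of at most $m$ Dirac masses in $\overline{\Omega}$ for some $\sigma=\sigma(L)\to 0$. Using a center-of-mass/projection procedure, this yields a continuous ``barycenter map'' $\Psi:\{J_\lambda\le -L\}\to \mathfrak{B}_m(\overline{\Omega})$.

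Next I would construct a reverse test-function map. For $\bgl{\sigma}=\sum_{i=1}^{m}t_i\delta_{x_i}\in \mathfrak{B}_m(\overline{\Omega})$ and a small parameter $\epsilon>0$, set
\[
\Phi_\epsilon(\bgl{\sigma})(x)=\log\sum_{i=1}^{m}\frac{t_i}{\bigl(\epsilon^{N/(N-1)}+|x-x_i|^{N/(N-1)}\bigr)^N},
\]
suitably truncated near $\partial\Omega$ to respect the Dirichlet condition (using the fundamental solution of $-\Delta_N$ to handle the boundary, exactly as in the construction of $U_{\epsilon,p}$ in the Introduction). A direct computation, exploiting the explicit form of the standard bubbles and $\int_{\mathbb{R}^N} e^{U_{\epsilon,p}}=c_N$, shows that
\[
J_\lambda(\Phi_\epsilon(\bgl{\sigma}))\le (c_N m - \lambda)\log\frac{1}{\epsilon}+O(1),
\]
uniformly in $\bgl{\sigma}\in\mathfrak{B}_m(\overline{\Omega})$. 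Since $\lambda>c_N m$, the right-hand side tends to $-\infty$ as $\epsilon\to 0^+$, so $\Phi_\epsilon$ maps $\mathfrak{B}_m(\overline{\Omega})$ into the sub-level $\{J_\lambda\le -L\}$ for $\epsilon$ small. A further fine estimate on the barycenter of $V e^{\Phi_\epsilon(\bgl{\sigma})}$ shows that $\Psi\circ\Phi_\epsilon$ is homotopic to the identity on $\mathfrak{B}_m(\overline{\Omega})$, so the sub-level $\{J_\lambda\le -L\}$ inherits the non-contractibility of $\mathfrak{B}_m(\overline{\Omega})$.

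Finally, I would run the min-max scheme. Since $W_0^{1,N}(\Omega)$ is contractible while $\{J_\lambda\le -L\}$ is not, one can cone off $\Phi_\epsilon$ inside the ambient space to obtain a family of maps $\mathcal{F}$ from the cone over $\mathfrak{B}_m(\overline{\Omega})$ and define
\[
c_\lambda=\inf_{f\in\mathcal{F}}\sup_{y}\,J_\lambda(f(y))>-\infty,
\]
so that $c_\lambda$ is a candidate critical value. The obstruction here is the failure of the Palais--Smale condition for $J_\lambda$: this is bypassed via Struwe's monotonicity trick applied to the rescaled family $\lambda\mapsto J_\lambda/\lambda$, which produces bounded Palais--Smale sequences, hence weak critical points $u_{\lambda_k}$ of \eqref{E1} with $\lambda_k\to\lambda$. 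The \emph{main obstacle}, and the reason the hypothesis $\lambda\notin c_N\mathbb{N}$ enters, is to pass these approximate critical points to the limit: here one invokes Corollary~\ref{T11}, which in the non-resonant regime provides a uniform $L^\infty$ bound and, via standard $C^{1,\alpha}$ regularity for the $N$-Laplacian, a subsequence converging in $C^{1,\alpha}(\overline{\Omega})$ to a genuine solution of \eqref{E1}. The improved Moser--Trudinger inequality and the homotopy $\Psi\circ\Phi_\epsilon\simeq \mathrm{id}$ are the technically delicate steps, since the quasilinear nature of $-\Delta_N$ prevents the use of Green's representation and conformal tricks available for $N=2$.
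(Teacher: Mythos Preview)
Your proposal is correct and follows the same variational architecture as the paper: the improved Moser--Trudinger inequality (Lemma~\ref{T18}), the barycenter projection $\Psi$ on low sublevels (Lemmas~\ref{T20}--\ref{1122}), the test-function map $\Phi_\epsilon$ built from the standard bubbles, and the homotopy $\Psi\circ\Phi_\epsilon\simeq\mathrm{id}$ (Lemma~\ref{T27}) are exactly the ingredients assembled in Section~\ref{Sec5}. The one genuine difference lies in how the failure of Palais--Smale is bypassed. You invoke Struwe's monotonicity trick on $\lambda\mapsto J_\lambda/\lambda$ to obtain bounded PS-sequences and hence critical points for a.e.\ $\lambda$, and then use Corollary~\ref{T11} to pass from $\lambda_k\to\lambda$ to the target value. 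The paper instead appeals directly to Lucia's deformation lemma \cite{aha} (stated as Lemma~\ref{T15}): in the non-resonant regime, $J_\lambda^a$ is a deformation retract of $J_\lambda^b$ whenever there are no critical levels in $[a,b]$, with the compactness of Corollary~\ref{T11} built into the lemma rather than applied a posteriori. One then argues by contradiction: the high sublevel $J_\lambda^L$ is contractible (Lemma~\ref{T17}) while $J_\lambda^{-L}$ dominates $\mathfrak{B}_m$ and is not, so a critical level must lie in between. The paper explicitly flags this as simpler than the Struwe/Djadli--Malchiodi route you outline, since it avoids the a.e.-$\lambda$ detour altogether; your version is nonetheless fully valid. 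Two minor technical points: the paper replaces $\overline{\Omega}$ by a compact retract $K\subset\Omega$ before building $\Phi_\epsilon$, precisely so that the bubble centers stay uniformly away from $\partial\Omega$ and the cut-off does not destroy the energy estimate (your ``suitably truncated'' hides this); and the correct asymptotics carry an extra factor $\tfrac{N}{N-1}$, namely $J_\lambda(\Phi_\epsilon(\sigma))\le \tfrac{N}{N-1}(c_Nm-\lambda)\log\tfrac{1}{\epsilon}+O(1)$, though the sign and conclusion are unaffected.
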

\noindent For mean-field equations, such a variational approach has been introduced in \cite{DJLW1} and fully exploited later by Djadli and Malchiodi \cite{0120} in their study of constant $Q$-curvature metrics on four manifolds. It has revelead to be very powerful in many contexts, see for example \cite{007,BBB,012,013} and refences therein. Alternative approaches are available: the computation of the corresponding Leray-Schauder degree \cite{cl1,010}, based on a very refined asymptotic analysis of blow-up solutions; perturbative constructions of Lyapunov-Schimdt in the almost resonant regime \cite{BP,010,DEFM,DEM,DKM,EsFi,EGP,LinYan}. For our problem a refined asymptotic analysis for blow-up solutions is still missing, and perturbation arguments are very difficult due to the nonlinearity of $\Delta_N$. A variational approach is the only reasonable way to attack existence issues, and in this way the analytic problem is reduced to a topological one concerning the non-contractibility of a model space, the so-called \emph{space of formal barycenters}, characterizing the very low sublevels of $J_{\lambda}$. We refer to Section \ref{Sec5} for a definition of $\mathfrak{B}_{m}(\overline{\Omega})$. To have non-contractibility of $\mathfrak{B}_{m}(\overline{\Omega})$ for domains $\Omega$ homotopically equivalent to a finite simplicial complex,  a sufficient condition is the non-triviality of the $\mathbb{Z}$-homology, see \cite{kal}. Let us emphasize that the variational approach produces solutions a.e. $\lambda\in\big(c_{N}m,c_{N}(m+1)\big)$, $m \geq 1$, and Corollary \ref{T11} is crucial to get the validity of Theorem \ref{thm0954} for all $\lambda$ in such a range.

\medskip \noindent The paper is organized as follows. In Section \ref{Sec3} we show how to push the concentration-compactness analysis \cite{3,i} up to the boundary, by discussing boundary blow-up and mass quantization. Section \ref{Sec5} is devoted to Theorem \ref{thm0954} and some comments concerning $\mathfrak{B}_{m}(\overline{\Omega})$. In the appendix, we collect some basic results that will be used frequently throughout the paper.


\section{Concentration-Compactness analysis}\label{Sec3}
\noindent Even though representation formulas are not available for $\Delta_N$, the Br\'{e}zis-Merle's inequality \cite{12} can be extended to $N>2$ by different means:
\begin{lm} \cite{3,i} \label{T4}
Let $u \in C^{1,\alpha}(\overline{\Omega})$ be a weak solution of 
$$-\Delta_{N} u=f\ \quad \mbox{in}\ \Omega$$
with $f\in L^{1}(\Omega)$. Let $\varphi$ be a $N$-harmonic function in $\Omega$ with $\varphi=u$ on $\partial\Omega$. Then, for every $\alpha\in(0,\alpha_{N})$ there exists a constant $C=C(\alpha,|\Omega|)$ such that
\begin{equation}\label{E11}
\displaystyle\int_{\Omega}\exp\bigg[\frac{\alpha |u(x)-\varphi(x)|}{\|f\|_{L^{1}}^{\frac{1}{N-1}}}\bigg]\leq C,
\end{equation}
where $\alpha_N=(N^N d_{N} \omega_{N})^{\frac{1}{N-1}} $ and
$$d_{N}=\inf_{X\neq Y\in \mathbb{R}^{N}}\frac{<|X|^{N-2}X-|Y|^{N-2}Y,X-Y>}{|X-Y|^{N}}\,>0.$$
In addition, if $u=0$ on $\partial \Omega$ inequality \eqref{E11} does hold with $\alpha_N=(N^N \omega_{N})^{\frac{1}{N-1}}$.
\end{lm}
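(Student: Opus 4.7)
The plan is to introduce $w := u - \varphi$, which lies in $W^{1,N}_0(\Omega) \cap C(\overline{\Omega})$ thanks to the Dirichlet boundary condition $u = \varphi$ on $\partial \Omega$ and to standard boundary regularity for the $N$-Laplacian, and to derive exponential decay of the distribution function $\mu(t) := |\{w > t\}|$ via a coarea/isoperimetric argument. Since the replacement $(u,\varphi,f) \mapsto (-u,-\varphi,-f)$ preserves $\|f\|_{L^1}$ and turns $w_+$ into $w_-$, it is enough to bound $\int_\Omega \exp(\alpha\, w_+/\|f\|_{L^1}^{1/(N-1)})$ and then combine via $|w| = w_+ + w_-$.

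The core of the proof is a sharp $L^N$-gradient estimate on thin level layers. Fixing $0 \leq t < \tau$ and testing the weak formulations of $-\Delta_N u = f$ and $-\Delta_N \varphi = 0$ against the admissible function $\phi := \min\{(w-t)_+,\, \tau - t\} \in W^{1,N}_0(\Omega)$, the identity $\nabla \phi = \chi_{\{t < w < \tau\}}(\nabla u - \nabla \varphi)$ combined with the pointwise monotonicity defining $d_N$ yield
$$d_N \int_{\{t < w < \tau\}} |\nabla w|^N\, dx \;\leq\; \int_\Omega f\, \phi\, dx \;\leq\; (\tau - t)\,\|f\|_{L^1}.$$
Rewriting the left-hand side by the coarea formula, applying H\"older's inequality with exponents $N$ and $N/(N-1)$ on each level set $\{w = s\}$, and using both $-\mu'(s) = \int_{\{w=s\}} |\nabla w|^{-1}\, d\mathcal{H}^{N-1}$ and the Euclidean isoperimetric inequality $\mathcal{H}^{N-1}(\{w = s\}) \geq N \omega_N^{1/N} \mu(s)^{(N-1)/N}$ (valid because $\{w > s\}$ is compactly contained in $\Omega$ for every $s > 0$), and finally dividing by $\tau - t$ and letting $\tau \to t^+$ at Lebesgue points, I obtain the pointwise differential inequality
$$-\frac{\mu'(t)}{\mu(t)} \;\geq\; \frac{\alpha_N}{\|f\|_{L^1}^{1/(N-1)}}, \qquad \alpha_N = (N^N d_N \omega_N)^{1/(N-1)}.$$

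Integrating the ODE from $0$, together with $\mu(0) \leq |\Omega|$, gives the exponential decay $\mu(t) \leq |\Omega| \exp(-\alpha_N t/\|f\|_{L^1}^{1/(N-1)})$, and Cavalieri's principle then yields
$$\int_\Omega \exp\!\bigg(\frac{\alpha\, w_+}{\|f\|_{L^1}^{1/(N-1)}}\bigg) dx \;\leq\; |\Omega| \cdot \frac{\alpha_N}{\alpha_N - \alpha} \qquad \text{for every } 0 < \alpha < \alpha_N,$$
and the same inequality for $w_-$ by the symmetry discussed above. In the special case $u \equiv 0$ on $\partial \Omega$ one has $\varphi \equiv 0$, so the monotonicity step becomes redundant: direct testing of $-\Delta_N u = f$ against $\phi$ gives $\int_{\{t<u<\tau\}} |\nabla u|^N \leq (\tau - t)\|f\|_{L^1}$ with no factor $d_N$, and the same scheme produces the improved constant $(N^N \omega_N)^{1/(N-1)}$. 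I expect the main technical point to be the rigorous passage from the integrated inequality to the pointwise ODE for $\mu$: this requires working at a.e. $t$, justifying the coarea/Sard manipulations from the $C^{1,\alpha}$-regularity of $u$, and handling the possible jumps of $\mu$ by an approximation argument.
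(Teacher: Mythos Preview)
Your argument is correct and is precisely the level-set/truncation method of the cited references~\cite{3,i}: test with $\min\{(w-t)_+,\tau-t\}$, use the monotonicity inequality defining $d_N$ (or drop it when $\varphi\equiv 0$), pass to a differential inequality for the distribution function via coarea and the isoperimetric inequality, and integrate. The paper does not give its own proof of this lemma but quotes it; however, it reproduces exactly the same mechanism later in Step~5 of the proof of Theorem~\ref{T1}, the only cosmetic difference being that there the isoperimetric step is phrased through the sharp Sobolev embedding $W^{1,1}_0\hookrightarrow L^{N/(N-1)}$ with constant $(N^N\omega_N)^{-1/N}$ (Federer--Fleming) applied to $H_{M,a}=\big(T_{M+a}(w)-T_M(w)\big)/a$, rather than through coarea on level hypersurfaces---the two formulations are equivalent. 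Your concern about jumps of $\mu$ is harmless: since $\mu$ is nonincreasing, the pointwise inequality $-\mu'(t)/\mu(t)\geq \alpha_N/\|f\|_{L^1}^{1/(N-1)}$ for a.e.\ $t$ already yields $\mu(t)\leq |\Omega|\exp(-\alpha_N t/\|f\|_{L^1}^{1/(N-1)})$, because any jump of $\mu$ only strengthens the decay.
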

\noindent Under some smallness uniform condition on the nonlinear term, a-priori estimates hold true as follows:
\begin{lm}\label{T6}
Let $u_{k}\in C^{1,\alpha}(\overline{\Omega})$, $\alpha \in (0,1)$, be a sequence of weak solutions to \eqref{E2}, where $V_k$ satisfies \eqref{E15} for all $k\in\mathbb{N}$. Assume that 
\begin{equation}\label{E12}
\sup_k \int_{\Omega\cap B_{4R}}V_{k}e^{u_{k}}<N^N d_{N} \omega_{N}
\end{equation}
does hold for some $R>0$, and $u_k$ satisfies $u_k=c_k$ on $\partial \Omega \cap \overline{B_{4R}}$, $u_k \geq c_k$ in $\Omega \cap B_{4R}$
for $c_k \in \mathbb{R}$ if $\partial \Omega \cap \overline{B_{4R}} \neq \emptyset$. Then
\begin{equation} \label{1540}
\sup_k \|u^{+}_{k}\|_{L^{\infty}(\Omega\cap B_{R})}<+\infty.
\end{equation}
\end{lm}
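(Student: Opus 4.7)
The plan is to follow the Brezis--Merle paradigm adapted to $-\Delta_N$: first upgrade the smallness of $\|V_k e^{u_k}\|_{L^1(D)}$, where $D:=\Omega\cap B_{4R}$, to a uniform $L^{q_0}$ bound, $q_0>1$, for $V_k e^{u_k}$ on a smaller subdomain, and then invoke the standard local $L^\infty$ estimate for $-\Delta_N$ with $L^{q_0}$ right-hand side to obtain the desired $L^\infty$ control on $u_k^+$.

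Let $\varphi_k\in W^{1,N}(D)$ be the $N$-harmonic replacement of $u_k$ in $D$, i.e.\ the weak solution of $-\Delta_N\varphi_k=0$ in $D$ with $\varphi_k=u_k$ on $\partial D$. Since $\sup_k\int_D V_k e^{u_k}<N^N d_N \omega_N=\alpha_N^{N-1}$, applying Lemma \ref{T4} to $u_k$ on $D$ yields some $q_0>1$, independent of $k$, with
\begin{equation*}
\int_D e^{q_0(u_k-\varphi_k)}\leq C.
\end{equation*}
Because $-\Delta_N u_k=V_k e^{u_k}\geq 0$, the function $u_k$ is $N$-subharmonic in $D$ with the same boundary data as $\varphi_k$, and the weak comparison principle for $-\Delta_N$ forces $\varphi_k\leq u_k$ in $D$. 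Combined with $u_k^+\leq e^{u_k}$ and the uniform estimate $\int_D e^{u_k}\leq C_0\int_D V_k e^{u_k}\leq C$ (coming from $V_k\geq 1/C_0$ in \eqref{E15}), this yields $\int_D \varphi_k^+\leq C$.

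The key point is to convert this integral bound into a pointwise upper bound on a ball containing $B_R$. In the interior case $B_{4R}\subset\Omega$, $\varphi_k^+$ is $N$-subharmonic on the whole $B_{4R}$, and the local $L^\infty$--$L^1$ estimate for nonnegative $N$-subsolutions (Moser iteration) immediately gives $\sup_{B_{2R}}\varphi_k\leq C$. In the boundary case $\partial\Omega\cap\overline{B_{4R}}\neq\emptyset$, the integral constraint together with $V_k\geq 1/C_0$ and $u_k\geq c_k$ in $D$ forces $e^{c_k}|D|\leq C$, so $c_k\leq M$ uniformly in $k$ for some constant $M\geq 0$; the function $w_k:=(\varphi_k-M)^+$ is then $N$-subharmonic in $D$ and vanishes on $\partial\Omega\cap\overline{B_{4R}}$ (since $\varphi_k=c_k\leq M$ there), so its zero-extension across $\partial\Omega$ is an admissible $N$-subsolution on the whole ball $B_{4R}$; the pointwise inequality $w_k\leq(u_k-M)^+\leq e^{u_k}$ (valid since $M\geq 0$) gives $\|w_k\|_{L^1(B_{4R})}\leq C$, and the same Moser iteration then provides $\sup_{B_{2R}\cap\Omega}\varphi_k\leq M+C$.

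Putting the two pieces together,
\begin{equation*}
\int_{B_{2R}\cap\Omega}e^{q_0 u_k}\leq\bigl(\sup_{B_{2R}\cap\Omega}e^{q_0\varphi_k}\bigr)\int_D e^{q_0(u_k-\varphi_k)}\leq C,
\end{equation*}
so that $V_k e^{u_k}$ is uniformly bounded in $L^{q_0}(B_{2R}\cap\Omega)$ with $q_0>1$. The standard local $L^\infty$ estimate for weak solutions of $-\Delta_N u=f$ with $f\in L^{q_0}$ (Serrin/DiBenedetto in the interior, complemented by its boundary version recalled in the appendix and exploiting the Dirichlet-type condition $u_k=c_k\leq M$ on $\partial\Omega\cap\overline{B_{4R}}$) then yields \eqref{1540}. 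The most delicate ingredient is the Moser-iteration step in the boundary case: it rests on the zero-extension of $w_k$ across $\partial\Omega$ being an admissible $N$-subsolution on $B_{4R}$, and this is precisely where the structural hypothesis $u_k\geq c_k$ in $D$ is used in an essential way.
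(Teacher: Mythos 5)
Your argument is correct and is essentially the paper's proof: the same $N$-harmonic replacement $\varphi_k$ on $\Omega\cap B_{4R}$, Lemma \ref{T4} combined with \eqref{E12} to get $e^{|u_k-\varphi_k|}$ uniformly bounded in $L^{q_0}$ for some $q_0>1$, the weak comparison principle to get $\varphi_k\leq u_k$, a uniform upper bound for $\varphi_k$ on $\Omega\cap B_{2R}$, and then the Serrin-type estimate (Theorem \ref{T5}) applied to $u_k$ with right-hand side in $L^{q_0}$. The only, inessential, deviation is in the sub-step bounding $\sup_{\Omega\cap B_{2R}}\varphi_k$: you re-derive it by hand (via $c_k\leq M$, zero extension of $(\varphi_k-M)^+$ across $\partial\Omega$ and Moser iteration from an $L^1$ bound), whereas the paper simply invokes Theorem \ref{T5} for $\varphi_k$ --- itself proved by reflection, i.e.\ the same underlying idea --- using the data $\varphi_k=c_k$ on $\partial\Omega\cap\overline{B_{4R}}$, $\varphi_k\geq c_k$, and the bound $\|\varphi_k^+\|_{L^N}\leq\|u_k^+\|_{L^N}<+\infty$.
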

\begin{proof}
Let $\varphi_k$ be the $N-$harmonic function in $\Omega \cap B_{4R}$ so that $\varphi_k=u_k$ on $\partial(\Omega \cap B_{4R})$. Choosing 
$$\alpha \in \left( (\sup_k \int_{\Omega \cap B_{4R}} V_k e^{u_k})^{\frac{1}{N-1}},\alpha_N\right)$$ 
in view of \eqref{E12}, by Lemma \ref{T4} we get that $e^{|u_k-\varphi_k|}$ is uniformly bounded in $L^q(\Omega \cap B_{4R})$, for some $q>1$. Since $V_k \geq 0$, by the weak comparison principle we get that $c_k\leq \varphi_k \leq u_k$ in $\Omega \cap B_{4R}$. Since $\varphi_k=c_k$ on $\partial \Omega \cap \overline{B_{4R}}$ and
\begin{equation}\label{1507}
\sup_k \|\varphi_k^+\|_{L^N (\Omega\cap B_{4R})}\leq \sup_k \|u^{+}_{k}\|_{L^{N}(\Omega\cap B_{4R})}<+\infty
\end{equation}
in view of \eqref{E15} and \eqref{E12}, by Theorem \ref{T5} we get that $\varphi_k \leq C_0$ in $\Omega \cap B_{2R}$ uniformly in $k$, for some $C_0$. Since $e^{u_k}\leq e^{C_0} e^{|u_k-\varphi_k|}$, we get that $e^{u_k}$ is uniformly bounded in $L^q(\Omega \cap B_{2R})$. Since $q>1$, by Theorem \ref{T5} we deduce the validity of \eqref{1540} in view of \eqref{1507}.
\end{proof}
\noindent We can now prove our first main result:

\medskip \noindent \emph{Proof (of Theorem \ref{T1}).}\\
First of all, by \eqref{E15} for $V_k$ and \eqref{E21913} we deduce that $V_{k}e^{u_{k}}$ is uniformly bounded in $L^{1}(\Omega)$. Up to a subsequence, by the Prokhorov Theorem we can assume that $V_{k}e^{u_{k}}\rightharpoonup\mu\in\mathcal{M}^{+}(\overline{\Omega})$ as $k\rightarrow+\infty$ in the sense of measures in $\overline{\Omega}$, i.e.
$$\int_\Omega V_k e^{u_k} \varphi \to \int_\Omega \varphi d\mu \hbox{ as }k \to +\infty \qquad \forall \ \varphi \in C(\overline{\Omega}).$$
A point $p\in\overline{\Omega}$ is said a \emph{regular point} for $\mu$ if $\mu(\{p \})<N^N \omega_{N}$, and let us denote the set of non-regular points as:
$$\Sigma=\{p \in \overline{\Omega}:\, \mu(\{p \})\geq N^N \omega_{N} \}.$$
Since $\mu$ is a bounded measure, it follows that $\Sigma$ is a finite set. We complete the argument through the following five steps.

\medskip \noindent \textbf{Step 1}  Letting $$S=\big\{p \in\overline{\Omega}:\ \limsup_{k\rightarrow+\infty}\sup_{\Omega \cap B_{R}(p)}u_{k}=+\infty\ \forall R>0\big\},$$ 
there holds $S\cap\Omega=\Sigma\cap\Omega$ ($S=\Sigma$ if $\emph{osc}_{\partial\Omega}u_{k}=0$ for all $k$). 

\medskip \noindent Letting $p_0 \in S$, assume that $p_0 \in \Omega$ or $u_k=c_k$ on $\partial\Omega$ for some $c_k \in \mathbb{R}$. In the latter case, notice that $u_k \geq c_k$ in $\Omega$ in view of the weak comparison principle. Setting
$$\Sigma'=\bigg\{p \in\overline{\Omega}:\ \mu(\{p \})\geq N^N d_{N}\omega_{N} \bigg\},$$
by Lemma \ref{T6} we know that $p_0 \in\Sigma'$. Indeed, if $p_{0} \notin \Sigma'$, then \eqref{E12} would hold for some $R>0$ small, and then by Lemma \ref{T6} it would follow that $u_{k}$ is uniformly bounded from above in $\Omega\cap B_{R}(p_{0})$, contradicting $p_{0} \in S$. To show that $p_{0}\in\Sigma$, the key point is to recover a good control of $u_{k}$ on $\partial\big(\Omega\cap B_{R}(p_{0})\big),$ for some $R>0$, in order to drop $d_{N}$. Assume that $p_{0}\notin\Sigma$, in such a way that 
\begin{equation} \label{1923}
\sup_k \int_{\Omega\cap B_{2R}(p_{0})}V_{k}e^{u_{k}}< N^N \omega_{N}
\end{equation} for some $R>0$ small. Since $\Sigma'$ is a finite set, up to take $R$ smaller, let us assume that $\partial\big(\Omega\cap B_{2R}(p_{0})\big)\cap\Sigma' \subset \{p_{0}\},$ and then by compactness we have that
\begin{equation}\label{E13qq}
u_{k}\leq M\quad \mbox{in}\ \partial\big(\Omega\cap B_{2R}(p_{0})\big)\setminus \ B_{R}(p_{0})
\end{equation}
in view of $S \cap \Omega \subset \Sigma' \cap \Omega$ and $S \subset \Sigma'$ if $\emph{osc}_{\partial\Omega}u_{k}=0$ for all $k$. If $p_{0}\in\Omega$, we can also assume that $\overline{B_{2R}(p_{0})}\subset \Omega.$ If $p_{0}\in\partial\Omega$, $u_{k}=c_{k}$ on $\partial\Omega$ yields to $c_{k}\leq M$ in view of \eqref{E13qq}. In both cases, we have shown that \eqref{E13qq} does hold in the stronger way:
\begin{equation}\label{E14}
u_{k}\leq M\quad \mbox{in}\ \partial\big(\Omega\cap B_{2R}(p_{0})\big).
\end{equation}
Letting $w_{k} \in W^{1,N}_0(\Omega\cap B_{2R}(p_{0}))$ be the weak solution of
$$
\left\{ \begin{array}{ll}
-\Delta_{N} w_{k}=V_{k}e^{u_{k}}& \mbox{in}\ \Omega\cap B_{2R}(p_{0})\\
w_{k}=0& \mbox{on}\ \partial\big(\Omega\cap B_{2R}(p_{0})\big),\
        \end{array} \right.
\vspace{0,2cm}$$
by \eqref{E14} and the weak comparison principle we get that 
$$u_{k}\leq w_{k}+M\quad \mbox{in}\ \Omega\cap B_{2R}(p_{0}).$$
Applying Lemma \ref{T4} to $w_k$ in view of \eqref{1923}, it follows that 
$$\int_{\Omega\cap B_{2R}(p_{0})}e^{q u_{k}}\leq e^{q M}\int_{\Omega\cap B_{2R}(p_{0})}e^{qw_{k}}\leq C$$
for all $k$, for some $q>1$ and $C>0$. In particular, $u_{k}^{+}$ is uniformly bounded in $L^{N}\big(\Omega\cap B_{2R}(p_{0})\big)$ and $V_{k}e^{u_{k}}$ is uniformly bounded in $L^q \big(\Omega\cap B_{2R}(p_{0})\big)$. By Theorem \ref{T5} it follows that $u_{k}$ is uniformly bounded from above in $\Omega\cap B_{R}(p_{0}),$ in contradiction with $p_{0}\notin S.$ So, we have shown that $p_{0}\in\Sigma$, which yields to $S \cap \Omega \subset \Sigma\cap\Omega$ and $S \subset\Sigma$ if $\emph{osc}_{\partial\Omega}u_{k}=0$ for all $k$.\\ 
Conversely, let $p_{0}\in\Sigma$. If $p_{0}\notin S$, one could find $R_{0}>0$ so that $u_{k}\leq M$ in $\Omega\cap B_{R_{0}}(p_{0}),$ for some $M\in\mathbb{R}$, yielding to $$\int_{\Omega\cap B_{R}(p_{0})}V_{k}e^{u_{k}}\leq C_0 e^{M} R^{N},\ R\leq R_{0},$$ in view of \eqref{E15}. In particular, $\mu(\{p_{0}\})=0$, contradicting $p_{0}\in\Sigma$. Hence $\Sigma\subset S$, and the proof of Step 1 is complete.

\medskip \noindent \textbf{Step 2}
$S \cap \Omega=\emptyset$ ($S=\emptyset$) implies the validity of alternative (i) or (ii) in $\Omega$ (in $\overline{\Omega}$ if $\emph{osc}_{\partial\Omega}u_{k}=0$ for all $k$). 

\medskip \noindent Since $u_{k}$ is uniformly bounded from above in $L^{\infty}_{loc}(\Omega)$, then either $u_{k}$ is uniformly bounded in $L^{\infty}_{loc}(\Omega)$ or there exists, up to a subsequence, a compact set $K\subset\Omega$ so that $\min_{K}u_{k}\rightarrow-\infty$ as $k\rightarrow+\infty$. The set $\Omega_{\delta}=\{x\in\Omega: \mbox{dist}(x,\partial\Omega)\geq \delta\}$ is a compact connected set so that $K\subset \Omega_{\delta}$, for $\delta>0$ small. Since $u_{k}\leq M$ in $\Omega$ for some $M>0$, the function $s_{k}=M-u_{k}$ is a nonnegative weak solution of $-\Delta_{N}s_{k}=-V_{k}e^{u_{k}}$ in $\Omega$. By the Harnack inequality in Theorem \ref{T7}, we have that
$$\max_{\Omega_{\delta}}s_{k}\leq C\big(\min_{\Omega_{\delta}}s_{k}+1 \big)$$ 
in view of
$$\|V_k e^{u_k}\|_{L^\infty(\Omega)} \leq C_0 e^M.$$
In terms of $u_k$, it reads as
$$\max_{\Omega_{\delta}}u_{k}\leq M\big(1-\frac{1}{C}\big)+1+\frac{1}{C}\min_{K}u_{k} \to -\infty$$ 
as $k \to +\infty$ for all $\delta>0$ small, yielding to the validity of alternative (ii) in $\Omega$. Assume in addition that $u_k=c_k$ on $\partial \Omega$ for some $c_k \in \mathbb{R}$. Notice that $c_k \leq u_k \leq M$ in $\Omega$ for all $k$. If alternative (i) does not hold in $\overline{\Omega}$, up to a subsequence, we get that $c_k \to -\infty$. Since $V_k e^{u_k}$ is uniformly bounded in $\Omega$, we apply Corollary \ref{2002} to $s_k=u_k-c_k$, a nonnegative solution of $-\Delta_{N}s_{k}=V_{k}e^{u_{k}}$ with $s_k=0$ on $\partial \Omega$, to get $s_k \leq M'$ in $\Omega$ for some $M' \in \mathbb{R}$. Hence, $u_k \leq M'+c_k \to -\infty$ in $\Omega$ as $k \to +\infty$, yielding to the validity of alternative (ii) in $\overline{\Omega}$. The proof of Step 2 is complete.

\medskip \noindent \textbf{Step 3} $S \cap \Omega \neq \emptyset$ implies the validity of alternative (iii) in $\Omega$ (in $\overline{\Omega}$ if $\emph{osc}_{\partial\Omega}u_{k}=0$ for all $k$) with \eqref{1919} replaced by the property:
\begin{equation} \label{2242}
V_k e^{u_k} \rightharpoonup \sum_{i=1}^m \alpha_i \delta_{p_i}
\end{equation}
weakly in the sense of measures in $\Omega$ (in $\overline{\Omega}$) as $k \to +\infty$, with $\alpha_i \geq N^N \omega_N$ and $S \cap \Omega=\{p_1,\dots,p_m\}$ ($S=\{p_1,\dots,p_m\}$).  

\medskip \noindent Let us first consider the case that $u_{k}$ is uniformly bounded in $L^{\infty}_{loc}(\Omega\setminus S)$. Fix $p_{0}\in S$ and $R>0$ small so that $\overline{B_{R}(p_{0})} \cap S =\{p_{0}\}$. Arguing as in \eqref{E13qq}-\eqref{E14}, we have that $u_{k}\geq m$ on $\partial(\Omega \cap B_{R}(p_{0}))$ for some $m\in\mathbb{R}$. Since $u_k$ is uniformly bounded in $L^\infty_{loc}(\Omega \setminus S)$, by Theorem \ref{T3} it follows that $u_{k}$ is uniformly bounded in $C^{1,\alpha}_{loc}(\overline{\Omega \cap B_R(p_0)} \setminus \{p_0\})$, for some $\alpha\in(0,1)$, and, up to a subsequence and a diagonal process, we can assume that $u_{k}\rightarrow u$ in $C^{1}_{loc}(\overline{\Omega \cap B_R(p_0)} \setminus \{p_0 \} )$ as $k \to +\infty$. By \eqref{E15} on each $V_k$, we can also assume that $V_k \to V$ uniformly in $\Omega$ as $k \to +\infty$. Hence, there holds
\begin{equation} \label{1339} 
V_{k}e^{u_{k}} \rightharpoonup \mu=V e^u \ dx+\alpha_0 \delta_{p_{0}}
\end{equation} 
weakly in the sense of measures in $\overline{\Omega \cap B_R(p_{0})}$ as $k \to +\infty$, where $\alpha_0  \geq N^N \omega_N$. Since
$$\lim_{k\to +\infty} \int_{\Omega \cap B_R(p_0)} V_k e^{u_k}=\int_{\Omega \cap B_R(p_0)} V e^u+\alpha_0>\alpha_0$$
in view of \eqref{1339}, for $k$ large we can find a unique $0<r_k<R$ so that 
\begin{equation} \label{1238} 
\int_{\Omega \cap B_{r_k}(p_0)} V_{k}e^{u_{k}}=\alpha_0.
\end{equation} 
Notice that $r_k \to 0$ as $k \to +\infty$. Indeed, if $r_k \geq \delta>0$ were true along a subsequence, one would reach the contradiction
$$\alpha_0 \geq \int_{\Omega \cap B_\delta(p_0)} V_k e^{u_k} \to \int_{\Omega \cap B_\delta(p_0)} V e^u+\alpha_0>\alpha_0$$
as $k \to +\infty$ in view of \eqref{1339}-\eqref{1238}. Denoting by $\chi_A$ the characteristic function of a set $A$, we have the following crucial property:
$$\chi_{B_{r_k}(p_0)} V_k e^{u_k}  \rightharpoonup \alpha_0 \delta_{p_{0}}$$
weakly in the sense of measures in $\overline{\Omega \cap B_R(p_{0})}$ as $k \to +\infty$, as it easily follows by \eqref{1238} and $\displaystyle \lim_{k \to +\infty} r_k=0$. 

\medskip \noindent We can now specialize the argument to deal with the case $p_0 \in S \cap \Omega$. Assume that $R$ is small so that $\overline{B_{R}(p_{0})} \subset\Omega$. Letting $w_{k} \in W^{1,N}_0(B_R(p_0))$ be the weak solution of
$$\left\{ \begin{array}{ll}
-\Delta_{N} w_{k}=\chi_{B_{r_k}(p_0)} V_{k}e^{u_{k}} & \mbox{in}\ B_{R}(p_{0})\\
w_{k}=0& \mbox{on}\ \partial B_{R}(p_{0}),\
        \end{array} \right. $$
by the weak comparison principle there holds $0\leq w_{k} \leq u_{k}-m$ in $B_{R}(p_{0})$ in view of $0\leq \chi_{B_{r_k}(p_0)}  V_k e^{u_k} \leq V_k e^{u_k}$. Arguing as before, up to a subsequence, by Theorem \ref{T3}  we can assume that $w_k \to w$ in $C^{1}_{loc}(\overline{B_R(p_0)} \setminus \{p_0 \} )$ as $k \to +\infty$, where $w\geq 0$ is a $N-$harmonic and continous function in $B_R(p_0) \setminus \{p_0\}$ which solves
$$-\Delta_N w=\alpha_0 \delta_{p_0 } \quad \hbox{in }B_R(p_0)$$
in a distributional sense. By Theorem \ref{T2} we deduce that 
\begin{equation} \label{1406}
w \geq (N\omega_{N})^{-\frac{1}{N-1}}  \alpha_0^{\frac{1}{N-1}}  \log \frac{1}{|x-p_0|}+C\geq  N \log \frac{1}{|x-p_0|}+C \qquad \hbox{in }B_r(p_0)
\end{equation}
in view of $\alpha_0 \geq N^N \omega_N$, for some $C \in \mathbb{R}$ and $0<r\leq \min\{1,R\}$. Since
$$\int_{B_{R}(p_{0})}e^{w_{k}}\leq e^{-m}\sup_k \int_{\Omega}e^{u_{k}}<+\infty$$
in view of \eqref{E21913}, as $k \to +\infty$ we get that $\int_{B_{R}(p_{0})}e^{w}<+\infty$, in contradiction with \eqref{1406}:
$$\int_{B_{R}(p_{0})}e^w \geq e^C \int_{B_r(p_{0})}\frac{1}{|x-p_{0}|^{N}}=+\infty.$$ 
Since $u_{k}$ is uniformly bounded from above and not from below in $L^{\infty}_{loc}(\Omega\setminus S)$, there exists, up to a subsequence, a compact set $K\subset\Omega \setminus S$ so that $\min_{K} u_{k}\rightarrow-\infty$ as $k\rightarrow+\infty$.  Arguing as in Step 2 by simply replacing $\mbox{dist}(\cdot,\partial\Omega)$ with $\mbox{dist}(\cdot,\partial\Omega\cap S)$, we can show that $u_k \to -\infty$ in $L^\infty_{loc}(\Omega \setminus S)$ as $k \to +\infty$, and \eqref{2242} does hold in $\Omega$ with $\{p_1,\dots,p_m\}=S\cap \Omega$. If in addition $u_k=c_k$ on $\partial \Omega$ for some $c_k \in \mathbb{R}$, we can argue as in the end of Step 2 (by using Theorem \ref{T7} instead of Corollary \ref{2002}) to get that $u_k \to -\infty$ in $L^\infty_{loc}(\overline{\Omega} \setminus S)$ as $k \to +\infty$, yielding to the validity of \eqref{2242} in $\overline{\Omega}$ with $\{p_1,\dots,p_m\}=S$. The proof of Step 3 is complete.

\medskip \noindent To proceed further we make use of Pohozaev identities. Let us emphasize that $u_k \in C^{1,\alpha}(\overline{\Omega})$, $\alpha \in (0,1)$, and the classical Pohozaev identities usually require more regularity. In \cite{18} a self-contained proof  is provided in the quasilinear case, which reads in our case as:
\begin{lm} \label{Po}
Let $\Omega \subseteq\mathbb{R}^{N}$, $N\geq2$, be a smooth bounded domain, $f$ be a locally Lipschitz continuous function and $0\leq V\in C^{1}(\overline{\Omega})$. 
Then, there holds
$$
\int_{\Omega}\big[N\ V+\langle x-y,\nabla V\rangle\big]F(u)=\int_{\partial\Omega}V\ F(u)\langle x-y,\nu\rangle
+|\nabla u|^{N-2}\langle x-y,\nabla u\rangle \partial_{\nu}u-\frac{|\nabla u|^{N}}{N}\langle x-y,\nu\rangle$$
for all weak solution $u\in C^{1,\alpha}(\overline{\Omega})$, $\alpha \in (0,1)$, of $-\Delta_N u=V f(u)$ in $\Omega$ and all $y\in\mathbb{R}^{N}$, where $F(t)=\displaystyle\int_{-\infty}^{t}f(s)ds$ and $\nu$ is the unit outward normal vector at $\partial \Omega$.
\end{lm}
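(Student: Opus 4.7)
\medskip \noindent \emph{Plan of proof.} My plan is to derive the identity by multiplying the equation $-\Delta_N u=Vf(u)$ by the Pohozaev multiplier $\langle x-y,\nabla u\rangle$, integrating over $\Omega$, and reorganizing both sides via integration by parts. On the right-hand side, using the chain-rule identity $f(u)\langle x-y,\nabla u\rangle=\langle x-y,\nabla F(u)\rangle$ together with $\mathrm{div}(V(x-y))=NV+\langle x-y,\nabla V\rangle$, a single divergence theorem immediately gives
\begin{equation*}
\int_\Omega Vf(u)\langle x-y,\nabla u\rangle\,dx=\int_{\partial\Omega}VF(u)\langle x-y,\nu\rangle\,d\sigma-\int_\Omega[NV+\langle x-y,\nabla V\rangle]F(u)\,dx.
\end{equation*}

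\medskip \noindent On the left-hand side, a first integration by parts produces the boundary contribution $-\int_{\partial\Omega}|\nabla u|^{N-2}\partial_\nu u\,\langle x-y,\nabla u\rangle\,d\sigma$ plus the interior term $\int_\Omega|\nabla u|^{N-2}\nabla u\cdot\nabla\langle x-y,\nabla u\rangle\,dx$. Since $\nabla\langle x-y,\nabla u\rangle=\nabla u+D^2u\,(x-y)$ and, by symmetry of $D^2u$, one has the algebraic identity $N|\nabla u|^{N-2}\nabla u\cdot D^2u\,(x-y)=\langle x-y,\nabla|\nabla u|^N\rangle$, this interior term splits as $\int_\Omega|\nabla u|^N\,dx+\tfrac{1}{N}\int_\Omega\langle x-y,\nabla|\nabla u|^N\rangle\,dx$; a second integration by parts on the latter piece, using $\mathrm{div}(x-y)=N$, collapses the two interior integrals and leaves only $\tfrac{1}{N}\int_{\partial\Omega}|\nabla u|^N\langle x-y,\nu\rangle\,d\sigma$. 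Matching the two sides yields the stated identity.

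\medskip \noindent The main obstacle is regularity: since we only know $u\in C^{1,\alpha}(\overline{\Omega})$, the quantity $\nabla|\nabla u|^N$ appearing in the second integration by parts is not classically defined, and even the test field $\langle x-y,\nabla u\rangle$ need not belong to $W^{1,N}$. Following the scheme of \cite{18}, I would regularize the operator in a uniformly elliptic way, for instance by replacing $|\nabla u|^{N-2}$ with $(|\nabla u|^2+\varepsilon)^{(N-2)/2}$, and work with the $C^2$ solutions $u_\varepsilon$ of the perturbed equation sharing the same boundary trace as $u$; for such $u_\varepsilon$ the Pohozaev computation above is entirely classical. Uniform Tolksdorf-type $C^{1,\alpha}(\overline{\Omega})$ estimates for the family $\{u_\varepsilon\}$ then allow one to pass to the limit $\varepsilon\to 0$ in every term of the identity, both the interior pieces (by dominated convergence) and the boundary pieces (by $C^1$ convergence up to $\partial\Omega$), recovering the identity for $u$ itself.
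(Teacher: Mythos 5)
The paper offers no proof of this lemma at all: it is quoted as a ready-made result from \cite{18}, so there is no internal argument to match yours against. Your formal computation is correct (both the divergence identity for the right-hand side and the splitting of $\int_\Omega|\nabla u|^{N-2}\nabla u\cdot\nabla\langle x-y,\nabla u\rangle$), and ``regularize, apply the classical identity, pass to the limit'' is a legitimate route. But the approximation scheme as you describe it has a genuine gap. If the perturbed problem is the fully nonlinear one, $-\mbox{div}\big((|\nabla u_\varepsilon|^2+\varepsilon)^{\frac{N-2}{2}}\nabla u_\varepsilon\big)=Vf(u_\varepsilon)$ with the same boundary trace, then existence of $u_\varepsilon$ is not clear, and more importantly there is no reason whatsoever that $u_\varepsilon\to u$: the Dirichlet problem for $-\Delta_N u=Vf(u)$ is in general not uniquely solvable, so a convergent subsequence could perfectly well land on a different solution. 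The standard repair is to freeze the right-hand side, i.e.\ to solve the strictly monotone problem $-\mbox{div}\big((|\nabla u_\varepsilon|^2+\varepsilon)^{\frac{N-2}{2}}\nabla u_\varepsilon\big)=Vf(u)$ with $u_\varepsilon-u\in W^{1,N}_0(\Omega)$; then $u_\varepsilon$ is unique, converges to $u$ in $W^{1,N}$ by monotonicity and in $C^1_{loc}(\overline\Omega)$ by the uniform Tolksdorf--Lieberman estimates, and the multiplier term $\int_\Omega Vf(u)\langle x-y,\nabla u_\varepsilon\rangle$ passes to the limit $\int_\Omega V\langle x-y,\nabla F(u)\rangle$, which may then be integrated by parts directly because $F(u)\in C^1(\overline\Omega)$.

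Two further points you should not leave implicit. First, $u_\varepsilon\in C^2(\overline\Omega)$ is not automatic: the boundary datum $u|_{\partial\Omega}$ is only $C^{1,\alpha}$, so the regularized solution is $C^{2,\alpha}$ only in the interior, and since the Pohozaev identity lives on the boundary you must run the classical computation on an exhaustion $\Omega_\delta\uparrow\Omega$ and let $\delta\to0$ using $C^1(\overline\Omega)$ bounds on the boundary integrands (note also that the regularized identity carries $(|\nabla u_\varepsilon|^2+\varepsilon)^{N/2}$ in place of $|\nabla u_\varepsilon|^{N}$, which is harmless in the limit). Second, for comparison, the self-contained proof in \cite{18} that the authors invoke takes a different route: it works directly with $u$ and justifies the integration by parts via the second-order regularity theory for the $N$-Laplacian (essentially $|\nabla u|^{N-2}\nabla u\in W^{1,2}_{loc}$ and weighted $L^2$ control of $D^2u$ away from the critical set $\{\nabla u=0\}$), rather than by an elliptic regularization of the operator. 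Both strategies work, but each needs the technical input sketched above, which your proposal currently glosses over.
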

\noindent Thanks to Lemma \ref{Po}, in the next two Steps we can now describe the interior blow-up phenomenon and exclude the occurence of boundary blow-up:

\medskip \noindent \textbf{Step 4} If $\emph{osc}_{\partial\Omega}u_{k}\leq M$ for some $M \in \mathbb{R}$, then $\alpha_i=c_N$ for all $p_i \in S \cap \Omega$. 

\medskip \noindent Since $0\leq u_{k}-\inf_{\partial\Omega}u_{k}\leq M$ on $\partial\Omega$, we have that $s_{k}=u_{k}-\inf_{\partial\Omega}u_{k}\geq 0$ satisfies
$$
\left\{ \begin{array}{ll}
-\Delta_{N} s_{k}= W_k e^{s_k}& \mbox{in}\ \Omega\\
0\leq s_{k}\leq M& \mbox{on}\ \partial\Omega,\
        \end{array} \right.$$
where $W_{k}=V_{k}e^{\inf_{\partial\Omega}u_{k}}$. Letting now $\varphi_{k}$ be the $N-$harmonic function in $\Omega$ with $\varphi_{k}=s_{k}$ on $\partial\Omega$, by the weak comparison principle we have that $0\leq \varphi_{k}\leq M$ in $\Omega$. Since $\sup_k \int_\Omega W_k e^{s_k}<+\infty$ and $e^{\gamma s}\geq\delta s^{N}$ for all $s\geq 0$, for some $\delta>0$, by Lemma \ref{T4} we deduce that $s_k- \varphi_k$ and then $s_k$ are uniformly bounded in $L^N(\Omega)$.
Since $W_k e^{s_k}=V_k e^{u_k}$ is uniformly bounded in $L_{loc}^{\infty}(\overline{\Omega} \setminus S)$, by Theorem \ref{T3} it follows as in Step 3 that, up to a subsequence, $s_{k} \rightarrow s$ in $C^{1}_{loc}(\Omega \setminus S )$. Fix $p_0 \in S\cap \Omega$ and take $R_0>0$ small so that $B=B_{R_0}(p_0)\subset\subset\Omega$ and $\overline{B} \cap S=\{p_0\}$. The limiting function $s \geq0$ is a $N$-harmonic and continuous function in $B \setminus\{p_0\}$ which solves
$$-\Delta_N s=\alpha_0 \delta_{p_0} \quad \hbox{in }B,$$
where $\alpha_0 \geq N^N \omega_N$. By Theorem \ref{T2} we have that $s=\alpha_0^{\frac{1}{N-1}} \Gamma(|x-p_0|)+H$, where $H \in L^\infty_{loc}(B)$ does satisfy
\begin{equation}\label{E17}
\lim_{x\to p_0}|x-p_0||\nabla H(x)|=0.
\end{equation}
Applying the Pohozaev identity to $s_{k}$ on $B_R(p_0)$, $0<R\leq R_0$, with $y=p_0$, we get that
$$\int_{B_R(p_0)} \big[NW_{k}+\langle x-p_0,\nabla W_{k}\rangle\big]e^{s_{k}}=R \int_{\partial B_R(p_0)}\left[W_{k}e^{s_{k}}+|\nabla s_{k}|^{N-2}(\partial_{\nu}s_{k})^{2}-\frac{|\nabla s_{k}|^{N}}{N}\right].$$
Since $S \cap \Omega \neq \emptyset$ and $V_{k}e^{u_{k}}=W_{k}e^{s_{k}}$,
by Step 3 we get that $\int_{\partial B_R(p_0)}W_{k}e^{s_{k}} \to 0$ and
$$\int_{B_R(p_0)}\big[NW_{k}+\langle x-p_0,\nabla W_{k}\rangle\big] e^{s_{k}}=N \int_{B_R(p_0)}V_{k}e^{u_{k}}+O\left(\int_{B_R(p_0)}|x-p_0| V_{k}e^{u_{k}}\right) \to N\alpha_0$$
as $k\to +\infty$. Letting $k\to \infty$ we get that
\begin{eqnarray*}
N \alpha_0&=&R \int_{\partial B_R(p_0)} |\nabla H-(\frac{\alpha_0}{N\omega_{N}})^{\frac{1}{N-1}} \frac{x-p_0}{|x-p_0|^{2}}|^{N-2}[\partial_{\nu}H-(\frac{\alpha_0}{N\omega_{N}})^{\frac{1}{N-1}}\frac{1}{|x-p_0|}]^{2}\\
&&-\frac{R}{N}  \int_{\partial B_R(p_0)} |\nabla H-(\frac{\alpha_0}{N\omega_{N}})^{\frac{1}{N-1}}\frac{x-p_0}{|x-p_0|^{2}}|^{N}\\
&=& R \frac{N-1}{N} \int_{\partial B_R(p_0)} \left[ (\frac{\alpha_0}{N\omega_{N}})^{\frac{2}{N-1}}\frac{1}{|x-p_0|^{2}}+O(\frac{1}{|x-p_0|}|\nabla H|+|\nabla H |^{2})\right]^{\frac{N}{2}}\\
&=& R \frac{N-1}{N}\int_{\partial B_R(p_0)} (\frac{\alpha_0}{N\omega_{N}})^{\frac{N}{N-1}} \frac{1}{|x-p_0|^{N}} \left[1+O(|x-p_0||\nabla H|+|x-p_0|^{2}|\nabla H|^{2})\right]
\end{eqnarray*}
in view of $s_{k}\to s=\alpha_0^{\frac{1}{N-1}} \Gamma(|x-p_0|)+H$ in $C^{1}_{loc}(\overline{B}\setminus\{p_0\})$ as $k\rightarrow+\infty$. Letting $R \to 0$ we get that
$$N \alpha_0=\frac{N-1}{N} (\frac{\alpha_0}{N\omega_{N}})^{\frac{N}{N-1}} N\omega_{N},
$$
in view of \eqref{E17}. Therefore, there holds 
$$\alpha_0 =N\big(\frac{N^{2}}{N-1}\big)^{N-1}\omega_{N}=c_{N}$$
for all $p_0 \in S \cap \Omega$, and the proof of Step 4 is complete.

\medskip \noindent \textbf{Step 5} If $\emph{osc}_{\partial\Omega}u_{k}=0$ for all $k$, then $S \subset \Omega$.

\medskip \noindent Assume now that $u_k=c_k$ on $\partial \Omega$. Since by the weak comparison principle $c_k \leq u_k$ in $\Omega$ for all $k$, the function $s_{k}=u_{k}-c_{k}$ is a nonnegative weak solution of
$$\left\{ \begin{array}{ll}
-\Delta_{N} s_{k}=W_{k}e^{s_{k}}& \mbox{in}\ \Omega\\
s_{k}=0& \mbox{on}\ \partial\Omega,\
        \end{array} \right.$$
where $W_{k}=V_{k}e^{c_{k}}$. Since $W_k e^{s_k}= V_k e^{u_k}$ is uniformly bounded in $L^1(\Omega)$, by Lemma \ref{T4} we have that $s_{k}$ is uniformly bounded in $L^{N}(\Omega)$. Since $W_k e^{s_k}=V_k e^{u_k}$ is uniformly bounded in $L_{loc}^{\infty}(\overline{\Omega} \setminus S)$, arguing as in Step 3, by Theorem \ref{T3} it follows that $s_k$ is uniformly bounded in $C^{1,\alpha}_{loc}(\overline{\Omega} \setminus S )$, $\alpha \in (0,1)$, and, up to a subsequence, $s_{k} \rightarrow s$ in $C^{1}_{loc}(\overline{\Omega} \setminus S )$. We claim that $ s \in C^1(\overline{\Omega})$.\\
If $c_k \to -\infty$, we have that $s \in C^{1}_{loc}(\overline{\Omega}\setminus S )$ is a nonnegative $N$-harmonic function in $\Omega \setminus S$ with $s=0$ on $\partial \Omega \setminus S$. By Theorem \ref{T7} we deduce that $s=0$ in $\Omega$, and then $s \in C^1(\overline{\Omega})$. Up to a subsequence, we can now assume that $c_k \to c \in \mathbb{R}$ as $k \to +\infty$ and $S=\{p_1,\dots,p_m\} \subset \partial \Omega$ in view of Step 3. By \cite{8,9} $s \in W^{1,q}_0(\Omega)$ for all $q<N$ and is a distributional solution of
\begin{equation} \label{1417}
\left\{ \begin{array}{ll}
-\Delta_{N} s=W e^s& \mbox{in}\ \Omega\\
s=0& \mbox{on}\ \partial\Omega\
        \end{array} \right.
\end{equation}
(referred to as SOLA, Solution Obtained as Limit of Approximations), where $W=V e^c$ and $We^s \in L^1(\Omega)$. By considering different $L^1-$approximations or even $L^1-$weak approximations of $We^s \in L^1(\Omega)$ one always get the same limiting SOLA \cite{Dall}, which is then unique in the sense explained right now. Unfortunately, the sequence $W_k e^{s_k}$ does not converge $L^1-$weak to $We^s$  as $k \to +\infty$ since it keeps track that some mass is concentrating near the boundary points $p_1,\dots,p_m$. Given $p=p_i \in S$ and $\alpha=\alpha_i$, arguing as in \eqref{1238} we can find a radius $r_k \to 0$ as $k \to +\infty$ so that
\begin{equation} \label{0939}
\int_{\Omega \cap B_{r_k}(p)} W_{k}e^{s_{k}}=\alpha.
\end{equation}
Let $w_{k} \in W^{1,N}_0(\Omega \cap B_R(p))$ be the weak solution of
$$\left\{ \begin{array}{ll}
-\Delta_{N} w_{k}=\chi_{\Omega \cap B_{r_k}(p)} W_{k}e^{s_{k}} & \mbox{in }\Omega \cap  B_{R}(p)\\
w_{k}=0& \mbox{on}\  \partial(\Omega \cap  B_{R}(p)),
        \end{array} \right. $$
where $R<\frac{1}{2} \hbox{dist}\ (p,S \setminus \{p\})$. Arguing as in Step 3, up to a subsequence, we have that $w_k \to w$ in $C^{1}_{loc}(\overline{\Omega \cap B_R(p)} \setminus \{p \} )$ as $k \to +\infty$, where $w\geq 0$ is $N-$harmonic and continous in $\overline{\Omega \cap B_R(p)} \setminus \{p \}$. If $w>0$ in $\Omega \cap B_R(p)$, by \cite{BBV,BoVe} we have that
\begin{equation} \label{2123}
\lim_{r \to 0^+}   r w(\sigma r+p)=-\langle \sigma,\nu(p)\rangle
\end{equation}
uniformly for $\sigma$ with $\langle \sigma,\nu(p)\rangle\leq -\delta<0$. Thanks to \eqref{2123}, as in Step 3 we still end up with the contradiction $\int_{\Omega \cap B_R(p)} e^w=+\infty$. Therefore, by the strong maximum principle we necessarily have that $w=0$ in $\Omega \cap B_R(p)$. Since $w_k$ is the part of $s_k$ which carries the information on the concentration phenomenon at $p$ and tends to disappear as $k \to +\infty$, we can expect that $s_k$ in the limit does not develop any singularities. We aim to show that $e^s \in L^q(\Omega \cap B_R(p))$ for all $q \geq 1$, by mimicking some arguments in \cite{3}. Letting $\varphi_k$ be the $N-$harmonic extension in $\Omega \cap B_R(p)$ of $s_k \mid_{\partial (\Omega \cap B_R(p))}$, for $M,a>0$ we have that
\begin{eqnarray} 
&&\int_{\Omega \cap B_R(p)} \langle |\nabla s_k|^{N-2}\nabla s_k -|\nabla w_k|^{N-2} \nabla w_k-|\nabla \varphi_k|^{N-2}\nabla \varphi_k, \nabla[ T_{M+a}(s_k-w_k-\varphi_k)-
 T_M (s_k-w_k-\varphi_k)]\rangle \nonumber \\ 
&&= \int_{\Omega \cap B_R(p)} (1-\chi_{\Omega \cap B_{r_k}(p)}) W_k e^{s_k} [ T_{M+a}(s_k-w_k-\varphi_k)-
 T_M (s_k-w_k-\varphi_k)] \nonumber \\
&&\leq  a \int_{\{|s_k-w_k-\varphi_k|>M \}} (1-\chi_{\Omega \cap B_{r_k}(p)}) W_k e^{s_k}, \label{1528}
\end{eqnarray}
where the truncature operator $T_M$, $M>0$, is defined as 
$$T_M(u)=\left\{ \begin{array}{ll} -M & \hbox{if }u<-M \\ u &\hbox{if }|u|\leq M\\ M &\hbox{if }u>M. \end{array} \right.$$
The crucial property we will take advantage of is the following:
\begin{equation} \label{1527}
\sup_k \int_{\{|s_k-w_k-\varphi_k|>M \}} (1-\chi_{\Omega \cap B_{r_k}(p)}) W_k e^{s_k} \to 0 \qquad \hbox{as }M \to +\infty.
\end{equation}
Indeed, by \cite{34} notice that, up to a subsequence, we can assume that $\varphi_k \to \varphi$ in $C^1(\overline{\Omega \cap B_R(p)})$ as $k \to +\infty$, where $\varphi$ is the $N-$harmonic function in $\Omega \cap B_R(p)$ with $\varphi=s$ on $\partial (\Omega \cap B_R(p))$. Since $s_k-w_k-\varphi_k \to s-\varphi$ uniformly in $\Omega \cap (B_R(p) \setminus B_r(p))$ as $k \to +\infty$ for any given $r \in (0,R)$, we can find $M_r>0$ large so that 
$$\cup_k \{|s_k-w_k-\varphi_k|>M \} \subset \Omega \cap B_r(p)\qquad \forall\ M\geq M_r,$$
and then
$$\sup_k \int_{\{|s_k-w_k-\varphi_k|>M \}} (1-\chi_{\Omega \cap B_{r_k}(p)}) W_k e^{s_k} \leq \sup_k \int_{\Omega \cap B_r(p)} (1-\chi_{\Omega \cap B_{r_k}(p)}) W_k e^{s_k}$$
for all $M\geq M_r$. Since by \eqref{1339} and \eqref{0939} 
$$\int_{\Omega \cap B_r(p)} (1-\chi_{\Omega \cap B_{r_k}(p)}) W_k e^{s_k}\to \int_{\Omega \cap B_r(p)} W e^s$$
as $k \to +\infty$ and $We^s \in L^1(\Omega)$, for all $\epsilon>0$ we can find $r_\epsilon>0$ small so that
$$\sup_k \int_{\Omega \cap B_{r_\epsilon}(p)} (1-\chi_{\Omega \cap B_{r_k}(p)}) W_k e^{s_k} \leq \epsilon,$$
yielding to the validity of \eqref{1527}. Inserting \eqref{1527} into \eqref{1528} we get that, for all $\epsilon>0$, there exists $M_\epsilon$ so that
\begin{equation} \label{1529}
\int_{\{M< |s_k-w_k-\varphi_k|\leq M+a\} } \langle |\nabla s_k|^{N-2}\nabla s_k -|\nabla w_k|^{N-2} \nabla w_k-|\nabla \varphi_k|^{N-2}\nabla \varphi_k, \nabla(s_k-w_k-\varphi_k)\rangle \leq a \epsilon
\end{equation}
for all $M\geq M_\epsilon$ and $a>0$. Recall that $w_k \to 0$, $s_k \to s$ in $C^{1}_{loc}(\overline{\Omega \cap B_R(p)} \setminus \{p \} )$ and in $W^{1,q}(\Omega \cap B_R(p))$ for all $q<N$ as $k \to +\infty$  in view of \cite{8,9}.  Since 
$$\langle |\nabla s_k|^{N-2}\nabla s_k -|\nabla w_k|^{N-2} \nabla w_k, \nabla(s_k-w_k)\rangle \geq 0$$
and $\nabla \varphi_k$ behaves well, we can let $k \to +\infty$ in \eqref{1529} and by the Fatou Lemma get
\begin{equation} \label{1618}
\frac{d_N}{a} \int_{\{M< |s-\varphi|\leq M+a\} } |\nabla (s-\varphi)|^N \leq \frac{1}{a} \int_{\{M< |s-\varphi|\leq M+a\} } \langle |\nabla s|^{N-2}\nabla s -|\nabla \varphi|^{N-2}\nabla \varphi, \nabla(s-\varphi)\rangle \leq \epsilon
\end{equation}
for some $d_N>0$ and all $M\geq M_\epsilon$. 
Introducing $H_{M,a}(s)=\frac{T_{M+a}(s-\varphi)-T_M(s-\varphi)}{a}$ and the distribution $\Phi_{s-\varphi}(M)= |\{x \in \Omega \cap B_R(p):|s-\varphi|(x)>M\}$ of $|s-\varphi|$, we have that
\begin{eqnarray*}
\Phi_{s-\varphi}(M+a)^{\frac{N-1}{N}} &\leq& \left(\int_{\Omega \cap B_R(p)} |H_{M,a}(s)|^{\frac{N}{N-1}}\right)^{\frac{N-1}{N}}\leq (N^N \omega_N)^{-\frac{1}{N}} 
\int_{\Omega \cap B_R(p)} |\nabla H_{M,a}(s)|\\
& \leq& (N^N \omega_N)^{-\frac{1}{N}} \frac{1}{a} \int_{\{M< |s-\varphi| \leq M+a\} } |\nabla (s-\varphi)|
\end{eqnarray*}
in view of the Sobolev embedding $W^{1,1}_0(\Omega \cap B_R(p)) \hookrightarrow L^{\frac{N}{N-1}}(\Omega \cap B_R(p))$ with sharp constant $(N^N \omega_N)^{-\frac{1}{N}}$, see \cite{FeFl}. By the H\"older inequality and \eqref{1618} we then deduce that
$$\Phi_{s-\varphi}(M+a) \leq (\frac{N^N d_N \omega_N }{\epsilon} )^{-\frac{1}{N-1}} \frac{\Phi_{s-\varphi}(M)-\Phi_{s-\varphi}(M+a)}{a}$$
for all $M \geq M_\epsilon$. By letting $a \to 0^+$ it follows that
$$\Phi_{s-\varphi}(M) \leq -(\frac{N^N d_N \omega_N}{\epsilon} )^{-\frac{1}{N-1}} \Phi_{s-\varphi}'(M)$$
for a.e. $M\geq M_\epsilon$, and by integration in $(M_\epsilon,M)$ 
$$\Phi_{s-\varphi}(M)\leq |\Omega \cap B_R(p)| \exp\left[- (\frac{N^N d_N \omega_N}{\epsilon})^{\frac{1}{N-1}} M\right]$$
for all $M\geq M_\epsilon$, in view of $\Phi_{s-\varphi}(M_\epsilon)\leq |\Omega \cap B_R(p)|$. Given $q \geq 1$ we can argue as follows:
\begin{eqnarray*} && \int_{\Omega\cap B_R(p)}e^{q |s-\varphi|}-|\Omega \cap B_R(p)|=q \int_{\Omega \cap B_R(p)} dx \int_0^{|s(x)-\varphi(x)|} e^{q M} dM=q \int_0^{\infty} e^{q M} \Phi_{s-\varphi}(M) dM\\
&& \leq |\Omega \cap B_R(p)| \left[ e^{q M_\epsilon}+q \int_{M_\epsilon}^{\infty} \exp \left( (q-(\frac{N^N d_N \omega_N}{\epsilon})^{\frac{1}{N-1}}) M \right) \right] dM<+\infty
\end{eqnarray*}
by taking $\epsilon$ sufficiently small. Since $\varphi \in C^1(\overline{\Omega \cap B_R(p)})$, we get that $e^s$ is a $L^q-$function near any $p \in S$, and then $e^s \in L^q(\Omega)$ for all $q\geq 1$. By the uniqueness result in \cite{DHM} and by Theorems \ref{T5}, \ref{T3}  we get that $s \in C^{1,\alpha}(\overline{\Omega})$, for some $\alpha \in (0,1)$.
\begin{oss}  The proof of $s \in C^{1,\alpha}(\overline{\Omega})$, $\alpha \in (0,1)$,  might be carried over in a shorter way. Indeed, the function $We^s \in L^1(\Omega)$ can be approximated either in a strong $L^1-$sense or in a weak measure-sense. In the former case, the limiting function $z$ is an entropy solution of
$$\left\{ \begin{array}{ll} -\Delta_N z=We^s &\hbox{in }\Omega\\
z=0&\hbox{on }\partial \Omega, \end{array} \right.$$
while in the latter we end up with $s$ by choosing $W_k e^{s_k}$ as the approximation in measure-sense. As consequence of the impressive uniqueness result in \cite{DHM}, $s=z$ and then $s$ is a entropy solution of \eqref{1417} (see \cite{3,7} for the definition of entropy solution). Lemma \ref{T4} is proved in \cite{3} for entropy solutions, and has been used there, among other things, to show that a entropy solution $s$ of \eqref{1417} is necessarily in $C^{1,\alpha}(\overline{\Omega})$, for some $\alpha \in (0,1)$. We have preferred a longer proof to give a self-contained argument which does not require to introduce special notions of distributional solutions (like SOLA, entropy and renormalized solutions, just to quote some of them).
\end{oss}

\medskip \noindent Fix any $p_{0}\in\partial\Omega$ and take $R_0>0$ small so that $\overline{B_{R_0}(p_{0})} \cap S= \{p_{0}\}$. Setting $y_{k}=p_{0}+\rho_{k,R} \nu(p_{0})$ with $0<R\leq R_0$ and
$$\rho_{k,R}=\frac{\displaystyle\int_{\partial\Omega\cap B_R(p_{0})}\langle x-p_{0},\nu \rangle|\nabla u_{k}|^{N}}{\displaystyle\int_{\partial\Omega\cap B_{R}(p_{0})}\langle\nu(p_{0}),\nu \rangle|\nabla u_{k}|^{N}},$$
we have that
\begin{equation}\label{E19}
\int_{\partial\Omega\cap B_R(p_{0})}\langle x-y_{k}, \nu \rangle|\nabla u_{k}|^{N}=0.
\end{equation}
Up to take $R_0$ smaller, we can assume that $|\rho_{k,R}|\leq 2R$. Applying Lemma \ref{Po} to $s_{k}$ on $\Omega\cap B_R(p_{0})$ with $y=y_{k}$, we obtain that
\begin{eqnarray}
\int_{\Omega\cap B_R(p_{0})}[NW_{k}+\langle x-y_{k},\nabla W_{k}\rangle]e^{s_{k}}&=&\int_{\partial(\Omega\cap B_R(p_{0}))}W_{k}e^{s_{k}}\langle x-y_{k},\nu\rangle \label{E21}\\
&&+\int_{\partial(\Omega\cap B_R(p_{0}))}\bigg[|\nabla s_{k}|^{N-2}\langle x-y_{k},\nabla s_{k}\rangle \partial_\nu s_{k}-\frac{|\nabla s_{k}|^{N}}{N}\langle x-y_{k},\nu\rangle\bigg]. \nonumber
\end{eqnarray}
We would like to let $k\rightarrow+\infty$, but $\partial(\Omega\cap B_R(p_{0}))$ contains the portion $\partial\Omega\cap B_R(p_{0})$ where the convergence $s_{k} \to s$ might fail. The clever choice of $\rho_{k,R}$, as illustrated by \eqref{E19}, leads to
\begin{eqnarray*}
\int_{\partial\Omega\cap B_R(p_{0})}\bigg[|\nabla s_{k}|^{N-2}\langle x-y_{k},\nabla s_{k}\rangle \partial_\nu s_{k} -\frac{|\nabla s_{k}|^{N}}{N}\langle x-y_{k},\nu\rangle\bigg]=(1-\frac{1}{N})\int_{\partial\Omega\cap B_R(p_{0})}|\nabla u_{k}|^{N}\langle x-y_{k},\nu \rangle=0
\end{eqnarray*}
in view of $\nabla s_{k}=\nabla u_{k}$ and $\nabla s_{k}=-|\nabla s_{k}| \nu$ on $\partial\Omega$ by means of $s_{k}=0$ on $\partial\Omega$. Hence, \eqref{E21} reduces to
\begin{eqnarray}\label{E22}
N \int_{\Omega\cap B_R(p_{0})}V_{k}e^{u_{k}}&=&-\int_{\Omega\cap B_R(p_{0})} \langle x-y_{k}, \frac{\nabla V_{k}}{V_{k}}\rangle V_{k}e^{u_{k}}
+\int_{\partial(\Omega\cap B_R(p_{0}))}V_{k} e^{u_k}\langle x-y_{k}, \nu\rangle\\
&&+\int_{\Omega\cap \partial B_R(p_{0})}\bigg[|\nabla s_{k}|^{N-2}\langle x-y_{k},\nabla s_{k}\rangle \partial_\nu s_{k} -\frac{|\nabla s_{k}|^{N}}{N}\langle x-y_{k},\nu\rangle\bigg].\nonumber
\end{eqnarray}
Since $|x-y_k|\leq 3R$ and $|\frac{\nabla V_{k}}{V_{k}}|\leq C_0^2$ in $\Omega\cap B_R(p_{0})$ in view of \eqref{E15}, by letting $k\rightarrow+\infty$ in \eqref{E22} we get that
$$N \mu\left(\Omega\cap B_R(p_0) \right) \leq 3R C_0^2 \mu\left(\Omega\cap B_R(p_0) \right) +3 C_0 R e^M| \partial(\Omega\cap B_R(p_{0}))| 
+ 3 R (1+\frac{1}{N})\int_{ \Omega\cap \partial B_R(p_0)} |\nabla s|^{N}$$
in view of $s_{k}\to s$ in $C^{1}_{loc}(\overline{\Omega}\setminus S )$. Since $s \in C^{1}(\overline{\Omega})$, by letting $R \to 0$ we deduce that $\mu(\{p_0\})=0$, and then $p_0 \notin \Sigma=S$. Since this is true for all $p_0 \in \partial \Omega$, we have shown that $S\subset \Omega$, and the proof of Step 5 is complete.

\medskip \noindent The combination of the previous 5 Steps provides us with a complete proof of Theorem \ref{T1}.
\begin{flushright}
$\Box$
\end{flushright}
\noindent Once Theorem \ref{T1} has been established, we can derive the following:\\
\emph{Proof (of Corollary \ref{T11}).}\\
By contradiction, assume the existence of sequences $\lambda_k \in \Lambda$, $V_k$ satisfying \eqref{E15} and $u_k \in C^{1,\alpha}(\overline{\Omega})$, $\alpha \in (0,1)$, weak solutions to \eqref{E1} so that $\|u_k\|_{\infty} \to +\infty$ as $k \to +\infty$. First of all, we can assume $\lambda_k >0$ (otherwise $u_k=0$) and 
\begin{equation} \label{2201}
\max_\Omega V_k e^{u_k-\alpha_k} \to +\infty
\end{equation}
as $k \to +\infty$ in view of Corollary \ref{2002}, where $\alpha_{k}=\log (\frac{\int_{\Omega}V_{k}e^{u_{k}}}{\lambda_{k}})$. The function $\hat{u}_{k}=u_{k}-\alpha_{k}$ solves
$$\left\{ \begin{array}{ll}
          -\Delta_{N}\hat{u}_{k}=V_{k}e^{\hat{u}_{k}}& \mbox{in}\ \Omega,\\
          \hat{u}_{k}=-\alpha_{k}& \mbox{on}\ \partial\Omega.\
        \end{array} \right.$$
Since $\lambda_{k}\in \Lambda$ and $\Lambda$ is a compact set, we have that $\sup_k \int_{\Omega}V_{k}e^{\hat{u}_{k}}=\sup_k \lambda_{k}<+\infty$, and then $\sup_k \int_{\Omega}e^{\hat{u}_{k}}<+\infty$ in view of \eqref{E15}. Since $\mbox{osc}_{\partial\Omega}(\hat{u}_{k})=0$, we can apply Theorem \ref{T1} to $\hat{u}_{k}$. Since $\max_\Omega \hat{u}_k \to +\infty$ as $k \to +\infty$ in view of \eqref{E15} and \eqref{2201}, alternative (iii) in Theorem \ref{T1} occurs for $\hat{u}_{k}$. By \eqref{1919} we get that
$$\lambda_k=\int_{\Omega}V_{k}e^{\hat{u}_{k}} \to c_N m$$
as $k \to +\infty$, for some $m \in \mathbb{N}$. Hence, $c_N m\in \Lambda$, in contradiction with $\Lambda \subset [0,+\infty) \setminus c_N \mathbb{N}$. 
\begin{flushright}
$\Box$
\end{flushright}

\section{A general existence result}\label{Sec5}
\noindent The Moser-Trudinger inequality \cite{zz} states that, for some $C_\Omega>0$, there holds
\begin{equation}\label{E28}
\int_{\Omega}\exp(\alpha|u|^{\frac{N}{N-1}})dx\leq C_\Omega
\end{equation}
for all $u\in W_{0}^{1,N}(\Omega)$ with $\|u\|_{W^{1,N}_0(\Omega)}\leq1$ and all $\alpha\leq\alpha_{N}=(N^N \omega_{N})^{\frac{1}{N-1}}$, whereas \eqref{E28} is false when $\alpha>\alpha_{N}$. A simple consequence of \eqref{E28}, always referred to as the Moser-Trudinger inequality, is the following:
\begin{equation}\label{E29}
\log\bigg(\int_{\Omega}e^{u}dx\bigg)\leq \frac{1}{Nc_{N}}\|u\|_{W_{0}^{1,N}(\Omega)}^{N}
+\log C_{\Omega}\end{equation}
for all $u\in W_{0}^{1,N}(\Omega)$, where $c_{N}$ is defined in Theorem \ref{T1}. Indeed, \eqref{E29} follows by \eqref{E28} by noticing
$$u \leq [(\frac{N\alpha_N}{N-1})^{-\frac{N-1}{N}} \|u\|_{W^{1,N}_0(\Omega)}]\times  [(\frac{N\alpha_N}{N-1})^{\frac{N-1}{N}}\frac{|u|}{\|u\|_{W^{1,N}_0(\Omega)}}] \leq
\frac{1}{N c_N} \|u\|^N_{W^{1,N}_0(\Omega)} 
+\alpha_N |\frac{u}{\|u\|_{W^{1,N}_0(\Omega)}}|^{\frac{N}{N-1}}$$
in view of the Young's inequality. By \eqref{E29} it follows that:
 $$ J_{\lambda}(u)\geq\frac{1}{N}(1-\frac{\lambda}{c_{N}})\|u\|_{W_{0}^{1,N}(\Omega)}^{N}-  \lambda \log (C_0 C_{\Omega})
$$
for all $u\in W_{0}^{1,N}(\Omega)$ in view of \eqref{E15}, where $J_\lambda$ is given in \eqref{E27}. Hence, $J_{\lambda}$ is bounded from below for $\lambda\leq c_{N}$ and coercive for $\lambda<c_N$. Since  the map $ u \in W_{0}^{1,N}(\Omega)\to Ve^u \in L^{1}(\Omega)$ is compact in view of \eqref{E29} and the embedding $W_{0}^{1,N}(\Omega)\hookrightarrow L^{2}(\Omega)$ is compact, for $\lambda<c_{N}$ we have that $J_{\lambda}$ attains the global minimum in $W_{0}^{1,N}(\Omega)$, and then \eqref{E1} is solvable. In Theorem \ref{thm0954} we just consider the difficult case $\lambda>c_N$. Notice that a solution $u \in W^{1,N}_0(\Omega)$ of \eqref{E1} belongs to $C^{1,\alpha}(\overline{\Omega})$ for some $\alpha \in (0,1)$, in view of \eqref{E29} and Theorems \ref{T5}, \ref{T3}.

\medskip \noindent The constant $\frac{1}{Nc_{N}}$ in \eqref{E29} is optimal as it follows by evaluating the inequality along 
$$U(\frac{x-p}{\epsilon})-\frac{N^2}{N-1}\log \epsilon\,,\quad p \in \Omega,$$
as $\epsilon \to 0$, up to make a cut-off away from $p$ so to have a function in $W^{1,N}_0(\Omega)$. The function $U$ is given in \eqref{E16} and, as already mentioned in the Introduction,  satisfies
$$\int_{\mathbb{R}^N} e^U=c_N.$$
Indeed, the equation $-\Delta_N U=e^U$ does hold pointwise in $\mathbb{R}^N \setminus \{0\}$, and then can be integrated in $B_R(0)\setminus B_\epsilon(0)$, $0<\epsilon<R$, to get
$$\int_{B_R(0)\setminus B_\epsilon(0) } e^U=-\int_{\partial B_R(0)} |\nabla U|^{N-2} \langle \nabla U, \nu \rangle+\int_{\partial B_\epsilon(0)} |\nabla U|^{N-2} \langle \nabla U, \nu \rangle,$$
where $\nu(x)=\frac{x}{|x|}$. Letting $\epsilon \to 0$ and $R \to +\infty$, we get that
$$\int_{\mathbb{R}^N} e^U=N(\frac{N^2}{N-1})^{N-1} \omega_N=c_N$$
in view of
$$\nabla U=-\frac{N^{2}}{N-1}\frac{|x|^{\frac{N}{N-1}-2}x}{1+|x|^{\frac{N}{N-1}}}.$$
Since $\frac{1}{Nc_{N}}$ in \eqref{E29} is optimal, the functional $J_{\lambda}$ is unbounded from below for $\lambda>c_{N}$, and our goal is to develop a global variational strategy to find a critical point of saddle type. The classical Morse theory states that a sublevel is a deformation retract of an higher sublevel unless there are critical points in between, and the crucial assumption on the functional is the validity of the so-called Palais-Smale condition.
Unfortunately, in our context such assumption fails since $J_\lambda$ admits unbounded Palais-Smale sequences for $\lambda\geq c_{N}$, see \cite{aha1, aha}. This technical difficulty can be overcome by using a method introduced by Struwe that exploits the monotonicity of the functional $\frac{J_{\lambda}}{\lambda}$ in $\lambda$. An alternative approach has been found in \cite{aha}, which provides a deformation between two sublevels unless $J_{\lambda_k}$ has critical points in the energy strip for some sequence $\lambda_k \to \lambda$.
Thanks to the compactness result in Corollary \ref{T11} and the a-priori estimates in Theorem \ref{T3}, we have at hands the following crucial tool:
\begin{lm}\label{T15}
Let $\lambda \in (c_N,+\infty) \setminus c_N \mathbb{N}$. If $J_{\lambda}$ has no critical levels $u$ with $a \leq J_\lambda(u) \leq b$, then $J_{\lambda}^{a}$ is a deformation retract of $J_{\lambda}^{b}$, where
$$J_\lambda^t=\{u \in W^{1,N}_0(\Omega):\, J_\lambda(u)\leq t \}.$$
\end{lm}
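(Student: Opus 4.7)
The plan is to apply the abstract deformation-lemma principle from \cite{aha}, which circumvents the failure of the global Palais--Smale condition for $J_\lambda$ (when $\lambda\geq c_N$) by testing along nearby parameters. That principle asserts the following dichotomy: either $J_\lambda^a$ is a deformation retract of $J_\lambda^b$, or there exist sequences $\lambda_k \to \lambda$ in $\mathbb{R}$ and $u_k\in W^{1,N}_0(\Omega)$ with $u_k$ a critical point of $J_{\lambda_k}$ satisfying $a\leq J_{\lambda_k}(u_k)\leq b$.

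Assuming by contradiction that the second alternative occurs, each $u_k$ is a weak solution of \eqref{E1} at parameter $\lambda_k$ with fixed weight $V$. Since $\lambda\in(c_N,+\infty)\setminus c_N\mathbb{N}$ and $c_N\mathbb{N}$ is closed and discrete, fix a compact neighborhood $\Lambda\subset(c_N,+\infty)\setminus c_N\mathbb{N}$ of $\lambda$. For $k$ sufficiently large $\lambda_k\in\Lambda$, so Corollary \ref{T11} yields a uniform bound $\|u_k\|_\infty\leq C$. Standard quasilinear regularity (Theorem \ref{T3}) then upgrades this to a uniform $C^{1,\alpha}(\overline{\Omega})$ bound for some $\alpha\in(0,1)$. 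By Ascoli--Arzel\`a, up to a subsequence $u_k\to u_\infty$ in $C^1(\overline{\Omega})$, and passing to the limit in the weak formulation of \eqref{E1} shows that $u_\infty$ is a critical point of $J_\lambda$. By joint continuity of the functional, $J_{\lambda_k}(u_k)\to J_\lambda(u_\infty)$, so $a\leq J_\lambda(u_\infty)\leq b$, contradicting the hypothesis. Hence the first alternative must hold.

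The main obstacle is precisely the failure of Palais--Smale for $J_\lambda$ at $\lambda\geq c_N$, and the whole point of the non-resonance assumption $\lambda\notin c_N\mathbb{N}$ is to keep the critical set compact in every bounded energy strip via the quantization property of Theorem \ref{T1}, which is captured by Corollary \ref{T11}. Without the non-resonance condition, critical points of $J_{\lambda_k}$ might concentrate as $\lambda_k\to\lambda\in c_N\mathbb{N}$ and the limiting procedure would fail.
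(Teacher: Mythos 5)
Your proposal is correct and follows essentially the same route the paper intends: invoke the deformation alternative of \cite{aha} (deformation exists unless $J_{\lambda_k}$ has critical points in the energy strip for some $\lambda_k\to\lambda$), then rule out the second alternative via the uniform bounds of Corollary \ref{T11} together with the regularity estimates of Theorem \ref{T3} and a limiting argument. The paper does not spell out more than this, so there is nothing further to compare.
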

\noindent To attack existence issues for \eqref{E1} when $\lambda \in (c_N,+\infty) \setminus c_N \mathbb{N}$, it is enough to find any two sublevels $J_\lambda^a$ and $J_\lambda^b$ which are not homotopically equivalent.

\medskip \noindent Hereafter, the parameter $\lambda$ is fixed in $(c_N,+\infty) \setminus c_N \mathbb{N}$. By Corollary \ref{T11} and Theorem \ref{T3} we have that $J_\lambda$ does not have critical points with large energy. Exactly as in \cite{013},  Lemma \ref{T15} can be used to construct a deformation retract of $W_{0}^{1,N}(\Omega)$ onto very high sublevels of $J_{\lambda}$. More precisely, we have the following
\begin{lm}\label{T17}
There exists $L>0$ large so that $J_{\lambda}^{L}$ is a deformation retract of $W_{0}^{1,N}(\Omega)$. In particular, $J_{\lambda}^{L}$ is contractible.\end{lm}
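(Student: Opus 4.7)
The plan is to combine the compactness result of Corollary \ref{T11} with the deformation Lemma \ref{T15} to construct, via a pseudo-gradient flow, a retraction of the whole space onto a sufficiently high sublevel of $J_\lambda$.

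\textbf{Step 1 (a priori energy bound on critical points).} I would first apply Corollary \ref{T11} to the compact set $\Lambda=\{\lambda\} \subset [0,+\infty)\setminus c_N\mathbb{N}$: every critical point $u \in W^{1,N}_0(\Omega)$ of $J_\lambda$ solves \eqref{E1}, hence $u \in C^{1,\alpha}(\overline{\Omega})$ by Theorem \ref{T3} together with \eqref{E29}, and satisfies $\|u\|_{\infty}\leq C$ for a constant $C$ depending only on $\lambda$ and $C_0$. From this a uniform bound $|J_\lambda(u)|\leq \Lambda_0$ on the critical levels follows directly from the expression \eqref{E27} and \eqref{E15}. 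I then fix $L>\Lambda_0$, so that $J_\lambda$ has no critical points at levels above $L$.

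\textbf{Step 2 (deformation of high sublevels onto $J_\lambda^L$).} For every $b>L$, Lemma \ref{T15} gives a strong deformation retract $\eta_b:[0,1]\times J_\lambda^b \to J_\lambda^b$ with $\eta_b(1,\cdot)(J_\lambda^b)\subset J_\lambda^L$ and $\eta_b(t,\cdot)|_{J_\lambda^L}=\mathrm{id}$. These retractions are produced by integrating a locally Lipschitz pseudo-gradient vector field $X$ for $J_\lambda$, defined on $\{J_\lambda>L\}$, which decreases $J_\lambda$ along its flow lines. Since the vector field is intrinsic to $J_\lambda$ (not to the sublevel), I would take the \emph{same} $X$ for all $b$; the flow $\Phi_t$ associated to $-X$ is then well-defined on $W^{1,N}_0(\Omega)\setminus\{J_\lambda=-\infty\}=W^{1,N}_0(\Omega)$ as long as trajectories do not meet critical points, which they cannot above level $L$.

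\textbf{Step 3 (globalizing the retraction).} The key point is to show that, starting from any $u\in W^{1,N}_0(\Omega)$, the flow $\Phi_t(u)$ enters $J_\lambda^L$ in finite time $T(u)$, with $T$ depending continuously on $u$. Since $\lambda>c_N$ the functional $J_\lambda$ is unbounded below along suitable rays, and along $\Phi_t$ the energy $J_\lambda(\Phi_t(u))$ is monotonically decreasing; if it stayed $\geq L$ for all $t$, the flow line would have to converge (or accumulate at) a critical level $\geq L$, contradicting Step 1 via a standard Palais-Smale-type argument applied on the bounded sublevel strip $\{L\leq J_\lambda\leq J_\lambda(u)\}$ (Palais--Smale on bounded sequences suffices here). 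Setting
\[
\eta(s,u)=\begin{cases}\Phi_{sT(u)}(u) & \text{if } J_\lambda(u)>L,\\ u & \text{if } J_\lambda(u)\leq L,\end{cases}
\]
one obtains a strong deformation retract of $W^{1,N}_0(\Omega)$ onto $J_\lambda^L$. The contractibility of $J_\lambda^L$ is then immediate, since $W^{1,N}_0(\Omega)$ is contractible (being a Banach space).

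\textbf{Main obstacle.} The delicate point is exactly Step 3: justifying that $T(u)$ is finite and continuous despite the failure of the Palais-Smale condition globally. This is where Corollary \ref{T11} is crucial, because it confines all possible obstructions to the flow (namely Palais-Smale limit points) to the sublevel $J_\lambda^{\Lambda_0}\subset J_\lambda^L$; any bounded trajectory trapped above $L$ would force the existence of a critical point above $L$, which is excluded. The only subtlety left is continuity of $T(u)$ at points where $J_\lambda(u)=L$, which is handled by the standard time-reparametrization trick (insert a cutoff of $J_\lambda-L$ into the speed of the flow), as in \cite{013}.
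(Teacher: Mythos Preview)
Your overall strategy --- use Corollary \ref{T11} to rule out critical points above some level $L$, then deform down --- matches the paper's. However, Steps 2--3 contain a genuine gap. You assert that the retractions provided by Lemma \ref{T15} arise from integrating a single pseudo-gradient vector field $X$, and then argue that the associated flow reaches $J_\lambda^L$ in finite time. But Lemma \ref{T15} is \emph{not} obtained via a standard pseudo-gradient flow: as explained just before its statement, it rests on Lucia's deformation lemma \cite{aha}, precisely because the Palais--Smale condition fails for $J_\lambda$ when $\lambda>c_N$. Your argument for $T(u)<\infty$ says that a trajectory trapped in the energy strip $\{L\le J_\lambda\le J_\lambda(u)\}$ must accumulate at a critical point, invoking ``Palais--Smale on bounded sequences''; yet boundedness of the \emph{energy} along the trajectory does not imply boundedness of the $W^{1,N}_0$-\emph{norm}, since $J_\lambda$ is not coercive for $\lambda>c_N$. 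The hypothesis needed to apply bounded-PS is therefore never verified, and nothing prevents the trajectory from escaping to infinity in norm while its energy stays above $L$. The cutoff reparametrization you mention at the end addresses continuity of $T$ \emph{once} finiteness is known, but does not supply finiteness itself.

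The paper sidesteps this issue entirely: it applies Lemma \ref{T15} only on unit strips, obtaining for each integer $n\ge L$ a retraction $\eta_n$ of $J_\lambda^{n+1}$ onto $J_\lambda^n$, and then \emph{concatenates} these maps. For $u$ with $n<J_\lambda(u)\le n+1$ one composes $\eta_n,\eta_{n-1},\dots,\eta_L$ in succession; the first time $s_u$ at which the concatenated deformation reaches level $L$ is automatically finite (at most $n-L+1$ unit steps) and continuous in $u$, and $\eta(t,u)=\hat\eta(ts_u,u)$ gives the global retraction. No global flow, no PS argument, and no independent finite-time estimate are needed beyond what Lemma \ref{T15} already delivers on each strip.
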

\noindent For the sake of completeness, we give some details of the proof.
\begin{proof}
Take $L \in\mathbb{N}$ large so that $J_{\lambda}$ has no critical points $u$ with $J_{\lambda}(u)\geq L$. By Lemma \ref{T15} $J_{\lambda}^{n}$ is a deformation retract of $J_{\lambda}^{n+1}$ for all $n\geq L$,  and $\eta_{n}$ will denote the corresponding retraction map. Given $u \in W_{0}^{1,N}(\Omega)$ with $J_{\lambda}(u)>L$, by setting recursively 
$$
\left\{ \begin{array}{l}
         \eta^{1,n}(s,u)=\eta_{n}(s,u)\\
         \eta^{2,n}(s,u)=\eta_{n-1}(s-1,\eta_{n}(1,u))\\
         \vdots\\
         \eta^{k+1,n}=\eta_{n-k}(s-k,\eta^{(k)}(k,u)),\
        \end{array} \right.
\vspace{0,2cm}$$
for $s\geq 0$ we consider the following map
$$
\hat{\eta}(s,u)=\left\{ \begin{array}{ll}
         \eta^{k+1,n}(s,u)& \mbox{if}\ n< J_{\lambda}(u)\leq n+1 \hbox{ for }n\geq L, s\in[k,k+1]\\
         u& \mbox{if}\ J_{\lambda}(u)\leq L.\
        \end{array} \right.
\vspace{0,2cm}$$
Next, define $s_u$ as the first $s>0$ such that $J_{\lambda}(\hat{\eta}(s,u))=L$ if $J_{\lambda}(u)>L$ and as $0$ if $J_{\lambda}(u)\leq L.$ The map $\eta(t,u)=\hat{\eta}(t s_u,u):[0,1] \times W^{1,N}_0(\Omega) \to W^{1,N}_0(\Omega)$ satisfies
$\eta(1,u) \in   J_{\lambda}^L$ for $u \in W^{1,N}_0(\Omega)$ and $\eta(t,u)=u$ for $(t,u)\in [0,1] \times J_{\lambda}^L$. Since $s_u$ depends continuously in $u$, the map $\eta$ is continuous in both variables, providing us with the required deformation retract.
\end{proof}
\medskip \noindent Thanks to Lemmas \ref{T15} and \ref{T17}, we are led to study the topology of sublevels for $J_\lambda$ with very low energy. The real core of such a global variational approach is an improved form \cite{0100} of the Moser-Trudinger inequality for functions $u \in W^{1,N}_0(\Omega)$ with a measure $\frac{Ve^u}{\int_\Omega V e^u}$ concentrated on several subomains in $\Omega$. As a consequence, when $\lambda\in(c_{N}m, c_{N}(m+1))$ and $J_{\lambda}(u)$ is very negative, the measure $\frac{Ve^{u}}{\int_{\Omega}Ve^{u}}$ can be concentrated near at most $m$ points of $\overline{\Omega}$, and can be naturally associated to an element $\sigma \in \mathcal{B}_{m}(\overline{\Omega})$, where
$$\mathfrak{B}_{m}(\overline{\Omega}):=\{ \sum_{i=1}^{m} t_{i}\delta_{p_i}:\ t_{i}\geq0,\ \sum_{i=1}^{m} t_{i}=1, \ p_i \in\overline{\Omega}\}$$
has been first introduced by Bahri and Coron in \cite{501,502} and is known in literature as the \emph{space of formal barycenters} of $\overline{\Omega}$ with order $m$. The topological structure of $J_{\lambda}^{-L}$, $L>0$ large, is completely characterized in terms of $\mathcal{B}_{m}(\overline{\Omega})$. The non-contractibility of $\mathcal{B}_{m}(\overline{\Omega})$ let us see a change in topology between $J_{\lambda}^L$ and $J_\lambda^{-L}$ for $L>0$ large, and by Lemma \ref{T15} we obtain the existence result claimed in Theorem \ref{thm0954}. Notice that our approach is simpler than the one in \cite{DJLW1,012,0120} (see also \cite{2000}), by using \cite{aha} instead of the Struwe's monotonicity trick to bypass the general failure of PS-condition for $J_{\lambda}$.

\medskip \noindent As already explained, the key point is the following improvement of the Moser-Trudinger inequality:
\begin{lm}\label{T18}
Let $\Omega_{i}$, $i=1,\dots,l+1$, be subsets of $\overline{\Omega}$ so that $\mbox{dist}(\Omega_{i},\Omega_{j})\geq\delta_{0}>0$, for $i\neq j$, and $\gamma_{0}\in(0,\frac{1}{l+1})$. Then, for any $\epsilon>0$ there exists a constant $C=C(\epsilon, \delta_{0}, \gamma_{0})$ such that there holds
$$\log (\int_{\Omega}V e^{u}dx)\leq \frac{1}{Nc_{N}(l+1-\epsilon)}\|u\|_{W_{0}^{1,N}(\Omega)}^{N}+C$$
for all $u\in W_{0}^{1,N}(\Omega)$ with
\begin{equation}\label{E31}
\frac{\int_{\Omega_{i}} V e^{u}}{\int_{\Omega}V e^{u}}\geq\gamma_{0}\quad i=1,\dots,l+1.
\end{equation}
\end{lm}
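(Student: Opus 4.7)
\emph{Proof plan.} The plan is to follow the classical Chen--Li approach to improved Moser--Trudinger inequalities \cite{0100}, adapted to the $N$-Laplacian setting. The heart of the argument is to distribute the mass of $V e^{u}$ across the $l+1$ disjoint regions $\Omega_{i}$ and to apply a local form of \eqref{E29} on each.

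First I fix $\hat{\Omega}_{i}\subset\mathbb{R}^{N}$ as the open $\delta_{0}/3$-neighborhood of $\Omega_{i}$; the assumption $\mbox{dist}(\Omega_{i},\Omega_{j})\geq\delta_{0}$ makes the $\hat{\Omega}_{i}$'s pairwise disjoint. Extending $u\in W_{0}^{1,N}(\Omega)$ by zero outside $\Omega$ gives $u\in W^{1,N}(\hat{\Omega}_{i})$, and I set $\alpha_{i}:=|\hat{\Omega}_{i}|^{-1}\int_{\hat{\Omega}_{i}}u$. From \eqref{E31}, $\int_{\Omega} V e^{u}\leq\gamma_{0}^{-1}\int_{\Omega_{i}}V e^{u}$ for each $i=1,\dots,l+1$; averaging the $l+1$ logarithmic versions produces
$$\log\int_{\Omega}V e^{u}\;\leq\;\log(1/\gamma_{0})\;+\;\frac{1}{l+1}\sum_{i=1}^{l+1}\log\int_{\Omega_{i}}V e^{u}.$$

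Next I apply the Moser--Trudinger inequality to the mean-zero function $u-\alpha_{i}$ on the Lipschitz domain $\hat{\Omega}_{i}$ --- which retains the sharp exponent $\alpha_{N}$ from \eqref{E28} by a classical rearrangement argument combined with Poincar\'e--Wirtinger --- and use $V\leq C_{0}$ together with $\Omega_{i}\subset\hat{\Omega}_{i}$ to deduce
$$\log\int_{\Omega_{i}}V e^{u}\;\leq\;\alpha_{i}\;+\;\frac{1}{Nc_{N}}\int_{\hat{\Omega}_{i}}|\nabla u|^{N}\;+\;C(\delta_{0},\Omega).$$
By disjointness, $\sum_{i}\int_{\hat{\Omega}_{i}}|\nabla u|^{N}\leq \|u\|_{W_{0}^{1,N}(\Omega)}^{N}$, so inserting the latter into the averaged bound yields
$$\log\int_{\Omega}V e^{u}\;\leq\;\frac{1}{(l+1)Nc_{N}}\|u\|_{W_{0}^{1,N}(\Omega)}^{N}\;+\;\frac{1}{l+1}\sum_{i=1}^{l+1}\alpha_{i}\;+\;C.$$
The averages satisfy $|\alpha_{i}|\leq C(\delta_{0},\Omega)\|u\|_{W_{0}^{1,N}(\Omega)}$ by Poincar\'e, hence Young's inequality gives $\alpha_{i}\leq\eta\|u\|_{W_{0}^{1,N}(\Omega)}^{N}+C_{\eta}$ for every $\eta>0$; tuning $\eta=\epsilon/[(l+1)(l+1-\epsilon)Nc_{N}]$ absorbs the $\alpha_{i}$'s into the leading term and turns its coefficient into $1/[(l+1-\epsilon)Nc_{N}]$, proving the claim.

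The main obstacle is the local Moser--Trudinger step on $\hat{\Omega}_{i}$ with the \emph{sharp} exponent: one needs the mean-zero variant of \eqref{E29} on bounded Lipschitz domains, which is classical but not explicitly stated in the excerpt. A self-contained alternative (closer in spirit to \cite{0100}) is to apply \eqref{E29} directly to the cutoff $v_{i}=g_{i}(u-\alpha_{i})\in W_{0}^{1,N}(\Omega)$, with $g_{i}\equiv 1$ on $\Omega_{i}$ and $\mbox{supp}(g_{i})\subset\hat{\Omega}_{i}$; the delicate point then is to bound the boundary-layer perturbation $\int|\nabla g_{i}|^{N}|u-\alpha_{i}|^{N}$ by $\eta\|u\|_{W_{0}^{1,N}(\Omega)}^{N}+C_{\eta}$ via Poincar\'e--Wirtinger on the transition annulus $\{0<g_{i}<1\}$, which forces a careful choice of $g_{i}$ to balance the gradient bound against the Poincar\'e constant on a thin shell.
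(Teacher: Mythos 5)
Your overall architecture --- cutoffs supported near the $\Omega_i$, disjointness of their supports to gain the factor $\frac{1}{l+1}$ on the gradient term, Young's inequality to absorb lower-order contributions --- is the same as the paper's, and averaging the logarithms over all $l+1$ regions instead of the paper's pigeonhole selection of the single index $i$ with $\int(g_i|\nabla u|)^N\leq\frac{1}{l+1}\|u\|^N_{W^{1,N}_0(\Omega)}$ is an immaterial variation. The gap is in how you dispose of the error term created by the localization, and it affects both of your routes. In the cutoff route, the boundary-layer term $\int|\nabla g_i|^N|u-\alpha_i|^N$ is homogeneous of degree $N$ in $u$, exactly like the leading term $\|u\|^N_{W^{1,N}_0(\Omega)}$. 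It therefore \emph{cannot} be bounded by $\eta\|u\|^N_{W^{1,N}_0(\Omega)}+C_\eta$ for arbitrary $\eta>0$: replacing $u$ by $tu$ and letting $t\to+\infty$ shows this would force $\int|\nabla g_i|^N|u-\alpha_i|^N\leq\eta\|u\|^N_{W^{1,N}_0(\Omega)}$ for every $\eta$, i.e.\ the term would have to vanish identically. No ``careful choice of $g_i$'' rescues this: thinning the transition shell increases $|\nabla g_i|^N$ at the same rate that the one-dimensional Poincar\'e constant of the shell decreases, and in any case $\alpha_i$ is the mean over $\hat\Omega_i$, not over the shell.

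This is precisely why the paper does not apply \eqref{E29} to $g_i(u-\alpha_i)$ but to the truncation $g_i(|u|-2a_\eta)_+$, where $a_\eta$ is chosen so that $|\{|u|\geq a\}|<\eta$ for $a>a_\eta$. The truncated function is supported on a set of measure at most $\eta$, so H\"older together with the Sobolev embedding gives $\|(|u|-2a_\eta)_+\|^N_{L^N}\leq C\eta^{1/2}\|u\|^N_{W^{1,N}_0(\Omega)}$ --- a degree-$N$ bound, but with a coefficient that genuinely tends to $0$ as $\eta\to0$ --- while the price paid, the additive $2a_\eta\leq C\eta^{-1}\|u\|_{W^{1,N}_0(\Omega)}$ in the exponent, is only of degree one and is exactly what Young's inequality can absorb. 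This truncation step is the actual content of the lemma and is missing from your argument. Your first route has the further defect that the mean-zero Moser--Trudinger inequality on $\hat\Omega_i$ does \emph{not} retain the constant $\alpha_N$: boundary concentration degrades it (Cherrier-type phenomenon), and the variant in which $e^{u}$ is integrated only over the compactly contained $\Omega_i$ is not a classical off-the-shelf statement --- proving it would itself require the cutoff-plus-truncation argument you are trying to bypass.
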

\begin{proof} Let $g_1,\dots,g_{l+1} $ be cut-off functions so that $0\leq g_i \leq 1$, $g_{i}=1$ in $\Omega_{i}$, $g_{i}=0$ in $\{\hbox{dist}(x,\Omega_{i})\geq\frac{\delta_{0}}{4} \}$ and $\|g_{i}\|_{C^{2}(\overline{\Omega})}\leq C_{\delta_0}.$ Since $g_i$, $i=1,\dots,l$, have disjoint supports, for all $u\in W_{0}^{1,N}(\Omega)$ there exists $i=1,\dots,l+1$ such that
\begin{equation}\label{E32}
\int_{\Omega}(g_i |\nabla u|)^{N}\leq\frac{1}{l+1}\int_{\displaystyle \cup_{i=1}^{l+1}\mbox{supp} g_i}|\nabla u|^{N}\leq\frac{1}{l+1}\|u\|_{W_{0}^{1,N}(\Omega)}^{N}.
\end{equation}
Since by the Young's inequality
\begin{eqnarray*}
|\nabla(g_{i}u)|^{N} &\leq& (g_{i}|\nabla u|+|\nabla g_{i}||u|)^{N} \leq (g_{i}|\nabla u|)^{N}+C_1\big[(g_i|\nabla u|)^{N-1}|\nabla g_{i}||u|+(|\nabla g_{i}||u|)^{N}\big]\\
& \leq&  [1+ \frac{\epsilon}{(l+1)(3l+3-\epsilon)}](g_{i}|\nabla u|)^{N}+C_2(|\nabla g_{i}| |u|)^{N}
\end{eqnarray*}
for all $\epsilon>0$ and some $C_1>0$, $C_2=C_2(\epsilon)>0$, we have that
$$\|g_{i}u\|_{W_{0}^{1,N}(\Omega)}^{N} \leq \int_{\Omega}(g_{i}|\nabla u|)^{N}+\frac{\epsilon}{(l+1)(3l+3-\epsilon)}  \|u\|_{W_{0}^{1,N}(\Omega)}+N c_N C_3 \|u\|_{L^{N}(\Omega)}^{N},$$
where $C_3=\frac{ C_2 C_{\delta_{0}}^{N}}{N c_N}$. Since $g_i u \in W^{1,N}_0(\Omega)$, by \eqref{E29} and \eqref{E32} it follows that
\begin{equation}\label{E33}
\int_{\Omega}e^{g_{i}u}\leq C_{\Omega} \exp\bigg(\frac{3}{Nc_{N}(3l+3-\epsilon )}\|u\|_{W_{0}^{1,N}(\Omega)}^{N}+C_3 \|u\|_{L^{N}(\Omega)}^{N}\bigg)
\end{equation}
does hold for all $u \in W^{1,N}_0(\Omega)$ and some $i=1,\dots,l+1$. 

\medskip \noindent Let $\eta \in (0, |\Omega|)$ be given. Since $\{|u| \geq 0\}=\Omega$ and $\displaystyle \lim_{a\to +\infty} |\{| u| \geq a\}|=0$, the set
$$A_\eta=\{ a \geq 0:\ |\{ |u| \geq a\}|\geq \eta\}$$
is non-empty and bounded from above. Letting $a_\eta=\sup A_\eta$, we have that $a_\eta \geq 0$ is a finite number so that
\begin{equation} \label{1837}
\ |\{ |u| \geq a_\eta\}|\geq \eta,\quad \ |\{ |u| \geq a\}|< \eta\quad \forall \ a>a_\eta
\end{equation}
in view of the left-continuity of the map $a \to \ |\{ |u| \geq a\}|$. Given $\eta>0$ and $u \in W^{1,N}_0(\Omega)$ satisfying \eqref{E31}, we can fix $a=a_\eta$ and $i=1,\dots,l+1$
so that \eqref{E33} applies to $(|u|-2a)_{+}$ yielding to
\begin{eqnarray*}
\int_{\Omega} V e^{u} \leq \frac{1}{\gamma_{0}}\int_{\Omega_{i}} Ve^{|u|}\leq\frac{C_0 e^{2a}}{\gamma_{0}}\int_{\Omega}e^{g_{i}(|u|-2a)_{+}} \leq \frac{C_0 C_\Omega }{\gamma_{0}} \exp\bigg(\frac{3}{Nc_{N}(3l+3-\epsilon)}\|u\|_{W_{0}^{1,N}(\Omega)}^{N}+2a+C_3 \|(|u|-2a)_{+}\|_{L^{N}(\Omega)}^{N}\bigg)
\end{eqnarray*}
in view of \eqref{E15}. By the Poincar\'{e} and Young inequalities and the first property in \eqref{1837} it follows that
$$2a \leq \frac{2}{\eta} \int_{\{|u|\geq a\}}|u| \leq \frac{C_5}{\eta}\|u\|_{W_{0}^{1,N}(\Omega)} \leq \frac{3 \epsilon}{Nc_{N}(3l+3-\epsilon)(3l+3-2\epsilon)} \|u\|_{W_{0}^{1,N}(\Omega)}^{N}+C_6$$
for some $C_5>0$ and  $C_6=C_6(\epsilon,\eta)>0$, and there holds 
$$\|(|u|-2a)_{+}\|_{L^{N}(\Omega)}^{N}\leq \eta^{\frac{1}{2}}\|(|u|-2a)_{+}\|_{L^{2N}(\Omega)}^{N}\leq C_4 \eta^{\frac{1}{2}}\|u\|_{W_{0}^{1,N}(\Omega)}^{N}$$
for some $C_4>0$ in view of the H\"{o}lder and Sobolev inequalities and the second property in \eqref{1837}. Choosing $\eta$ small as
$$\eta = \left(\frac{\epsilon}{C_3 C_4  N c_N (3l+3-2\epsilon)(l+1-\epsilon)}\right)^2,$$
we finally get that
$$\int_{\Omega}V e^{u}\leq\frac{C_0 C_{\Omega}}{\gamma_{0}}\exp\bigg(\frac{1}{Nc_{N}(l+1-\epsilon)}\|u\|_{W_{0}^{1,N}(\Omega)}^{N}+C \bigg)$$
for some $C=C(\epsilon, \delta_{0}, \gamma_{0})$.
\end{proof}
\noindent A criterium for the occurrence of \eqref{E31} is the following:
\begin{lm}\label{T19}
Let $l\in\mathbb{N}$ and $0<\epsilon,r<1$. There exist $\bar{\epsilon}>0$ and $\bar{r}>0$ such that, for every $0\leq f\in L^{1}(\Omega)$ with 
\begin{equation} \label{1550}
\|f\|_{L^{1}(\Omega)}=1\ , \quad \int_{ \Omega \cap \bigcup_{i=1}^l B_r (p_i) } f <1-\epsilon\qquad \forall\: p_1,\dots,p_l \in \overline{\Omega},
\end{equation}
there exist $l+1$ points $\bar{p}_1,\dots,\bar{p}_{l+1} \in\overline{\Omega}$ so that
$$\int_{\Omega \cap B_{\bar{r}}(\bar{p}_i)}f \geq\bar{\epsilon}\  ,\qquad B_{2\bar{r}}(\bar{p}_i) \cap B_{2\bar{r}}(\bar{p}_j)=\emptyset\quad \forall \ i \neq j.$$
\end{lm}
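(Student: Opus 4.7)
The plan is to produce the $l+1$ points directly via a greedy selection that at each step picks the point where $f$ concentrates most, subject to being separated from the previously chosen ones. Set $\bar{r} = r/4$ and let $m: \overline{\Omega} \to [0,1]$ be the continuous function $m(p) = \int_{\Omega \cap B_{\bar{r}}(p)} f$ (continuity follows from dominated convergence, since $\partial B_{\bar{r}}(p)$ has zero Lebesgue measure). Let $N_0 = N_0(N, r, |\Omega|)$ be an upper bound on the number of balls of radius $\bar{r}/2$ needed to cover $\overline{\Omega}$, and fix $\bar{\epsilon} = \epsilon/(2 N_0)$.

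The iterative construction: pick $\bar{p}_1 \in \overline{\Omega}$ as a maximizer of $m$; having chosen $\bar{p}_1, \dots, \bar{p}_j$, set $E_j := \overline{\Omega} \setminus \bigcup_{i=1}^j B_{4\bar{r}}(\bar{p}_i)$ and, provided $E_j \neq \emptyset$, pick $\bar{p}_{j+1} \in E_j$ as a maximizer of $m|_{E_j}$. By construction $|\bar{p}_i - \bar{p}_j| \geq 4\bar{r}$ for $i \neq j$, so the balls $B_{2\bar{r}}(\bar{p}_i)$ are pairwise disjoint and $m(\bar{p}_1) \geq m(\bar{p}_2) \geq \cdots$. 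If the procedure terminates at some step $j \leq l$, then $\overline{\Omega} \subset \bigcup_{i=1}^j B_{4\bar{r}}(\bar{p}_i) = \bigcup_{i=1}^j B_r(\bar{p}_i)$; padding with any $l-j$ additional points of $\overline{\Omega}$ yields $\int_{\Omega \cap \bigcup_{i=1}^l B_r(\bar{p}_i)} f = 1$, contradicting \eqref{1550}. Hence the algorithm produces $l+1$ points.

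For the final case analysis, if $m(\bar{p}_{l+1}) \geq \bar{\epsilon}$ the conclusion is immediate. Otherwise $m(q) \leq m(\bar{p}_{l+1}) < \bar{\epsilon}$ for every $q \in E_l$; covering the compact set $E_l$ by at most $N_0$ balls $B_{\bar{r}/2}(q_k)$ with $q_k \in E_l$ yields
$$\int_{\Omega \cap E_l} f \leq \sum_k \int_{\Omega \cap B_{\bar{r}/2}(q_k)} f \leq \sum_k m(q_k) < N_0 \bar{\epsilon} = \frac{\epsilon}{2},$$
so that $\int_{\Omega \cap \bigcup_{i=1}^l B_r(\bar{p}_i)} f > 1 - \epsilon$, again contradicting \eqref{1550}. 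The only delicate point is bookkeeping: both $\bar{r}$ and $\bar{\epsilon}$ must be chosen \emph{a priori} depending only on $l, r, \epsilon, N, |\Omega|$ and not on the particular $f$. This is transparent from the construction, since $\bar{r} = r/4$ is fixed at the outset and $N_0$ is a purely metric covering number.
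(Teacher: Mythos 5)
Your proof is correct, and it reaches the same quantitative heart as the paper's argument --- the choice $\bar\epsilon=\epsilon/(2N_0)$ with $N_0$ a covering number, and the observation that failure to find $l+1$ separated ``heavy'' points forces $l$ balls of radius $r$ to capture mass $>1-\epsilon$ --- but it is organized differently. The paper argues by contradiction, discretizes $\overline{\Omega}$ by a finite $\bar r$-net with $\bar r=r/8$, isolates the net points carrying mass $\geq\bar\epsilon$, and then clusters them greedily into at most $l$ groups, each contained in a ball of radius $6\bar r<r$. You instead work directly on the continuum: you maximize the continuous concentration function $m(p)=\int_{\Omega\cap B_{\bar r}(p)}f$ greedily subject to a $4\bar r$-separation constraint, which makes the two required properties (mass lower bound and disjointness of the doubled balls) immediate from the construction and replaces the clustering bookkeeping with a clean dichotomy on $m(\bar p_{l+1})$. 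The one imprecise point is the claim that $E_l$ can be covered by at most $N_0$ balls of radius $\bar r/2$ \emph{centered in} $E_l$: recentering a ball of a cover of $\overline{\Omega}$ at a point of $E_l$ it meets requires doubling the radius. This is harmless --- cover $E_l$ by at most $N_0$ balls $B_{\bar r}(q_k)$ with $q_k\in E_l$, and then $\int_{\Omega\cap B_{\bar r}(q_k)}f=m(q_k)<\bar\epsilon$ is exactly the quantity you control --- but as written the covering step does not quite parse. With that one-line fix the proof is complete, and the constants $\bar r=r/4$, $\bar\epsilon=\epsilon/(2N_0)$ indeed depend only on $l$, $r$, $\epsilon$, $N$ and $\Omega$, as required.
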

\begin{proof} By contradiction, for all $\bar{\epsilon},\bar{r}>0$ we can find $0\leq f\in L^{1}(\Omega)$ satisfying \eqref{1550} such that, for every $(l+1)$-tuple of points $p_{1},...,p_{l+1}\in\overline{\Omega}$ the statement
\begin{equation}\label{E34}
\int_{\Omega \cap B_{\bar{r}}(p_{i})}f\geq\bar{\epsilon}\ , \qquad B_{2\bar{r}}(p_{i})\cap B_{2\bar{r}}(p_{j})=\emptyset\quad \forall \ i\neq j
\end{equation}
is false. Setting $\bar{r}=\frac{r}{8}$, by compactness we can find $h$ points $x_{i}\in\overline{\Omega}$, $i=1,\dots,h$, such that $\overline{\Omega} \subset \bigcup_{i=1}^{h} B_{\bar{r}}(x_{i})$. Setting $\bar{\epsilon}=\frac{\epsilon}{2h}$, there exists $i=1,...,h$ such that $\int_{\Omega \cap B_{\bar{r}}(x_{i})}f\geq\bar{\epsilon}$. Let $\{\tilde{x}_{1},...,\tilde{x}_{j}\}\subseteq\{x_{1},...,x_{h}\}$ be the maximal set with respect to the property $\int_{\Omega \cap B_{\bar{r}}(\tilde{x}_{i})}f\geq\bar{\epsilon}.$ Set $j_{1}=1$ and let $X_{1}$ denote the set
 $$X_{1}= \Omega \cap \bigcup_{i\in\Lambda_{1}}B_{\bar{r}}(\tilde{x}_{i}) \subseteq \Omega \cap B_{6\bar{r}}(\tilde{x}_{j_{1}}),\quad \Lambda_{1}=\{i=1,...,j:\  B_{2\bar{r}}(\tilde{x}_{i})\cap B_{2\bar{r}}(\tilde{x}_{j_{1}})\neq\emptyset\}.$$
If non empty, choose $j_{2}\in\{1,...,j\}\setminus\Lambda_{1}$, i.e. $B_{2\bar{r}}(\tilde{x}_{j_{2}})\cap B_{2\bar{r}}(\tilde{x}_{j_{1}})=\emptyset.$ Let $X_{2}$ denote the set
 $$X_{2}=\Omega \cap \bigcup_{i\in\Lambda_{2}}B_{\bar{r}}(\tilde{x}_{i})\subseteq\Omega \cap B_{6\bar{r}}(\tilde{x}_{j_{2}}), \quad \Lambda_{2}=\{i=1,...,j:\ B_{2\bar{r}}(\tilde{x}_{i})\cap B_{2\bar{r}}(\tilde{x}_{j_{2}})\neq\emptyset\}.$$
Iterating this process, if non empty, at the $l-$th step we choose $j_l\in\{1,...,j\}\setminus\bigcup_{j=1}^{l-1}\Lambda_{j}$, i.e. $B_{2\bar{r}}(\tilde{x}_{j_l}) \cap B_{2\bar{r}}(\tilde{x}_{j_i})=\emptyset$ for all $i=1,\dots,l-1$, and we define 
$$X_l=\Omega \cap \bigcup_{i\in\Lambda_l}B_{\bar{r}}(\tilde{x}_{i})\subseteq \Omega \cap B_{6\bar{r}}(\tilde{x}_{j_l}), \quad \Lambda_l=\{i=1,...,j:\ B_{2\bar{r}}(\tilde{x}_{i})\cap B_{2\bar{r}}(\tilde{x}_{j_l})\neq\emptyset\}.$$
By \eqref{E34} the process has to stop at the $s-$th step with $s\leq l$. By the definition of $\bar{r}$ we obtain
 $$\Omega \cap \bigcup_{i=1}^{j} B_{\bar{r}}(\tilde{x}_{i}) \subset\bigcup_{i=1}^{s} X_{i}\subset \Omega \cap \bigcup_{i=1}^{s}B_{6\bar{r}}(\tilde{x}_{j_{i}})\subset \Omega \cap \bigcup_{i=1}^{s} B_{r}(\tilde{x}_{j_{i}})$$
in view of $\{1,...,j\}=\bigcup_{i=1}^{s}\Lambda_{i}$.
Therefore, we have that
\begin{eqnarray*}
\int_{ \Omega \setminus\bigcup_{i=1}^{s} B_{r}(\tilde{x}_{j_{i}})}f  \leq \int_{\Omega \setminus\bigcup_{i=1}^{j} B_{\bar{r}}(\tilde{x}_{i})}f=\int_{( \Omega \cap \bigcup_{i=1}^{h} B_{\bar{r}}(x_{i}))\setminus (\bigcup_{i=1}^{j} B_{\bar{r}}(\tilde{x}_{i}))}f<(h-j)\bar{\epsilon}<\frac{\epsilon}{2}
\end{eqnarray*}
in view of the definition of $\tilde{x}_{1}, \dots ,\tilde{x}_{j}$. Define $p_i$ as $\tilde{x}_{j_{i}}$ for $i=1,\dots,s$ and as $\tilde{x}_{j_{s}}$ for $i=s+1,\dots,l$. Since $\int_{\Omega \setminus\bigcup_{i=1}^{l}B_{r}(p_{i})}f<\frac{\epsilon}{2}$, we deduce that
$$\int_{\Omega \cap \bigcup_{i=1}^{l} B_{r}(p_{i})}f=\int_{\Omega}f-\int_{\Omega \setminus \bigcup_{i=1}^{l} B_{r}(p_{i})}f>1-\frac{\epsilon}{2}>1-\epsilon,$$
contradicting the second property in \eqref{1550}. The proof is complete.
\end{proof} 
\noindent As a consequence, we get that
\begin{lm}\label{T20}
Let $\lambda\in\big(c_{N}m, c_{N}(m+1)\big)$, $m\in\mathbb{N}$. For any $0<\epsilon,r <1$ there exists a large $L=L(\epsilon,r)>0$ such that, for every $u\in W_{0}^{1,N}(\Omega)$ with $J_{\lambda}(u)\leq-L$, we can find $m$ points $p_{i,u}\in\overline{\Omega}$, $ i=1,\dots,m$, satisfying
$$\int_{\Omega\setminus \cup_{i=1}^{m} B_{r}(p_{i,u})}Ve^{u} \leq \epsilon\int_{\Omega}Ve^{u}.$$
\end{lm}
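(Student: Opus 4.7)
\emph{Proof proposal.} The plan is to argue by contradiction, using the improved Moser--Trudinger inequality (Lemma \ref{T18}) together with the covering Lemma \ref{T19}. Suppose the conclusion fails for some $\epsilon,r \in (0,1)$. Then there exists a sequence $u_n \in W^{1,N}_0(\Omega)$ with $J_\lambda(u_n) \to -\infty$ such that, for every choice of $m$ points $p_1,\dots,p_m \in \overline{\Omega}$,
\[ \int_{\Omega \cap \bigcup_{i=1}^m B_r(p_i)} V e^{u_n} < (1-\epsilon) \int_\Omega V e^{u_n}. \]
Setting $f_n = V e^{u_n}/\int_\Omega V e^{u_n}$, the measures $f_n\,dx$ are probability measures satisfying the hypothesis \eqref{1550} of Lemma \ref{T19} with $l=m$.

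Lemma \ref{T19} then produces $\bar\epsilon, \bar r>0$ (depending only on $\epsilon, r, m$) and, for each $n$, points $\bar p_1^n,\dots,\bar p_{m+1}^n \in \overline{\Omega}$ such that $\int_{\Omega \cap B_{\bar r}(\bar p_i^n)} f_n \geq \bar\epsilon$ and the balls $B_{2\bar r}(\bar p_i^n)$ are pairwise disjoint. Setting $\Omega_i^n = \Omega \cap B_{\bar r}(\bar p_i^n)$, we have $\mathrm{dist}(\Omega_i^n, \Omega_j^n) \geq 2\bar r$ for $i \neq j$. Fix $\gamma_0 = \min\{\bar\epsilon, \tfrac{1}{2(m+2)}\} \in (0,\tfrac{1}{m+1})$, so that \eqref{E31} is satisfied for $u_n$ and the sets $\Omega_1^n,\dots,\Omega_{m+1}^n$. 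Applying Lemma \ref{T18} with $l+1 = m+1$, $\delta_0 = 2\bar r$ and this $\gamma_0$, for any $\epsilon'>0$ we obtain a constant $C=C(\epsilon',\bar r, \gamma_0)$, independent of $n$ (the constant depends only on $\delta_0$ and $\gamma_0$, not on the location of the $\bar p_i^n$), such that
\[ \log \int_\Omega V e^{u_n} \leq \frac{1}{N c_N (m+1-\epsilon')} \|u_n\|_{W_0^{1,N}(\Omega)}^N + C. \]

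Since $\lambda < c_N(m+1)$, we may choose $\epsilon' > 0$ small enough that $\tfrac{\lambda}{c_N(m+1-\epsilon')} \leq 1-\delta$ for some $\delta>0$. Inserting the estimate into the definition \eqref{E27} of $J_\lambda$ yields
\[ J_\lambda(u_n) \geq \frac{1}{N}\Big(1-\frac{\lambda}{c_N(m+1-\epsilon')}\Big) \|u_n\|_{W_0^{1,N}(\Omega)}^N - \lambda C \geq - \lambda C, \]
contradicting $J_\lambda(u_n) \to -\infty$. The main technical point to handle carefully is the choice of $\gamma_0$ strictly below $\tfrac{1}{m+1}$, which is required by Lemma \ref{T18} but is a free reduction of the bound $\bar\epsilon$ provided by Lemma \ref{T19}; the rest is a direct combination of the two improved inequalities with the gap $\lambda < c_N(m+1)$.
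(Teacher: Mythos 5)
Your proof is correct and follows essentially the same route as the paper: contradiction, normalization to a probability density, Lemma \ref{T19} to produce $m+1$ well-separated sets carrying mass at least $\bar\epsilon$, then Lemma \ref{T18} and the gap $\lambda<c_N(m+1)$ to bound $J_\lambda$ from below. Your extra care in taking $\gamma_0=\min\{\bar\epsilon,\tfrac{1}{2(m+2)}\}$ to guarantee $\gamma_0<\tfrac{1}{m+1}$ is a harmless refinement the paper glosses over (it simply sets $\gamma_0=\bar\epsilon$).
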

\begin{proof} By contradiction there exist $\epsilon,\ r \in (0,1)$ and functions $u_{k} \in W_{0}^{1,N}(\Omega)$ so that $J_{\lambda}(u_{k})\to-\infty$ as $k\to+\infty$ and
\begin{equation}\label{E36}
\int_{\Omega\setminus \cup_{i=1}^{m} B_{r}(p_{i})}Ve^{\hat{u}_{k}} >\epsilon
\end{equation}
for all $p_{1},...,p_{m} \in \overline{\Omega}$, where $\hat{u}_{k}=u_{k}-\log\int_{\Omega}Ve^{u_{k}}$.
Since
$$\int_{\Omega\setminus \cup_{i=1}^{m} B_{r}(p_{i})}Ve^{\hat{u}_{k}}=
\int_{\Omega} Ve^{\hat{u}_{k}} -\int_{\Omega \cap \cup_{i=1}^{m} B_{r}(p_{i})}Ve^{\hat{u}_{k}}
=1-\int_{\Omega \cap \cup_{i=1}^{m} B_{r}(p_{i})}Ve^{\hat{u}_{k}},$$
by \eqref{E36} we get that
$$\int_{\Omega \cap \cup_{i=1}^{m}B _{r}(p_{i})}Ve^{\hat{u}_{k}}< 1-\epsilon$$ 
for all $m$-tuple $p_{1},\dots,p_{m} \in \overline{\Omega}$. Applying Lemma \ref{T19} with $l=m$ and $f=Ve^{\hat{u}_{k}}$, we find $\bar \epsilon,\bar r>0$ and $\bar{p}_{1},\dots,\bar{p}_{m+1}\in\overline{\Omega}$ so that
$$\int_{\Omega \cap B_{\bar{r}}(\bar{p}_{i})} Ve^{u_{k}} \geq  \bar{\epsilon} \int_{\Omega}Ve^{u_{k}},\qquad B_{2\bar{r}}(\bar{p}_{i})\cap B_{2\bar{r}}(\bar{p}_{j})=\emptyset \quad \forall\ i\neq j.$$
Applying Lemma \ref{T18} with $\Omega_{i}=\Omega \cap B_{\bar{r}}(\bar{p}_{i})$ for $i=1,\dots,m+1$, $\delta_{0}=2\bar{r}$ and $\gamma_{0}=\bar{\epsilon}$, it now follows that
$$\log\bigg(\int_{\Omega}Ve^{u_{k}} \bigg)\leq\frac{1}{Nc_{N}(m+1-\eta)} \|u\|_{W_{0}^{1,N}(\Omega)}^{N}+C$$ for all $\eta>0$, for some $C=C(\eta,\delta_{0},\gamma_{0},a,b)$.
Since $\lambda<c_{N}(m+1)$, we get that
$$J_{\lambda}(u_{k})= \frac{1}{N}\|u_{k}\|_{W_{0}^{1,N}(\Omega)}^{N}-\lambda\log\bigg(\int_{\Omega}Ve^{u_{k}}dx\bigg)\geq
\frac{1}{N} \left(1-\frac{\lambda}{c_{N}(m+1-\eta)}\right)\|u_{k}\|_{W_{0}^{1,N}(\Omega)}^{N}-C\lambda \geq -C \lambda$$
for $\eta>0$ small, in contradiction with $J_{\lambda}(u_{k})\to -\infty$ as $k \to +\infty$.
\end{proof}
\noindent The set $\mathcal{M}(\overline{\Omega})$ of all Radon measures on $\overline{\Omega}$ is a metric space with the Kantorovich-Rubinstein distance, which is induced by the norm 
$$\|\mu\|_{\ast}=\sup_{\|\phi \|_{Lip(\overline{\Omega})} \leq 1}  \int_{\Omega} \phi d\mu,\qquad \mu\in\mathcal{M}(\overline{\Omega}).$$ 
Lemma \ref{T20} can be re-phrased as
\begin{lm} \label{1122}
Let $\lambda\in\big(c_{N}m, c_{N}(m+1)\big)$, $m\in\mathbb{N}$. For any $\epsilon>0$ small there exists a large $L=L(\varepsilon)>0$ such that, for every $u\in W_{0}^{1,N}(\Omega)$ with $J_{\lambda}(u)\leq-L$, we have 
\begin{equation} \label{1050}
\mbox{dist}\bigg(\frac{Ve^{u}}{\int_{\Omega}Ve^{u}},\mathfrak{B}_{m}(\overline{\Omega})\bigg)\leq\epsilon.
\end{equation}
\end{lm}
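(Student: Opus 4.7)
\emph{Proof proposal.} The plan is to derive Lemma \ref{1122} as a direct topological reformulation of Lemma \ref{T20} in terms of the Kantorovich--Rubinstein distance. The idea is that the condition in Lemma \ref{T20}, i.e.\ that most of the mass of the probability measure $\mu_u:=\frac{Ve^u}{\int_\Omega Ve^u}$ is concentrated in small balls around at most $m$ points of $\overline{\Omega}$, is exactly what is needed to exhibit an element of $\mathfrak{B}_m(\overline{\Omega})$ close to $\mu_u$ in $\|\cdot\|_{\ast}$.

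First I would fix two parameters $r,\epsilon'\in(0,1)$, to be chosen at the end in terms of $\epsilon$ and $\text{diam}(\overline{\Omega})$, and apply Lemma \ref{T20} to obtain $L=L(\epsilon',r)>0$ so that, for any $u\in W^{1,N}_0(\Omega)$ with $J_\lambda(u)\leq -L$, there exist $p_{1,u},\dots,p_{m,u}\in\overline{\Omega}$ with
$$\int_{\Omega\setminus\cup_{i=1}^m B_r(p_{i,u})}Ve^u\,dx\leq \epsilon'\int_\Omega Ve^u\,dx.$$
Next I would construct an explicit candidate $\sigma\in\mathfrak{B}_m(\overline{\Omega})$ from the points $p_{i,u}$. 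Partition $\Omega$ into the disjoint sets $D_i:=B_r(p_{i,u})\setminus \cup_{j<i}B_r(p_{j,u})$ for $i=1,\dots,m$ and $D_0:=\Omega\setminus\cup_{i=1}^m B_r(p_{i,u})$, define $t_i:=\mu_u(D_i)$ for $i\geq 2$, and absorb the residual mass into $t_1:=\mu_u(D_1\cup D_0)$. Then $t_i\geq 0$, $\sum_{i=1}^m t_i=1$, and
$$\sigma:=\sum_{i=1}^m t_i\delta_{p_{i,u}}\in\mathfrak{B}_m(\overline{\Omega}).$$

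The main calculation is to estimate $\|\mu_u-\sigma\|_{\ast}$. Fix any test function $\phi$ on $\overline{\Omega}$ with $\|\phi\|_{Lip(\overline{\Omega})}\leq 1$. Since both $\mu_u$ and $\sigma$ are probability measures, replacing $\phi$ by $\phi-\phi(p_{1,u})$ does not change $\int\phi\,d(\mu_u-\sigma)$, so I may assume $\phi(p_{1,u})=0$ and $|\phi|\leq \text{diam}(\overline{\Omega})$ on $\overline{\Omega}$. Then I would split
$$\int_\Omega \phi\,d(\mu_u-\sigma)=\sum_{i=1}^m\int_{D_i}\bigl(\phi(x)-\phi(p_{i,u})\bigr)\,d\mu_u(x)+\int_{D_0}\bigl(\phi(x)-\phi(p_{1,u})\bigr)\,d\mu_u(x).$$
On each $D_i\subset B_r(p_{i,u})$ the Lipschitz bound gives $|\phi(x)-\phi(p_{i,u})|\leq r$, so the first sum is at most $r\cdot \mu_u(\cup_{i\geq 1}D_i)\leq r$, while the second term is controlled by $\text{diam}(\overline{\Omega})\cdot \mu_u(D_0)\leq \text{diam}(\overline{\Omega})\cdot\epsilon'$. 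Choosing $r:=\epsilon/2$ and $\epsilon':=\epsilon/(2\text{diam}(\overline{\Omega}))$, taking the supremum over $\phi$ yields $\|\mu_u-\sigma\|_{\ast}\leq \epsilon$, hence \eqref{1050}.

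There is no genuine analytic obstacle here: the real work was done in Lemmas \ref{T18}--\ref{T20}, and the present step is essentially bookkeeping. The only mild subtlety is handling the stray mass $\mu_u(D_0)$ while keeping $\sigma$ a probability measure, which is resolved by absorbing it into $t_1$ and then using the diameter of $\overline{\Omega}$ to bound the resulting error.
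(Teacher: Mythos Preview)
Your proposal is correct and follows essentially the same route as the paper: invoke Lemma \ref{T20}, build an explicit barycenter $\sigma$ supported on the concentration points, and estimate the Kantorovich--Rubinstein distance by splitting over the disjoint pieces $D_i$. The only difference is bookkeeping in how $\sigma$ is normalized: the paper sets $t_i=\mu_u(A_{r,i})/\mu_u\big(\cup_j B_r(p_j)\big)$ and pays for the renormalization with an extra $\frac{\epsilon}{4-\epsilon}$ term, whereas you absorb the stray mass $\mu_u(D_0)$ into $t_1$ and pay with a $\mathrm{diam}(\overline\Omega)\cdot\epsilon'$ term instead---both are equally valid and your version is arguably a touch cleaner.
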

\begin{proof} Given $\epsilon\in (0,2)$ and $r=\frac{\epsilon}{4}$, let $L=L(\frac{\epsilon}{4},r)>0$ be as given in Lemma \ref{T20}. For all $u\in W_{0}^{1,N}(\Omega)$ with $J_{\lambda}(u)\leq-L$, let us denote for simplicity as $p_{1},\dots,p_{m}\in\overline{\Omega}$ the corresponding points $p_{1,u},\dots ,p_{n,u}$ such that
\begin{equation}\label{E37}
\int_{\Omega\setminus\bigcup_{i=1}^{m}B_{r}(p_{i})}Ve^{u}\leq \frac{\epsilon}{4} \int_{\Omega}Ve^{u}.
\end{equation}
Define $\sigma\in\mathfrak{B}_{m}(\overline{\Omega})$ as 
$$\sigma=\sum_{i=1}^{m}t_{i}\delta_{p_{i}},\qquad  t_{i}=\frac{ \int_{A_{r,i}}Ve^{u}}{\int_{\Omega \cap \bigcup_{i=1}^{m}B_{r}(p_{i})}Ve^{u}},$$ 
where $A_{r,i}=(\Omega \cap B_{r}(p_{i})) \setminus\bigcup_{j=1}^{i-1}B_{r}(p_j)$. Since $A_{r,i}$, $i=1,\dots,m$, are disjoint sets with $\bigcup_{i=1}^{m}A_{r,i}=\Omega \cap \bigcup_{i=1}^{m}B_{r}(p_i)$, we have that $\sum_{i=1}^{m}t_{i}=1$ and
\begin{eqnarray*}
\bigg|\int_\Omega \phi \left[ Ve^u dx-(\int_\Omega V e^u) d\sigma \right] \bigg|&\leq& \bigg|\int_{\Omega\setminus\bigcup_{i=1}^{m}B_{r}(p_i)} Ve^{u}\phi \bigg|+\bigg|\int_{\Omega \cap \bigcup_{i=1}^{m}B_{r}(p_i)}Ve^{u}\phi-(\int_\Omega V e^u)\sum_{i=1}^{m}t_{i}\phi(p_i)\bigg|\\
&\leq & \frac{\epsilon}{4} \int_\Omega V e^u+\sum_{i=1}^{m} \bigg|\int_{A_{r,i}}Ve^{u}\phi-(\int_\Omega Ve^u) t_{i}\phi(p_i)\bigg|\\
&\leq& \frac{\epsilon}{4}\int_\Omega V e^u+\sum_{i=1}^{m} \int_{A_{r,i}}Ve^{u}|\phi-\phi(p_i)|
+ \bigg|\frac{\int_\Omega Ve^u}{\int_{\Omega \cap \bigcup_{i=1}^{m}B_{r}(p_i)}Ve^{u}}-1\bigg|\sum_{i=1}^{m}\int_{A_{r,i}}Ve^{u}\\
&\leq & \left(\frac{\epsilon}{4}+r +\frac{\epsilon}{4-\epsilon}\right) \int_\Omega V e^u
\end{eqnarray*}
in view of \eqref{E37}, $\|\phi\|_{Lip(\overline{\Omega})}\leq1$ and
$$\bigg|\frac{\int_\Omega V e^u}{\int_{\Omega \cap \bigcup_{i=1}^{m}B_{r}(p_i)}Ve^{u}}-1\bigg| \leq \frac{\epsilon}{4-\epsilon}.$$ 
Since there holds
$$\bigg|\int_\Omega \phi \left[ \frac{Ve^u dx}{\int_\Omega V e^u} -d\sigma \right] \bigg| \leq \epsilon$$
for all $\phi \in Lip(\overline{\Omega})$ with $\|\phi\|_{Lip(\overline{\Omega})}\leq1$, we have that
$$\| \frac{Ve^u}{\int_\Omega V e^u} -\sigma \|_\ast \leq \epsilon$$
for some $\sigma \in \mathfrak{B}_{m}(\overline{\Omega})$, and then 
$$\mbox{dist}\bigg(\frac{Ve^{u}}{\int_{\Omega}Ve^{u}},\mathfrak{B}_{m}(\overline{\Omega})\bigg)\leq\epsilon.$$
The proof is complete.
\end{proof}
\noindent When \eqref{1050} does hold, one would like to project $\frac{V e^u}{\int_\Omega V e^u}$ onto $\mathfrak{B}_{m}(\overline{\Omega})$. To avoid boundary points (which cause troubles in the construction of the map $\Phi$ below) we replace $\overline{\Omega}$ by its retract of deformation $K=\{x\in\Omega:\ \mbox{dist}(x,\partial\Omega)\geq \delta\}$, $\delta>0$ small. Since $\mathfrak{B}_{m}(K)$ is a retract of deformation of $\mathfrak{B}_{m}(\overline{\Omega})$, by \cite{BBB} there exists a projection map
$$\Pi_{m}:\{\sigma\in\mathcal{M}(\overline{\Omega}):\ dist(\sigma,\mathfrak{B}_{m}(\overline{\Omega}))<\epsilon_{0}\}\ \to \ \mathfrak{B}_{m}(K),\quad \epsilon_0>0 \hbox{ small},$$
which is continuous with respect to the Kantorovich-Rubinstein distance. Thanks to $\Pi_m$ and Lemma \ref{1122}, for $\epsilon \leq \epsilon_0$ there exist $L=L(\epsilon)>0$ large and a continuous map
$$\begin{array}{cccc}\Psi: & J_\lambda^{-L} &\to & \mathfrak{B}_{m}(K)\\
& u & \to & \Pi_m(\frac{Ve^u}{\int_\Omega Ve^u}). \end{array}$$
The key point now is to construct a continuous map $\Phi:\mathfrak{B}_{m}(K)\rightarrow J_{\lambda}^{-L}$ so that $\Psi \circ \Phi$ is homotopically equivalent to $\hbox{Id}_{\mathfrak{B}_{m}(K)}$. When $\mathfrak{B}_{m}(\overline{\Omega})$ is non contractible, the same is true for $\mathfrak{B}_{m}(K)$ and then for $J_\lambda^{-L}$ for $L>0$ large. Theorem \ref{thm0954} then follows by Lemmas \ref{T15} and \ref{T17}.

\medskip \noindent The construction of $\Phi$ relies on an appropriate choice of a one-parameter family of functions $\varphi_{\epsilon,\sigma}$, $\sigma\in\mathfrak{B}_{m}(K)$, modeled on the standard bubbles $U_{\epsilon,p}$, see \eqref{E16}. Letting $\chi \in C_{0}^{\infty}(\Omega)$ be so that $\chi=1$ in $\Omega_{\frac{\delta}{2}}=\{x\in\Omega:\ \mbox{dist}(x,\partial\Omega)>\frac{\delta}{2} \}$,  we define 
$$\varphi_{\epsilon,\sigma}(x)=\chi(x) \log\sum_{i=1}^{m}t_{i}\bigg(\frac{F_{N}}{(\epsilon^{\frac{N}{N-1}}+|x-p_{i}|^{\frac{N}{N-1}})^{N}V(p_{i})}\bigg),$$
where $\sigma=\displaystyle \sum_{i=1}^m t_i \delta_{p_i} \in\mathfrak{B}_{m}(K)$ and $\epsilon>0$. Since $\varphi_{\epsilon,\sigma}\in W_{0}^{1,N}(\Omega)$, the map $\Phi$ can be constructed as $\Phi_{\epsilon_0}$, $\epsilon_0>0$ small, where
$$\begin{array}{cccc}\Phi_\epsilon: &\mathfrak{B}_{m}(K) &\to &  J_\lambda^{-L}\\
& \sigma & \to & \varphi_{\epsilon,\sigma}. \end{array}$$
To map $\mathfrak{B}_{m}(K)$ into the very low sublevel $J_\lambda^{-L}$, the difficult point is to produce uniform estimates in $\sigma$ as $\epsilon \to 0$. We have  
\begin{lm}\label{T27}
There hold
\begin{enumerate}
\item there exist $C_{0}>0$ and $\epsilon_{0}>0$ so that
$$\bigg\|\frac{Ve^{\varphi_{\epsilon,\sigma}}}{\int_{\Omega}Ve^{\varphi_{\epsilon,\sigma}}}-\sigma\bigg\|_{\ast}\leq C_{0}\epsilon$$
for all $0<\epsilon\leq\epsilon_{0}$ and $\sigma \in \mathfrak{B}_{m}(K)$;
\item $J_{\lambda}(\varphi_{\epsilon,\sigma})\rightarrow-\infty$  as $\epsilon\rightarrow0$ uniformly in $\sigma\in\mathfrak{B}_{m}(K)$.
\end{enumerate}
\end{lm}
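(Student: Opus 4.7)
The plan is to derive both claims from direct scaling. Write
\[
h_i(x) = \frac{F_N}{(\epsilon^{N/(N-1)}+|x-p_i|^{N/(N-1)})^N V(p_i)}=\epsilon^{-N/(N-1)}V(p_i)^{-1}e^{U((x-p_i)/\epsilon)},
\]
so that $\varphi_{\epsilon,\sigma}=\chi\log f_{\epsilon}$ with $f_\epsilon=\sum_it_ih_i$. The two underlying facts are $\int_{\mathbb{R}^N} e^U=c_N$ and the concentration of each bubble near $p_i$ at scale $\epsilon$. Since $p_i\in K=\Omega_\delta$, every bubble center is kept at distance $\geq\delta/2$ from $\mathrm{supp}(\nabla\chi)\subset\Omega\setminus\Omega_{\delta/2}$, so the strip $\{\chi<1\}$ contributes only $O(1)$ corrections in all estimates below, uniformly in $\sigma\in\mathfrak{B}_m(K)$.

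For item (1), the change of variables $x=p_i+\epsilon y$ in each bubble integral gives
\[
\int_\Omega \phi V e^{\varphi_{\epsilon,\sigma}}\,dx=\epsilon^{-N/(N-1)}\sum_i t_i\int_{(\Omega-p_i)/\epsilon}\phi(p_i+\epsilon y)\frac{V(p_i+\epsilon y)}{V(p_i)}e^{U(y)}\,dy +O(1).
\]
The Lipschitz bounds on $\phi$ (from the definition of $\|\cdot\|_\ast$) and on $V$ (from \eqref{E15}) give an $O(\epsilon|y|)$ error in the integrand, globally integrable against $e^{U(y)}$ since $|y|e^U\sim|y|^{1-N^2/(N-1)}\in L^1(\mathbb{R}^N)$ for $N\geq 2$. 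Taking $\phi\equiv 1$ gives $\int Ve^{\varphi_{\epsilon,\sigma}}=\epsilon^{-N/(N-1)}(c_N+O(\epsilon))$; for $\phi$ with $\|\phi\|_{\mathrm{Lip}}\leq 1$ the same manipulation yields $\int\phi V e^{\varphi_{\epsilon,\sigma}}=\epsilon^{-N/(N-1)}(c_N\sum_it_i\phi(p_i)+O(\epsilon))$. Dividing and using $\sum_i t_i\phi(p_i)=\int\phi\,d\sigma$ (constants cancel since both measures are probabilities), we get claim (1).

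For item (2), the mass estimate already provides $-\lambda\log\int Ve^{\varphi_{\epsilon,\sigma}}=-\frac{\lambda N}{N-1}|\log\epsilon|+O(1)$; it remains to bound the Dirichlet term. On $\{\chi\equiv 1\}$, set $\alpha_i:=t_ih_i/f_\epsilon\in[0,1]$ so that $\sum\alpha_i=1$; convexity of $|\cdot|^N$ gives
\[
|\nabla\varphi_{\epsilon,\sigma}|^N=\Big|\sum_i\alpha_i\nabla\log h_i\Big|^N\leq\sum_i\alpha_i|\nabla\log h_i|^N\leq\sum_i|\nabla U_{\epsilon,p_i}|^N,
\]
while on $\{\chi<1\}$ the gradient is uniformly bounded (bubbles are far from their centers there). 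A spherical-coordinates computation from $|\nabla U|^N\sim(\tfrac{N^2}{N-1})^N|y|^{-N}$ at infinity, with coefficient $(\tfrac{N^2}{N-1})^N N\omega_N=\tfrac{N^2}{N-1}c_N$, yields $\int_\Omega|\nabla U_{\epsilon,p_i}|^N\,dx\leq\tfrac{N^2}{N-1}c_N|\log\epsilon|+O(1)$ uniformly in $p_i\in K$. Summing over the $m$ bubbles and dividing by $N$,
\[
J_\lambda(\varphi_{\epsilon,\sigma})\leq\tfrac{N}{N-1}(mc_N-\lambda)|\log\epsilon|+O(1)\to-\infty,
\]
since $\lambda>mc_N$.

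The main obstacle is uniformity on all of $\mathfrak{B}_m(K)$, which includes degenerate barycenters with vanishing $t_i$'s or coincident $p_i$'s. This is handled painlessly: the gradient bound only uses $\alpha_i\leq 1$ and the crude sum over all $m$ bubbles, so merging points or killing weights preserves the inequality (with slack), while the needed mass lower bound $\int Ve^{\varphi_{\epsilon,\sigma}}\geq c\,\epsilon^{-N/(N-1)}$ comes from the observation that the index with largest weight satisfies $t_i\geq 1/m$ together with $\int_{B_{\delta/2}(p_i)}h_i\geq \frac{c_N}{2V(p_i)}\epsilon^{-N/(N-1)}$ for small $\epsilon$, which is uniform via \eqref{E15} since $B_{\delta/2}(p_i)\subset\Omega_{\delta/2}$ whenever $p_i\in K$. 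Thus both estimates hold with $\sigma$-independent constants.
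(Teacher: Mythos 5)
Your proposal is correct and follows essentially the same route as the paper: part (1) is the same scaling computation, and part (2) reduces to the same two estimates $\log\int_\Omega Ve^{\varphi_{\epsilon,\sigma}}=\frac{N}{N-1}\log\frac{1}{\epsilon}+O(1)$ and $\frac{1}{N}\int_\Omega|\nabla\varphi_{\epsilon,\sigma}|^N\leq\frac{N}{N-1}c_Nm\log\frac{1}{\epsilon}+O(1)$, with the cutoff region handled identically. The only (harmless) variation is in the Dirichlet bound: you use Jensen's inequality to get $|\nabla\tilde\varphi_{\epsilon,\sigma}|^N\leq\sum_i|\nabla U_{\epsilon,p_i}|^N$ pointwise and integrate each bubble, whereas the paper uses the weighted-average bound $|\nabla\tilde\varphi_{\epsilon,\sigma}|\leq\frac{N^2}{N-1}d(x)^{-1}$ together with a partition into nearest-center cells $A_j$; both yield the same factor $m$ and the same leading constant.
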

\begin{proof}  Recall that 
$$U_{\epsilon,p}(x)=\log \bigg(\frac{F_{N}\epsilon^{\frac{N}{N-1}}}{(\epsilon^{\frac{N}{N-1}}+|x-p|^{\frac{N}{N-1}})^N} \bigg).$$
Fix $\phi \in Lip(\overline{\Omega})$ with $\|\phi\|_{Lip(\overline{\Omega})}\leq1$. Since $\varphi_{\epsilon,\sigma}$ is bounded from above in $\Omega \setminus \Omega_{\frac{\delta}{2}}$ uniformly in $\sigma$, we have that
\begin{eqnarray} \label{1419}
\int_{\Omega}Ve^{\varphi_{\epsilon,\sigma}} \phi&=&\epsilon^{-\frac{N}{N-1}}  
\sum_{i=1}^{m} \int_{\Omega_{\frac{\delta}{2}}}  \frac{t_i V \phi }{V(p_i)} e^{U_{\epsilon,p_i}}  +O(1) =\epsilon^{-\frac{N}{N-1}} \sum_{i=1}^m  \int_{B_{\frac{\delta}{2}}(p_{i})} \frac{t_i V \phi}{V(p_i)}  e^{U_{\epsilon,p_i}} +O(1)  \\
&=&\epsilon^{-\frac{N}{N-1}}\left( c_N \int_\Omega \phi d \sigma+O(\epsilon)\right)
\nonumber
\end{eqnarray}
as $\epsilon \to 0$ uniformly in $\phi$ and $\sigma$. We have used that
$$\int_{B_{\frac{\delta}{2}}(p_i)} \frac{V \phi}{V(p_i)} e^{U_{\epsilon,p_i}}=\int_{B_{\frac{\delta}{2 \epsilon}}(0)}  (\phi(p_i)+O(\epsilon |y|)) e^U= c_N \phi(p_i)+O(\epsilon)$$
does hold as $\epsilon \to 0$, uniformly in $\phi$ and $\sigma$, in view of \eqref{E15}. Therefore, there holds
$$\bigg| \int_\Omega \phi \left(\frac{Ve^{\varphi_{\epsilon,\sigma}}}{\int_{\Omega}Ve^{\varphi_{\epsilon,\sigma}}}dx-d\sigma\right) \bigg|\leq C_0 \epsilon$$
for all $\phi \in Lip(\overline{\Omega})$ with $\|\phi\|_{Lip(\overline{\Omega})}\leq1$, and then
$$\|\frac{Ve^{\varphi_{\epsilon,\sigma}}}{\int_{\Omega}Ve^{\varphi_{\epsilon,\sigma}}}-\sigma\|_\ast \leq C_0 \epsilon$$
for all $\sigma \in \mathfrak{B}_{m}(K)$. Part (1) is proved.

\medskip \noindent For part (2), it is enough to show that
\begin{eqnarray}\label{E36qq}
&& \log\displaystyle\int_{\Omega}Ve^{\varphi_{\epsilon,\sigma}}=\frac{N}{N-1}\log\frac{1}{\epsilon}+O(1)\\
\label{E37qq}
&& \frac{1}{N}\displaystyle\int_{\Omega}|\nabla\varphi_{\epsilon,\sigma}|^{N} \leq \frac{N}{N-1}c_{N}m\log\frac{1}{\epsilon}+O(1)
\end{eqnarray}
as $\epsilon\rightarrow0$ uniformly in $\sigma\in\mathfrak{B}_{m}(K)$, in view of $\lambda>mc_{N}$. Estimate \eqref{E36qq} follows by \eqref{1419} with $\phi=1$. As far as \eqref{E37qq} is concerned, let us set $\varphi_{\epsilon,\sigma}=\chi \tilde{\varphi}_{\epsilon,\sigma}$. All the estimates below are uniform in $\sigma$. Since
$$\nabla \tilde{\varphi}_{\epsilon,\sigma}=- \frac{N^2}{N-1} \frac{\sum_{i=1}^{m}t_{i}V(p_{i})^{-1}(\epsilon^{\frac{N}{N-1}}+|x-p_{i}|^{\frac{N}{N-1}})^{-(N+1)}|x-p_{i}|^{\frac{N}{N-1}-2}(x-p_{i})}{\sum_{i=1}^{m}t_{i}V(p_{i})^{-1}(\epsilon^{\frac{N}{N-1}}+|x-p_{i}|^{\frac{N}{N-1}})^{-N}},$$
we have that $\|\tilde{\varphi}_{\epsilon,\sigma}\|_{C^1(\Omega\setminus\Omega_{\frac{\delta}{2}})}=O(1)$ and then
$$|\nabla\varphi_{\epsilon,\sigma}|=O(1)$$ 
in $\Omega\setminus\Omega_{\frac{\delta}{2}}$. Therefore we can write that
\begin{equation} \label{pp1}
\frac{1}{N}\int_{\Omega}|\nabla\varphi_{\epsilon,\sigma}|^{N}=\frac{1}{N}\int_{\Omega_{\frac{\delta}{2}}} |\nabla\tilde{\varphi}_{\epsilon,\sigma}|^{N}+O(1).
\end{equation}
We estimate $|\nabla\tilde{\varphi}_{\epsilon,\sigma}|$ in two different ways:\\
(i) $|\nabla\tilde{\varphi}_{\epsilon,\sigma}|(x)\leq\frac{N^{2}}{N-1}\frac{1}{d(x)},$ where $d(x)=\min \{|x-p_{i}|:, i=1,...,m\}$;\\
(ii) $|\nabla\tilde{\varphi}_{\epsilon,\sigma}|\leq\frac{N^{2}}{N-1}C_{0}\epsilon^{-1}$ in view of
 $$\frac{\epsilon|x-p_{i}|^{\frac{N}{N-1}-1}}{\epsilon^{\frac{N}{N-1}}+|x-p_{i}|^{\frac{N}{N-1}}}\leq C_{0}$$
by the Young's inequality. By estimate (ii) we have that
\begin{equation} \label{pp2}
\int_{\Omega_{\frac{\delta}{2}}}|\nabla\tilde{\varphi}_{\epsilon,\sigma}|^{N}=\int_{\Omega_{\frac{\delta}{2}}\setminus\bigcup_{j=1}^{m}B_{\epsilon}(p_{j})}|\nabla\tilde{\varphi}_{\epsilon,\sigma}|^{N}+O(1)\leq\sum_{j=1}^{m}\int_{A_{j}\setminus B_{\epsilon}(p_{j})}|\nabla\tilde{\varphi}_{\epsilon,\sigma}|^{N}+O(1)
\end{equation}
in view of $\Omega_{\frac{\delta}{2}}\setminus\bigcup_{j=1}^{m}B_{\epsilon}(p_{j})\subset\bigcup_{j=1}^{m}\bigg(A_{j}\setminus B_{\epsilon}(p_{j})\bigg)$, where $A_j=\{x\in\Omega_{\frac{\delta}{2}}:\ |x-p_j|=d(x) \}$. 
Since by estimate (i) we have that
$$ \int_{A_{j}\setminus B_{\epsilon}(p_{j})}|\nabla\tilde{\varphi}_{\epsilon,\sigma}|^{N} \leq (\frac{N^{2}}{N-1})^{N}  \int_{A_{j}\setminus B_{\epsilon}(p_{j})}\frac{1}{|x-p_{j}|^{N}} \leq (\frac{N^{2}}{N-1})^{N} \int_{B_R(0)\setminus B_{\epsilon}(0)}\frac{1}{|x|^{N}}+O(1)=\frac{N^{2}}{N-1}c_{N}\log\frac{1}{\epsilon}+O(1)$$
in terms of $R=\hbox{diam}\ \Omega$, by \eqref{pp1}-\eqref{pp2} we deduce the validity of \eqref{E37qq}. The proof is complete.
\end{proof}

\medskip \noindent In order to prove that $\Psi\circ\Phi$ is homotopically equivalent to $\hbox{Id}_{\mathfrak{B}_{m}(K)}$, we construct an explicit homotopy $H$ as follows
$$H:(0,1]\longrightarrow C\big((\mathfrak{B}_{m}(K), \|\cdot\|_{\ast});(\mathfrak{B}_{m}(K),\|\cdot\|_{\ast})\big),\ t\mapsto H(t)=\Psi\circ\Phi_{t\varepsilon_{0}}.$$
The map $H$ is continuous in $(0,1]$ with respect to the norm $\|\cdot\|_{\infty,\mathfrak{B}_{m}(K)}$.  In order to conclude, we need to prove that there holds
$$\lim_{t \to 0} \|H(t)-\mbox{Id}_{\mathfrak{B}_{m}(K)}\|_{\infty,\mathfrak{B}_{m}(K)}=\lim_{\epsilon \to 0} \sup_{\sigma\in\mathfrak{B}_{m}(K) } \|\Psi\circ\Phi_{\epsilon}(\sigma)-\sigma\|_{\ast}=0,$$
where $\epsilon=t\epsilon_{0}$. Since $\Pi_m(\sigma)=\sigma$ and $\mathfrak{B}_{m}(K)$ is a compact set in $\left(\mathcal{M}(\overline{\Omega}), \|\cdot \|_{\ast} \right)$, by the continuity of $\Pi_{m}$ in $\|\cdot\|_\ast$ and Lemma \ref{T27}-(1) we deduce that
$$\|\Psi\circ\Phi_{\epsilon}(\sigma)-\sigma\|_{\ast}
=\|\Pi_m \left( \frac{Ve^{\varphi_{\epsilon,\sigma}}}{\int_{\Omega}Ve^{\varphi_{\epsilon,\sigma}}} \right)-\Pi_m(\sigma)\|_{\ast} \to 0$$
as $\epsilon \to 0$, uniformly in $\sigma\in\mathfrak{B}_{m}(K)$. Finally, we extend $H(t)$ at $t=0$ in a continuous way by setting $H(0)=id_{\mathfrak{B}_{m}(K)}$.

\medskip \noindent Let us now discuss the main assumption in Theorem \ref{thm0954}. In \cite{007} it is claimed that $\mathfrak{B}_{m}(\Omega)$ is non contractible for all $m\geq1$ if $\Omega$ is non contractible too, as it arises for closed manifolds \cite{0120}. However, by the techniques in \cite{050} it is shown in \cite{kal} that $\mathfrak{B}_{m}(X)$ is contractible for all $m \geq 1$, for a non contractible topological and acyclic (i.e. with trivial $\mathbb{Z}-$homology) space $X$. A concrete example is represented by the punctured Poincar\'{e} sphere, and it is enough to take a tubular neighborhood $\Omega$ of it to find a counterexample to the claim in \cite{007}. A sufficient condition for the main assumption in Theorem \ref{thm0954} is the following:
\begin{thm} \cite{kal} \label{T22}
Assume that $X$ is homotopically equivalent to a finite simplicial complex. Then $\mathfrak{B}_{m}(X)$ is non contractible for all $m\geq2$ if and only if $X$ is not acyclic (i.e. with non trivial $\mathbb{Z}$-homology).
\end{thm}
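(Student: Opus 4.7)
The plan is to analyse $\mathfrak{B}_{m}(X)$ by means of its natural filtration by support cardinality, together with the Dold--Thom theorem. I would first replace $X$ by a homotopy-equivalent finite simplicial complex, which endows $\mathfrak{B}_{m}(X)$ with a compact CW structure inherited from the quotient map $\Delta^{m-1}\times X^{m}\to \mathfrak{B}_{m}(X)$ sending $(t_{1},\dots,t_{m};p_{1},\dots,p_{m})$ to $\sum_{i=1}^{m}t_{i}\delta_{p_{i}}$. The filtration $\mathfrak{B}_{1}(X)\subset\mathfrak{B}_{2}(X)\subset\dots\subset\mathfrak{B}_{m}(X)$ by the number of atoms has quotients $\mathfrak{B}_{k}(X)/\mathfrak{B}_{k-1}(X)$ that, using the combinatorial techniques of \cite{050}, can be identified with a $\Sigma_{k}$-equivariant iterated smash product of copies of $X$ smashed with the open simplex $\mathring\Delta^{k-1}$.

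For the implication ``$X$ not acyclic $\Rightarrow \mathfrak{B}_{m}(X)$ non-contractible'', I would pass to $\mathfrak{B}_{\infty}(X) := \bigcup_{m}\mathfrak{B}_{m}(X)$ and compare it with the infinite symmetric product $\mathrm{SP}^{\infty}(X)$. A natural weak equivalence arises by contracting the simplex coordinates, which are convex cells, and the Dold--Thom theorem then yields $\pi_{n}(\mathfrak{B}_{\infty}(X)) \cong \tilde H_{n}(X;\mathbb{Z})$ for all $n\geq 1$. If $\tilde H_{n}(X)\neq 0$ for some $n$, I would produce a concrete non-trivial homology class already at the stage $m=2$: given a cycle $c$ in $X$ representing a non-zero class, the ``path suspension'' $\{t\delta_{p}+(1-t)\delta_{q}: p,q\in |c|,\ t\in[0,1]\}\subset\mathfrak{B}_{2}(X)$ gives a class in $\tilde H_{n+1}(\mathfrak{B}_{2}(X))$ whose non-triviality can be detected either by the spectral sequence associated with the filtration or by the collapse map onto the top filtration quotient. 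Since the inclusions $\mathfrak{B}_{2}(X)\hookrightarrow \mathfrak{B}_{m}(X)$ preserve such classes, $\mathfrak{B}_{m}(X)$ is non-contractible for every $m\geq 2$.

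For the converse, if $X$ is acyclic then Dold--Thom makes $\mathrm{SP}^{\infty}(X)$ weakly contractible, and the comparison above transfers the conclusion to $\mathfrak{B}_{\infty}(X)$. I would then prove by induction on $m$ that each inclusion $\mathfrak{B}_{m-1}(X)\hookrightarrow \mathfrak{B}_{m}(X)$ is a homotopy equivalence: its cofiber is the smash power of $X$ modulo the $\Sigma_{m}$-action described in Step~1, which is acyclic since tensor powers of a trivial reduced homology group are trivial and the $\Sigma_{m}$-quotient preserves acyclicity through a transfer/averaging argument. Consequently $\mathfrak{B}_{m}(X)$ has vanishing reduced $\mathbb{Z}$-homology. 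To upgrade this to contractibility I would use Whitehead's theorem after verifying simple connectedness of $\mathfrak{B}_{m}(X)$ for $m\geq 2$: any loop can be pushed into the interior of the top cell by redistributing the weights across the simplex, where the convex structure of $\mathring\Delta^{m-1}$ contracts it.

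The main obstacle, shared by both directions, is the precise identification of the filtration quotients $\mathfrak{B}_{k}(X)/\mathfrak{B}_{k-1}(X)$ and the control of the $\Sigma_{k}$-action on the boundary faces of $\Delta^{k-1}$, where weights degenerate and atoms coalesce. This equivariant combinatorial analysis is exactly the content of the techniques of \cite{050}, and the verification that the spectral sequence differentials behave as expected --- in particular, killing or preserving the relevant classes --- is the real technical heart of the argument. A secondary subtlety worth flagging is that acyclic $X$ need not be simply connected (as in the punctured Poincar\'e sphere), so the triviality of $\pi_{1}(\mathfrak{B}_{m}(X))$ truly requires exploiting the free simplex coordinate and is not inherited from $X$.
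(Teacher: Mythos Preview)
The paper does not contain a proof of this statement: Theorem~\ref{T22} is merely quoted from \cite{kal} as a black box, with no argument given. There is therefore nothing in the paper to compare your proposal against.

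That said, a few comments on your sketch are in order. The broad strategy --- analysing the filtration $\mathfrak{B}_{1}(X)\subset\dots\subset\mathfrak{B}_{m}(X)$ and identifying the successive quotients as symmetric smash powers via the techniques of \cite{050} --- is indeed the one pursued in \cite{kal}. However, several of your intermediate steps are not quite right as stated. The asserted weak equivalence between $\mathfrak{B}_{\infty}(X)$ and $\mathrm{SP}^{\infty}(X)$ obtained by ``contracting the simplex coordinates'' is not a map in the obvious sense: collapsing the weights does not by itself produce a well-defined continuous map to the symmetric product, and the actual comparison in \cite{050,kal} proceeds instead through an explicit homological analysis of the filtration quotients rather than a direct Dold--Thom argument. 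Your transfer/averaging step for the acyclic direction is also suspect over $\mathbb{Z}$, since transfer arguments typically require the order of $\Sigma_{m}$ to be invertible; one needs the finer identification of the quotients to avoid this. Finally, the simple-connectedness claim (``redistribute weights and contract in $\mathring\Delta^{m-1}$'') does not address the $X$-coordinates of a loop at all, and this is precisely the delicate point when $X$ itself is not simply connected, as in the punctured Poincar\'e sphere example mentioned in the paper. You have correctly flagged this as a subtlety, but the sketch you give does not resolve it.
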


{\section*{Appendix}
\setcounter{thm}{0}
\setcounter{equation}{0}
\setcounter{subsection}{0}
\renewcommand{\thesection}{A}
\noindent Let us collect here some useful regularity estimates which have been frequently used throughout the paper. Concerning $L^\infty-$estimates, the general interior estimates in \cite{41} are used here to derive also boundary estimates for solutions $u \in W^{1,N}_c(\Omega)=\{u \in W^{1,N}(\Omega):\ u\mid_{\partial \Omega}=c \}$, $c \in \mathbb{R}$, through the \emph{Schwarz reflection principle}. 

\medskip \noindent Given $x_{0}\in\partial\Omega$, we can find a smooth diffeomorphism $\psi$ from a small ball $B \subset \mathbb{R}^N$, $0\in B$, into a neighborhood $V$ of $x_0$ in $\mathbb{R}^{N}$ so that $\psi(B \cap\{y_{N}=0\})=V\cap\partial\Omega$ and $\psi(B^+)=V\cap \Omega$, where $B^+=B \cap\{y_{N}>0\}$.
Letting $u_0\in W_{c}^{1,N}(\Omega)$ be a critical point of 
$$\frac{1}{p}\int_{\Omega}|\nabla u|^N-\int_{\Omega}f u,\quad u \in W_{c}^{1,N}(\Omega),$$ 
then $v_0=u_0 \circ \psi$ is a critical point of
$$I(v)=\int_{B^{+}}\left[\frac{1}{N}| A(y)\nabla v|^N-fv \right]|\det\nabla\psi|,\quad v \in \mathcal{V},$$
in view of $|\nabla u|^N \circ \psi=|A \nabla v|^N$ in $B^+$ for $v=u\circ \psi$, where $ A(y)=(D\psi^{-1})^t (\psi(y))$ is an invertible $N\times N$ matrix for all $y \in B^+$ and 
$$\mathcal{V}=\{v\in W^{1,N}(B^{+}): v=c \ \mbox{on}\ y_{N}=0\ \mbox{and}\ v=u_0\circ \psi \ \mbox{on } \partial B \cap\{y_N>0\} \}.$$
In the sequel, $g_{\sharp}$ and $g^{\sharp}$ denote the odd and even extension in $B$ of a function $g$ defined on $B^+$, respectively. Decomposing the matrix $A$ as
$$A=
\left(
\begin{array}{c|c}
A' & a_1 \\ \hline a_2 & a_{NN}
\end{array}
\right)
$$
with $a_1,a_2:B^+ \to \mathbb{R}^{N-1}$, for $y \in B$ let us introduce  
$$A^\sharp=
\left(
\begin{array}{c|c}
(A')^{\sharp} & (a_1)_{\sharp} \\ \hline (a_2)_{\sharp} & (a_{NN})^{\sharp}
\end{array}
\right).$$
The odd reflection $(v_0-c)_{\sharp}+c \in W^{1,N}(B)$ is a weak solution in $B$ of
$$-\mbox{div}\  \mathcal{A}(y,\nabla v)=(f |\det\nabla\psi|)_{\sharp},$$
where $\mathcal{A}:\, (y,p) \in B \times \mathbb{R}^N \to |\det\nabla\psi|^\sharp |A^\sharp(y) p|^{N-2} [(A^\sharp)^t A^\sharp](y) p \in \mathbb{R}^N.$ In view of the invertibility of $A(y)$ for all $y\in B^+$, the map $\mathcal{A}$ satisfies 
\begin{equation}\label{E11qq}
|\mathcal{A}(y,p)|\leq a|p|^{N-1},\quad \langle p, \mathcal{A}(y,p) \rangle \geq a^{-1}|p|^{N}\end{equation}
for all $y \in B$ and $p \in \mathbb{R}^N$, for some $a>0$. Since $2c-u\leq u$ when $u\geq c$, thanks to \eqref{E11qq} we can now apply the general local interior estimates of J. Serrin in \cite{41} to get:
\begin{thm}\label{T5}
Let $u\in W_{loc}^{1,N}(\Omega)$ be a weak solution of 
\begin{equation} \label{0857}
-\Delta_N u=f \quad \mbox{in}\ \Omega.
\end{equation}
Assume that $f\in L^{\frac{N}{N-\epsilon}}(\Omega\cap B_{2R})$, $0<\epsilon\leq 1$, and $u \in W^{1,N}(\Omega \cap B_{2R})$ satisfies $u=c$ on $\partial \Omega \cap \overline{B_{2R}}$, $u \geq c$ in $\Omega \cap B_{2R}$ for some $c \in \mathbb{R}$ if $\partial \Omega \cap \overline{B_{2R}}\neq \emptyset$. Then, the following estimates do hold:
\begin{eqnarray*}
&&\|u^{+}\|_{L^{\infty}(\Omega\cap B_R)} \leq C (\|u^{+}\|_{L^N(\Omega\cap B_{2R})}+1)\\
&&\|u\|_{L^{\infty}(\Omega\cap B_R) } \leq C(\|u\|_{L^N(\Omega\cap B_{2R})}+1) \quad (\hbox{if }c=0)
\end{eqnarray*}
for some $C=C\left(N,a,\epsilon,R, \|f\|_{L^{\frac{N}{N-\epsilon}}(\Omega\cap B_{2R})} \right).$
\end{thm}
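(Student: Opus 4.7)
\textbf{Proof plan (of Theorem \ref{T5}).} The strategy is exactly the one set up in the paragraphs preceding the statement: reduce the estimate, via a boundary-flattening diffeomorphism and an odd Schwarz-type reflection, to the interior $L^{\infty}$-estimates of Serrin \cite{41} applied to a quasilinear divergence-form operator whose coefficients satisfy the structural bounds \eqref{E11qq}.

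First I would handle the \emph{interior case}: if $\partial\Omega\cap\overline{B_{2R}}=\emptyset$, then \eqref{0857} holds in the full ball $B_{2R}$ and the estimates in \cite{41}, with structural constant $a=1$ coming from the $N$-Laplacian, apply directly to both $u^+$ and $u$. In the \emph{boundary case}, I would cover the compact set $\overline{\Omega\cap B_R}$ by finitely many interior balls (handled as above) together with boundary charts $\psi:B\to V$ of the type described in the preamble, one around each point of $\partial\Omega\cap\overline{\Omega\cap B_R}$. On such a chart, set $v_0=u\circ\psi\in W^{1,N}(B^+)$ and form the reflected function $w=(v_0-c)_{\sharp}+c\in W^{1,N}(B)$. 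As explained just before the statement, $w$ weakly solves
\[
-\mathrm{div}\,\mathcal{A}(y,\nabla w)=(f\,|\det\nabla\psi|)_{\sharp}\quad\text{in }B,
\]
where $\mathcal{A}$ obeys \eqref{E11qq} with a constant $a$ uniform over the chart, and the right-hand side lies in $L^{N/(N-\epsilon)}(B)$ with norm controlled by $\|f\|_{L^{N/(N-\epsilon)}(\Omega\cap B_{2R})}$. Serrin's local interior $L^{\infty}$-estimate then yields, for a smaller concentric ball $B'\subset\subset B$,
\[
\|w^+\|_{L^{\infty}(B')}\leq C\left(\|w^+\|_{L^N(B)}+1\right).
\]

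To transfer this bound back to $u$, I would use that $w=v_0$ on $B^+$ while $w=2c-v_0\leq c$ on $B^-$ (since $v_0\geq c$ there), so that $w^+\leq\max(v_0^+,c^+)$ pointwise on $B$. Because $u\geq c$ forces $c^+\leq u^+$ a.e.\ in $\Omega\cap B_{2R}$ (checking the cases $c\geq 0$ and $c<0$ separately), the change of variables via $\psi$ and the uniform bounds on $|\det\nabla\psi|$ give $\|w^+\|_{L^N(B)}\leq C\|u^+\|_{L^N(\Omega\cap B_{2R})}+C'$. Pulling the sup-bound back through $\psi^{-1}$ and combining the finitely many interior and boundary charts produces the first estimate. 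For the second estimate, the hypothesis $c=0$ together with $u\geq c$ forces $u=u^+$ in $\Omega\cap B_{2R}$, and the first estimate immediately gives the full $L^\infty$-bound on $u$.

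The main obstacle is the bookkeeping around the odd reflection: one must verify that $w$ really solves the claimed divergence-form equation weakly across the hyperplane $\{y_N=0\}$, and that the reflected matrix field $\mathcal{A}$ satisfies \eqref{E11qq} uniformly, independently of the chart. Both points rely on the evenness under $y_N\to -y_N$ of $|\det\nabla\psi|$ and of the symmetric matrix $(A^{\sharp})^{t}A^{\sharp}$, which is already recorded in the preamble; once this is in place, Serrin's theorem \cite{41} is invoked as a black box and the proof is essentially a covering argument.
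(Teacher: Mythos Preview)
Your proposal is correct and follows exactly the approach the paper sets up in the paragraphs preceding the statement: flatten the boundary, take the odd Schwarz reflection $(v_0-c)_\sharp+c$, verify the structural bounds \eqref{E11qq} for the reflected operator $\mathcal A$, and invoke Serrin's interior estimates \cite{41} as a black box. The only cosmetic difference is that the paper controls the reflected part via the single observation $2c-u\le u$ when $u\ge c$ (i.e.\ the odd reflection is dominated by the even one), whereas you route through $w^+\le\max(v_0^+,c^+)$ and $c^+\le u^+$; both arrive at the same $L^N$-control, and your reduction of the $c=0$ case to the first estimate via $u=u^+$ is fine.
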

\noindent Since the Harnack inequality in \cite{41} is very general, it can be applied in particular when $\mathcal A$ satisfies \eqref{E11qq}, by allowing us to treat also boundary points through the \emph{Schwarz reflection principle}. The following statement is borrowed from \cite{pucser}:
\begin{thm}\label{T7}
Let $u\in W_{loc}^{1,N}(\Omega)$ be a nonnegative weak solution of \eqref{0857}, where $f\in L^{\frac{N}{N-\epsilon}}(\Omega)$, $0<\epsilon\leq 1$. Let $\Omega'\subset \Omega$ be a sub-domain of $\Omega$. Assume that $u \in W^{1,N}(\Omega \cap \Omega')$ satisfies $u=0$ on $\partial \Omega \cap \overline{\Omega'}$. Then, there exists $C=C(N,\epsilon,\Omega')$ so that
$$\sup_{\Omega'}u\leq C\left(\inf_{\Omega'}u+
\|f\|^{\frac{1}{N-1}}_{L^{\frac{N}{N-\epsilon}}(\Omega)} \right).$$
\end{thm}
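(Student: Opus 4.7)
\medskip \noindent \emph{Proof plan (of Theorem \ref{T7}).}\\
The plan is to combine Serrin's classical Harnack inequality with the Schwarz reflection set up immediately before Theorem \ref{T5}, and then to chain local estimates across the connected sub-domain $\Omega'$. Since $\overline{\Omega'}$ is compact, a standard covering plus chaining argument reduces the problem to producing, for every $x_0 \in \overline{\Omega'}$, a local Harnack of the form
\[
\sup_{B_{r_0/2}(x_0)\cap \Omega} u \leq C_0\bigl(\inf_{B_{r_0/2}(x_0)\cap \Omega} u +\|f\|^{1/(N-1)}_{L^{N/(N-\epsilon)}(\Omega)}\bigr)
\]
with a fixed small radius $r_0>0$ depending on $\Omega'$ and a constant $C_0$ depending only on $N,\epsilon,\Omega'$.

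\medskip \noindent For $x_0$ with $\mbox{dist}(x_0,\partial\Omega)\geq r_0$, the ball $B_{r_0}(x_0)$ sits inside $\Omega$ and the local estimate is precisely Serrin's interior Harnack inequality \cite{41} applied to the nonnegative solution $u$ of $-\Delta_N u=f$, with the $f$-contribution in the stated $L^{N/(N-\epsilon)}$ norm.

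\medskip \noindent For $x_0 \in \partial \Omega \cap \overline{\Omega'}$, I would use the local straightening diffeomorphism $\psi:B\to V$ recalled in the appendix. Setting $v_0=u\circ \psi$, the condition $u=0$ on $\partial \Omega \cap V$ yields $v_0=0$ on $B \cap \{y_N=0\}$, so its odd reflection $v=(v_0)_\sharp \in W^{1,N}(B)$ is a weak solution in the whole ball $B$ of
\[
-\mbox{div}\, \mathcal A(y,\nabla v)=(f|\det\nabla \psi|)_\sharp,
\]
with the operator $\mathcal A$ constructed before Theorem \ref{T5} satisfying the structure conditions \eqref{E11qq}. Serrin's Harnack inequality \cite{41}, in the general form for operators obeying \eqref{E11qq} recorded in \cite{pucser}, then applies on a fixed subregion of $B$, and pulling back via $\psi$ delivers the desired local Harnack on $\Omega \cap B_{r_0/2}(x_0)$. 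Combined with Step 2 via the chaining argument, this completes the proof.

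\medskip \noindent The main obstacle is the last step: the odd reflection $v$ changes sign, whereas Serrin's Harnack in its customary statement needs a nonnegative solution. The way around is to apply separately the weak Harnack inequality for supersolutions and the sup estimate for subsolutions to the positive and negative parts of $v$, whose supports are the half-balls $B^+$ and $B^-$, and to combine them into a two-sided bound for $v_0$ on $B^+_{1/2}$. The essential structural input that makes this reflection trick legitimate is the verification—already carried out in the discussion preceding Theorem \ref{T5}, thanks to the invertibility of the matrix $A(y)$ in $\overline{B^+}$—that the reflected operator $\mathcal A$ really satisfies \eqref{E11qq} uniformly in $y\in B$, which is exactly the hypothesis under which Serrin's general Harnack inequality from \cite{41} is valid.
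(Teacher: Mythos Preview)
The paper does not supply a proof of this theorem: the sentence immediately preceding the statement says ``The following statement is borrowed from \cite{pucser}'', and the only methodological hint is the remark that Serrin's Harnack inequality from \cite{41} combined with the Schwarz reflection principle allows one to treat boundary points. Your plan---interior Harnack from \cite{41}, reflection across $\partial\Omega$ for boundary balls via the map $\psi$ and the operator $\mathcal A$ satisfying \eqref{E11qq}, then a covering/chaining argument over the connected $\Omega'$---is exactly the approach the paper gestures at, so there is no in-paper proof to compare against and your sketch is already more detailed than what the text provides.

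One correction to your final paragraph. The workaround for the sign change is not quite right as stated: for operators of $N$-Laplace type the positive and negative parts $v^{\pm}$ of a solution are \emph{sub}solutions, not supersolutions, so the weak Harnack inequality does not apply to them. Two clean fixes are available. First, the local maximum principle for subsolutions requires no sign assumption, so it applies directly to the sign-changing reflected function $v$ on the full ball $B$ and yields $\sup_{B^+_{1/2}} v_0 \leq C\bigl(\|v_0\|_{L^p(B^+)}+k\bigr)$ with $k=\|f\|^{1/(N-1)}_{L^{N/(N-\epsilon)}}$; the $L^p$ norm of $v_0$ is then controlled via the interior weak Harnack applied to the nonnegative solution $v_0$ on balls contained in $B^+$. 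Second, and more simply, whenever $\overline{\Omega'}$ actually meets $\partial\Omega$ the boundary condition $u=0$ forces $\inf_{\Omega'} u=0$, so near boundary points the desired inequality reduces to a pure sup bound and the supersolution half of the argument is unnecessary there. Either way the overall strategy you outline goes through.
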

\noindent By choosing $\Omega'=\Omega$ we deduce that
\begin{cor} \label{2002}
Let $u\in W^{1,N}_0(\Omega)$ be a weak solution of $-\Delta_{N} u=f$ in $\Omega$, where $f\in L^{\frac{N}{N-\epsilon}}(\Omega)$, $0<\epsilon\leq 1$. Then, there exists a constant $C=C(N,\epsilon,\Omega)$ such that 
$$\|u\|_{L^\infty(\Omega)}\leq C
\|f\|^{\frac{1}{N-1}}_{L^{\frac{N}{N-\epsilon}}(\Omega)}.$$
\end{cor}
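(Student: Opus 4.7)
The plan is to reduce the sign-changing case to the nonnegative-solution case covered by Theorem~\ref{T7}, via comparison with an auxiliary function. Since $-\Delta_N$ is nonlinear, we cannot linearly decompose $u=u^+-u^-$ and treat each piece as a solution of the same equation; the cleanest workaround is to dominate $|u|$ by a nonnegative solution of the Dirichlet problem with right-hand side $|f|$.

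Concretely, I would introduce the unique weak solution $v\in W^{1,N}_0(\Omega)$ of $-\Delta_N v=|f|$ in $\Omega$ with $v=0$ on $\partial\Omega$; existence and uniqueness follow from standard monotone operator theory, since $|f|\in L^{N/(N-\epsilon)}(\Omega)\hookrightarrow (W^{1,N}_0(\Omega))^\ast$. Testing this equation against $(-v)^+\in W^{1,N}_0(\Omega)$ immediately yields $v\geq 0$ almost everywhere. Next, since $-\Delta_N u=f\leq |f|=-\Delta_N v$ and $u=v=0$ on $\partial\Omega$, I test the resulting weak inequality against $(u-v)^+\in W^{1,N}_0(\Omega)$ and invoke the strict monotonicity of $X\mapsto |X|^{N-2}X$ (with constant $d_N>0$ provided by Lemma~\ref{T4}) to conclude $u\leq v$ in $\Omega$. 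Replacing $u$ by $-u$, which solves $-\Delta_N(-u)=-f\leq |f|$ with zero boundary trace, gives $-u\leq v$ by the same argument, and therefore $|u|\leq v$ a.e.\ in $\Omega$.

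It then suffices to bound $\|v\|_{L^\infty(\Omega)}$. Applying Theorem~\ref{T7} with the choice $\Omega'=\Omega$ to the nonnegative weak solution $v$ (the required trace condition $v=0$ on $\partial\Omega\cap\overline{\Omega'}=\partial\Omega$ holds since $v\in W^{1,N}_0(\Omega)$), together with the continuity of $v$ up to $\partial\Omega$ guaranteed by the standard regularity theory for quasilinear elliptic equations with $L^p$ data (so that $\inf_\Omega v=0$), yields
$$\|v\|_{L^\infty(\Omega)}\leq C\bigl(\inf_\Omega v+\|f\|_{L^{N/(N-\epsilon)}(\Omega)}^{1/(N-1)}\bigr)=C\|f\|_{L^{N/(N-\epsilon)}(\Omega)}^{1/(N-1)},$$
and combining this with $|u|\leq v$ gives the desired bound on $\|u\|_{L^\infty(\Omega)}$. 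I do not foresee any substantive obstacle; the only mildly delicate check is the identification $\inf_\Omega v=0$, which hinges on the boundary continuity of $v$.
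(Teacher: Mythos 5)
Your proof is correct. The paper's own derivation of this corollary is a single line: it simply applies Theorem \ref{T7} with $\Omega'=\Omega$ and uses that $\inf_\Omega u=0$ for a function with zero boundary trace. That one-liner silently ignores the fact that Theorem \ref{T7} is stated for \emph{nonnegative} solutions while the corollary makes no sign assumption on $u$ (indeed $f$, and hence $u$, may change sign). Your extra step --- introducing the auxiliary nonnegative solution $v$ of $-\Delta_N v=|f|$ with zero boundary data, proving $|u|\leq v$ by the weak comparison principle (testing against $(u-v)^+$ and $(-u-v)^+$ and using the strict monotonicity of $X\mapsto|X|^{N-2}X$), and then applying Theorem \ref{T7} to $v$ --- is exactly the missing reduction, so your argument is if anything more complete than the paper's. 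The identification $\inf_\Omega v=0$ that you flag can also be settled without boundary continuity: if $v\geq m>0$ a.e.\ then the constant $m$ would lie below a $W^{1,N}_0(\Omega)$ function everywhere, contradicting the zero trace (e.g.\ via the Poincar\'e inequality applied to $\min(v,m)$). Both routes land on the same quantitative conclusion.
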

\noindent Thanks to Theorem \ref{T5}, by the estimates in \cite{20,34,46} we now have that
\begin{thm}\label{T3}
Let $u\in W_{loc}^{1,N}(\Omega)$ be a weak solution of \eqref{0857}. Assume that $f \in L^\infty(\Omega\cap B_{2R})$, and $u \in W^{1,N}(\Omega \cap B_{2R})$ satisfies $u=0$ on $\partial \Omega \cap B_{2R}$. Then, there holds $\|u\|_{C^{1,\alpha}(\Omega\cap B_{R})}\leq C=C=C(N,a,R,\|f\|_{\infty,\Omega \cap B_{2R}},\|u\|_{L^N(\Omega\cap B_{2R})})$, for some $\alpha \in (0,1)$.
\end{thm}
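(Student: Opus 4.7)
\medskip \noindent \emph{Proof plan for Theorem \ref{T3}.} My plan is to reduce the $C^{1,\alpha}$-regularity statement to the combination of the $L^\infty$ bound furnished by Theorem \ref{T5} with the by-now classical local $C^{1,\alpha}$-theory for quasilinear equations of $p$-Laplacian type. As a first step, since $f \in L^\infty(\Omega\cap B_{2R}) \subset L^{N/(N-\epsilon)}(\Omega\cap B_{2R})$ for any $\epsilon\in(0,1]$, Theorem \ref{T5} (with $c=0$) produces a constant $M$, depending only on $N$, $R$, $\|f\|_{\infty,\Omega\cap B_{2R}}$ and $\|u\|_{L^N(\Omega\cap B_{2R})}$, such that $\|u\|_{L^\infty(\Omega\cap B_{3R/2})}\leq M$. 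This upgrade is essential since the subsequent $C^{1,\alpha}$-theory requires bounded solutions.

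\medskip \noindent I then treat interior and boundary points separately. For any $x_0\in \Omega\cap \overline{B_R}$ with $\mathrm{dist}(x_0,\partial\Omega)\geq\rho$ (for a fixed but suitable $\rho>0$), the interior $C^{1,\alpha}_{\mathrm{loc}}$-estimates of DiBenedetto \cite{20} and Tolksdorf \cite{46} applied on $B_{\rho/2}(x_0)\subset\Omega\cap B_{3R/2}$ yield a $C^{1,\alpha}$-bound on $B_{\rho/4}(x_0)$ depending only on $N$, $\rho$, $M$ and $\|f\|_\infty$. For $x_0\in\partial\Omega\cap \overline{B_R}$, the corresponding boundary $C^{1,\alpha}$-regularity theory of Lieberman \cite{34}---which covers precisely degenerate quasilinear equations with smooth boundary and vanishing Dirichlet data---produces a $C^{1,\alpha}$-bound on $\Omega\cap B_{r_0}(x_0)$ for some $r_0=r_0(\Omega)>0$, with constants depending on $M$ and $\|f\|_\infty$. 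Alternatively one may proceed exactly as in the preamble of Theorems \ref{T5}--\ref{T7}: straighten $\partial\Omega$ via a smooth local diffeomorphism $\psi$, transform the equation on $B^{+}$ into $-\mathrm{div}\,\mathcal{A}(y,\nabla v)=g$ with $g\in L^\infty$, extend by odd reflection across $\{y_N=0\}$, and invoke \cite{20,46} on the reflected problem, whose structure tensor still satisfies \eqref{E11qq} and depends on $y$ with Lipschitz regularity---enough to trigger the interior $C^{1,\alpha}$-theory.

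\medskip \noindent A standard finite covering of the compact set $\Omega\cap\overline{B_R}$ by balls of the interior type (from Step~2) and the boundary type (from Step~3), together with the uniformity of the resulting constants, yields the required global estimate $\|u\|_{C^{1,\alpha}(\Omega\cap B_R)}\leq C$ with $C$ of the form claimed. The main obstacle is to ensure the boundary piece is handled by an estimate that is genuinely quantitative in $\|f\|_\infty$ and $\|u\|_\infty$ and independent of any oscillation of $u$ on $\partial\Omega$ (which here is zero by hypothesis); either invoking \cite{34} directly, or checking that Schwarz reflection preserves the structural assumptions \eqref{E11qq} and the Lipschitz $y$-regularity of $\mathcal{A}$ needed by \cite{20,46}, resolves this delicate point.
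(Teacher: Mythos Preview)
Your proposal is correct and follows exactly the paper's own route: the paper's justification of Theorem~\ref{T3} is the single sentence ``Thanks to Theorem~\ref{T5}, by the estimates in \cite{20,34,46} we now have that\ldots'', i.e.\ first upgrade to $L^\infty$ via Theorem~\ref{T5} and then invoke the interior $C^{1,\alpha}$-theory of DiBenedetto/Tolksdorf together with Lieberman's boundary estimates. One small caveat: as stated, Theorem~\ref{T5} carries the hypothesis $u\geq c$ in $\Omega\cap B_{2R}$, which is not assumed in Theorem~\ref{T3}; your alternative of straightening the boundary, performing the odd reflection across $\{y_N=0\}$ and applying \cite{20,46} directly (or simply invoking \cite{34}) is the clean way to bypass this sign restriction.
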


\medskip \noindent We will now consider \eqref{0857} with a Dirac measure $\delta_{p_0}$ as R.H.S. In our situation, the fundamental solution $\Gamma$ takes the form
$$\Gamma(|x|)= (N\omega_{N})^{-\frac{1}{N-1}} \log \frac{1}{|x|}.$$
In a very general framework, Serrin has described in \cite{41} the behavior of solutions near a singularity. In particular, every $N$-harmonic and continuous function $u$ in $\Omega \setminus \{0\}$, which is bounded from below in $\Omega$, has either a removable singularity at $0$ or there holds
\begin{equation}\label{E6}
\frac{1}{C}\Gamma\leq u\leq C\Gamma
\end{equation}
in a neighborhood of $0$, for some $C\geq 1$. For the $p-$Laplace operator Kichenassamy and Veron \cite{27} have later improved \eqref{E6} by expressing $u$ in terms of $\Gamma$. A combination of \cite{27,41} leads in our situation to:
\begin{thm}\label{T2}
Let $u$ be a $N$-harmonic continuous function in $\Omega-\{0\}$, which is bounded from below in $\Omega$. Then there exists $\gamma \in \mathbb{R}$ such that
$$u-\gamma\Gamma\in L_{loc}^{\infty}(\Omega)$$
and $u$ is a distributional solution in $\Omega$ of 
$$-\Delta_N u=\gamma|\gamma|^{N-2}\delta_{0}$$
with $|\nabla u|^{N-1}\in L_{loc}^{1}(\Omega)$. Moreover, for $\gamma\neq0$ there holds
$$\lim_{x\rightarrow0}|x|^{|\alpha|}D^{|\alpha|}(u-\gamma\Gamma)(x)=0$$
for all multi-indices $\alpha=(\alpha_{1},...,\alpha_{N})$ with length $|\alpha|=\alpha_{1}+...+\alpha_{N}\geq1$.
\end{thm}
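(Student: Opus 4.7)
\medskip\noindent The plan is to decouple the result into two regimes separated by Serrin's classification of isolated singularities for quasilinear equations \cite{41}. First I would reduce to the case $u\geq0$ by adding a constant (which leaves every conclusion untouched, since $\Delta_N$ annihilates constants and the assertion $u-\gamma\Gamma\in L^\infty_{loc}$ is insensitive to additive constants). Serrin's dichotomy then yields two alternatives: either the singularity at $0$ is removable, in which case $u$ extends to a continuous $N$-harmonic function on all of $\Omega$, or the two-sided bound $C^{-1}\Gamma\leq u\leq C\Gamma$ stated in \eqref{E6} holds near $0$. In the removable case, interior regularity for the $N$-Laplacian (Theorem \ref{T3}) gives $u\in C^{1,\alpha}_{loc}(\Omega)$ and the whole statement is trivially true with $\gamma=0$. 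The real work concerns the second alternative.

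\medskip\noindent In the singular regime I would invoke the refined asymptotic expansion of Kichenassamy--Veron \cite{27}: starting from the two-sided bound $u\asymp\Gamma$ near $0$, their linearization of $\Delta_N$ around the radial profile produces a constant $\gamma\neq0$ such that $H:=u-\gamma\Gamma$ is locally bounded, and moreover $|x|^{|\alpha|}D^\alpha H(x)\to 0$ as $x\to 0$ for every multi-index $\alpha$ with $|\alpha|\geq 1$. This is precisely the last claim of the theorem, and it is the core analytic input that I would quote rather than reprove. Note that the hypothesis that $u$ is bounded from below forces $\gamma\geq 0$, since otherwise $\gamma\Gamma\to -\infty$ at $0$ would contradict the lower bound; the statement as written is kept sign-neutral to absorb also the analogous result for functions bounded from above (with $\gamma\leq 0$).

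\medskip\noindent With the expansion in hand I would derive the distributional equation $-\Delta_N u=\gamma|\gamma|^{N-2}\delta_0$ by a direct flux computation. Using the expansion, one has
$$\nabla u=-\gamma\,(N\omega_N)^{-\frac{1}{N-1}}\frac{x}{|x|^2}+o(|x|^{-1}),\qquad |\nabla u|^{N-2}\nabla u=-\frac{\gamma|\gamma|^{N-2}}{N\omega_N}\,\frac{x}{|x|^N}+o(|x|^{1-N})$$
near $0$, which immediately gives $|\nabla u|^{N-1}\in L^1_{loc}(\Omega)$ since $|x|^{-(N-1)}\in L^1_{loc}(\mathbb{R}^N)$. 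For $\varphi\in C_c^\infty(\Omega)$, the $L^1_{loc}$ integrability justifies writing
$$\int_\Omega|\nabla u|^{N-2}\nabla u\cdot\nabla\varphi=\lim_{\epsilon\to0}\int_{\Omega\setminus B_\epsilon}|\nabla u|^{N-2}\nabla u\cdot\nabla\varphi.$$
Integration by parts on $\Omega\setminus B_\epsilon$, where $-\Delta_N u=0$ pointwise, leaves only the flux through $\partial B_\epsilon$ (with inward unit normal $-x/|x|$); feeding the expansion into this flux and using $|\partial B_\epsilon|=N\omega_N\epsilon^{N-1}$, the limit is exactly $\gamma|\gamma|^{N-2}\varphi(0)$.

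\medskip\noindent The hard part of the argument is the Kichenassamy--Veron expansion, for which I rely entirely on \cite{27}: its proof requires carefully setting up weighted Hölder estimates for the linearized operator around $\gamma\Gamma$ and bootstrapping the decay of the remainder $H$ up to every derivative order. Everything else is bookkeeping: Serrin's dichotomy handles the removable case; the reflection/regularity results of Theorem \ref{T3} furnish the required smoothness on $\Omega\setminus\{0\}$ to justify the partial integration; and the boundary-flux computation on $\partial B_\epsilon$ is a one-line consequence of the explicit form of $\Gamma$. Assembling these pieces gives all three assertions of the theorem.
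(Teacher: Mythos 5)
Your proposal is correct and follows essentially the same route as the paper, which obtains Theorem \ref{T2} precisely by combining Serrin's removability/two-sided-bound dichotomy \eqref{E6} from \cite{41} with the Kichenassamy--Veron expansion $u=\gamma\Gamma+H$ from \cite{27}. The flux computation on $\partial B_\epsilon$ identifying the mass $\gamma|\gamma|^{N-2}$ is the standard bookkeeping the paper leaves implicit, and your version of it is accurate.
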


\bibliographystyle{plain}

\end{document}